\documentclass[11pt,a4paper]{amsart}

\usepackage[T1]{fontenc}
\usepackage[utf8]{inputenc}
\usepackage{lmodern}
\usepackage{microtype}
\usepackage[a4paper,margin=2.65cm]{geometry}
\usepackage{amsmath,amssymb,amsthm,mathtools}
\usepackage{enumitem}
\usepackage{needspace}
\usepackage{xcolor}
\usepackage{cite}
\usepackage{hyperref}

\hypersetup{
  colorlinks=true,
  linkcolor=blue!55!black,
  citecolor=blue!55!black,
  urlcolor=blue!55!black,
  pdftitle={Surface-to-volume homogenization of nonlinear Steklov eigencouples with nonhomogeneous Orlicz growth in perforated domains},
  pdfauthor={Hermann Douanla and Hubert Nnang and Joel Fotso Tachago and Jean Louis Woukeng}
}

\allowdisplaybreaks
\setlist[enumerate]{leftmargin=2.2em,itemsep=0.25em,topsep=0.35em}
\setlist[itemize]{leftmargin=2.0em,itemsep=0.25em,topsep=0.35em}
\numberwithin{equation}{section}

\newtheorem{maintheorem}{Theorem}

\newtheorem{theorem}{Theorem}[section]
\newtheorem{proposition}[theorem]{Proposition}
\newtheorem{lemma}[theorem]{Lemma}

\theoremstyle{definition}
\newtheorem{definition}[theorem]{Definition}
\newtheorem{remark}[theorem]{Remark}

\title[Surface-to-volume Steklov homogenization]
{Surface-to-volume homogenization of nonlinear Steklov eigencouples with nonhomogeneous Orlicz growth in perforated domains}

\author[H. Douanla]{Hermann Douanla}
\address{Department of Mathematics, Faculty of Science, University of Yaound\'e I, P.O. Box 812, Yaound\'e, Cameroon}
\email{hermann.douanla@facsciences-uy1.cm}

\author[H. Nnang]{Hubert Nnang}
\address{Department of Mathematics, Higher Teachers Training College, University of Yaound\'e I, P.O. Box 47, Yaound\'e, Cameroon}
\email{hnnang@yahoo.fr}

\author[J. Fotso Tachago]{Joel Fotso Tachago}
\address{Department of Mathematics, Higher Teachers Training College, University of Bamenda, P.O. Box 39, Bambili, Cameroon}
\email{fotsotachago@yahoo.fr}

\author[J. L. Woukeng]{Jean Louis Woukeng}
\address{Department of Mathematics and Computer Science, University of Dschang, P.O. Box 67, Dschang, Cameroon}
\email{jwoukeng@yahoo.fr}

\subjclass[2020]{35B27, 35J66, 35P30, 46E30, 49R05}
\keywords{Orlicz--Sobolev space, nonlinear Steklov eigencouple, periodic perforation, two-scale convergence, surface-to-volume homogenization, Lagrange multiplier}

\begin{document}

\begin{abstract}
We study normalized principal Steklov eigencouples on periodically perforated domains when both the bulk energy and the boundary normalization have general, possibly nonhomogeneous, Orlicz growth. The surface term on the hole boundaries carries the critical surface-to-volume scaling. For each fixed mass level, the constrained minima converge along the whole family, while selected minimizers, fluxes and Lagrange multipliers converge along subsequences to the corresponding objects of a homogenized problem. In the limit, the bulk operator is given by a convex cell problem, and the boundary term becomes a volume term weighted by the total surface density of the reference hole. The key tool is a strong approximation of the microscopic traces by cell averages, measured in the scaled surface modular.
\end{abstract}

\maketitle

\section{Introduction}
\label{sec:introduction}

We investigate nonlinear Steklov eigencouples in periodically perforated
domains. The bulk energy and the surface normalization satisfy general Orlicz
growth conditions, encompassing both power-type and non-power-type laws. The spectral parameter is
supported on the hole boundaries, with critical surface-to-volume scaling.
For a prescribed modular mass, the Lagrange multiplier is in general
different from the constrained energy divided by the mass.

\subsection{Model and motivation}
\label{subsec:motivation}

Let \(N\geq2\) and \(Y=(0,1)^{N}\). The geometry satisfies the following assumption.

\medskip
\noindent
\((H_0)\)\quad
The set \(\Omega\subset\mathbb{R}^{N}\) is a bounded Lipschitz domain, and
\(T\Subset Y\) is a nonempty open reference hole with boundary of
class \(C^{1,1}\), such that \(Y^{*}:=Y\setminus\overline{T}\) is connected.

\medskip
\noindent
We denote by \(\nu\) the outward unit normal to \(Y^{*}\) on \(\partial T\),
which points into the hole. For \(0<\varepsilon<1\), define the set of complete cells

\[
K_{\varepsilon}
=
\left\{
k\in\mathbb{Z}^{N}:
\varepsilon(k+\overline{Y})\subset\Omega
\right\},
\]
the union of holes, the perforated domain and the union of internal boundaries by
\begin{align}
T_{\varepsilon}
&=
\bigcup_{k\in K_{\varepsilon}}\varepsilon(k+T),
\label{eq:intro-holes}
\\
\Omega_{\varepsilon}
&=
\Omega\setminus\overline{T_{\varepsilon}},
\label{eq:intro-domain}
\\
\Gamma_{\varepsilon}
&=
\bigcup_{k\in K_{\varepsilon}}\varepsilon(k+\partial T).
\label{eq:intro-surface}
\end{align}

By construction, only complete cells are perforated; no hole intersects
\(\partial\Omega\), and the remaining boundary layer is unperforated.

Let \(\Phi\) be an \(N\)-function with derivative \(\phi=\Phi'\), \(F(y,\xi)\) a periodic convex bulk energy with flux \(A=D_{\xi}F\), and \(\rho\) a periodic surface density. The outward unit normal  to \(\Omega_{\varepsilon}\)
on \(\Gamma_{\varepsilon}\) is denoted by \(\nu_{\varepsilon}\) and is the rescaling of \(\nu\). For a prescribed surface modular mass \(\alpha>0\), the normalized nonlinear Steklov problem takes the formal form
\begin{equation}
\left\{
\begin{aligned}
-\operatorname{div}
A\left(\frac{x}{\varepsilon},\nabla u_{\varepsilon,\alpha}\right)
&=0
&&\text{in }\Omega_{\varepsilon},
\\
u_{\varepsilon,\alpha}&=0
&&\text{on }\partial\Omega,
\\
A\left(\frac{x}{\varepsilon},\nabla u_{\varepsilon,\alpha}\right)
\cdot\nu_{\varepsilon}
&=
\varepsilon\lambda_{1,\varepsilon}(\alpha)
\rho\left(\frac{x}{\varepsilon}\right)
\phi(\lvert u_{\varepsilon,\alpha}\rvert)
\operatorname{sgn}(u_{\varepsilon,\alpha})
&&\text{on }\Gamma_{\varepsilon}.
\end{aligned}
\right.
\label{eq:intro-strong}
\end{equation}
The associated bulk energy and surface modular are
\begin{align}
\mathcal{E}_{\varepsilon}(v)
&=
\int_{\Omega_{\varepsilon}}
F\left(\frac{x}{\varepsilon},\nabla v(x)\right)\,dx,
\label{eq:intro-energy}
\\
\mathcal{M}_{\varepsilon}(v)
&=
\varepsilon
\int_{\Gamma_{\varepsilon}}
\rho\left(\frac{x}{\varepsilon}\right)
\Phi\bigl(\lvert v(x)\rvert\bigr)\,d\sigma_{x},
\label{eq:intro-mass}
\end{align}
and the constrained minimum
\begin{equation}
\mu_{1,\varepsilon}(\alpha)
=
\inf
\left\{
\mathcal{E}_{\varepsilon}(v):
v\in X_{\varepsilon},\
\mathcal{M}_{\varepsilon}(v)=\alpha
\right\},
\label{eq:intro-minimum}
\end{equation}
where
\begin{equation}
X_{\varepsilon}
=
\left\{
v\in W^{1}L^{\Phi}(\Omega_{\varepsilon}):
\gamma_{0}v=0\ \text{on }\partial\Omega
\right\}
\label{eq:intro-space}
\end{equation}
and \(\gamma_{0}\) denotes the trace on \(\partial\Omega\); traces in integrals
over \(\Gamma_{\varepsilon}\) are understood without writing \(\gamma_0\).

For each sufficiently small \(\varepsilon\), choose a nonnegative minimizer
\(u_{\varepsilon,\alpha}\) and denote by \(\lambda_{1,\varepsilon}(\alpha)\)
its multiplier (see Remark~\ref{rem:multipliers}). The variational
formulation of \eqref{eq:intro-strong} reads as follows: for every \(v\in X_{\varepsilon}\),
\begin{equation}
\int_{\Omega_{\varepsilon}}
\sum_{i=1}^{N}
A_{i}\left(\frac{x}{\varepsilon},\nabla u_{\varepsilon,\alpha}\right)
D_{x_{i}}v\,dx
=
\varepsilon\lambda_{1,\varepsilon}(\alpha)
\int_{\Gamma_{\varepsilon}}
\rho\left(\frac{x}{\varepsilon}\right)
\phi\bigl(\lvert u_{\varepsilon,\alpha}\rvert\bigr)
\operatorname{sgn}(u_{\varepsilon,\alpha})
v\,d\sigma_{x}.
\label{eq:intro-weak}
\end{equation}

The model can be interpreted as diffusion through a porous matrix with
nonlinear adsorption on its internal walls: the surface constraint fixes a
prescribed adsorbed mass, while the minimum measures the corresponding bulk
energy \cite{HornungJager1991}. Boundary-supported spectral
terms also occur in membrane and fluid problems \cite{GirouardPolterovich2017}.
Non-power constitutive laws arise, for example, in nonlinear conduction and
non-Newtonian flow \cite{GwiazdaSwierczewskaWroblewska2010,Zhikov1987};
related models include Carreau-type flows in porous media
\cite{AnguianoBonnivardSuarezGrau2025}.
Since there are of order \(\varepsilon^{-N}\) holes, each with surface area
of order \(\varepsilon^{N-1}\), the total internal surface area is of order
\(\varepsilon^{-1}\). The factor \(\varepsilon\) makes the surface
normalization contribute at order one
\cite{AllaireDamlamianHornung1996,CioranescuDonato1988}.

For the chosen normalized minimizer, testing \eqref{eq:intro-weak} by \(u_{\varepsilon,\alpha}\) gives
\begin{equation}
\lambda_{1,\varepsilon}(\alpha)
=\frac{\displaystyle\int_{\Omega_{\varepsilon}}
 A\!\left(\frac{x}{\varepsilon},\nabla u_{\varepsilon,\alpha}\right)\cdot\nabla u_{\varepsilon,\alpha}\,dx}
 {\displaystyle\varepsilon\int_{\Gamma_{\varepsilon}}\rho\!\left(\frac{x}{\varepsilon}\right)
 \phi(|u_{\varepsilon,\alpha}|)|u_{\varepsilon,\alpha}|\,d\sigma_x}.
\label{eq:intro-multiplier-ratio}
\end{equation}
The denominator is positive under the fixed mass constraint. For a general \(N\)-function, the ratio differs from \(\mu_{1,\varepsilon}(\alpha)/\alpha\); the selected minimizer, its energy and its multiplier must be followed together.

\subsection{Related work}
\label{subsec:literature}

Spectral homogenization goes back to Kesavan \cite{Kesavan1979} for
oscillating coefficients and to Vanninathan \cite{Vanninathan1981} for
perforated domains, where the Dirichlet, Neumann and Steklov regimes are
distinguished by the scaling of the eigenvalue; error estimates for the Steklov
problem are given in \cite{Pastukhova1995}.  The
\(\varepsilon\)-scaled surface term is the one of the nonhomogeneous Neumann
problem in perforated domains studied by Cioranescu and Donato
\cite{CioranescuDonato1988}, in which surface data on the holes produce a
volume source in the limit.  Linear Steklov eigencouples in this regime were
homogenized by two-scale convergence, gradients included, in
\cite{Douanla2010}, and sign-changing surface densities lead to different
spectral regimes according to their cell average
\cite{ChiadoPiatPiatnitski2012,Douanla2013}.

The results closest to ours are linear.  Girouard, Henrot and Lagac\'e
\cite{GirouardHenrotLagace2021} homogenize the Steklov problem on a
periodically perforated domain in the regime where the surface of each hole
is comparable to the volume of its cell; the hole boundaries then contribute
a constant volume density to the limit problem, which they use to relate
Steklov and Neumann eigenvalues.  For this regime, Khrabustovskyi and
Taskinen \cite{KhrabustovskyiTaskinen2026} prove norm-resolvent convergence
and quantitative estimates for the distance between the spectra.
Theorem~\ref{thm:intro-main} extends this surface-to-volume mechanism to
nonlinear energies with nonhomogeneous Orlicz growth: for
\(F(y,\xi)=\lvert\xi\rvert^{2}/2\), \(\Phi(t)=t^{2}/2\) and \(\rho\equiv1\),
the limit problem is a linear Dirichlet eigenvalue problem for the
homogenized operator of the perforated medium, with the constant volume
weight \(\lvert\partial T\rvert\), and the multiplier equals the minimum
divided by \(\alpha\).  Boundary homogenization is also used to
prescribe Steklov spectra on manifolds \cite{GirouardLagace2021}, and
convergence rates for rapidly oscillating weights on the boundary of a fixed
domain are obtained in \cite{SalortSteklov2022}.

Our analysis uses two-scale convergence \cite{Allaire1992,Nguetseng1989}, its
periodic-surface version
\cite{AllaireDamlamianHornung1996,NeussRadu1996}, and uniform
extension operators \cite{AcerbiChiadoPiatDalMasoPercivale1992,CioranescuDonato1999}.
The Orlicz formulation of two-scale convergence
\cite{TachagoGargiuloNnangZappale2021,TachagoNnang2012,TachagoNnangTchindaZappale2024,TachagoNnangZappale2021}
provides the volume compactness.  The passage to the nonlinear surface terms
rests on a comparison of each trace with the average of the function over its
cell.  In Sobolev spaces, this comparison is closely related to the boundary
unfolding operator of the periodic unfolding method in perforated domains
\cite{CioranescuDonatoZaki2006}.  We establish it in the scaled surface
modular of an \(N\)-function, and combine it with a Nemytskii lemma on the
varying surface measures to pass to the limit in the boundary modular and in
its Euler derivative.  The Orlicz tools follow
\cite{DonaldsonTrudinger1971,Gossez1982,KrasnoselskiiRutickii1961,RaoRen1991},
with trace embeddings and compactness from \cite{CavaliereMihula2019,Cianchi2010}.

Baffico, Conca and Rajesh \cite{BafficoConcaRajesh2006} homogenize first
eigenvalues of monotone operators associated with even, positively
homogeneous convex energies; homogeneity makes the multiplier proportional to
the minimum.  Normalized eigenvalue problems in Orlicz--Sobolev spaces, where
the multiplier depends on the normalization level, are studied in
\cite{GossezManasevich2002,Salort2022}, homogeneous formulations based on the
Luxemburg norm in \cite{FernandezBonderSalortVivas2024}, and a constrained
optimization problem with an Orlicz boundary term in
\cite{DaSilvaSalortSilvaSpedaletti2019}.  The present paper follows the
minimum, the selected minimizer and its multiplier together, through the
limit of a nonlinear surface constraint and of its derivative.

\subsection{Standing hypotheses}
\label{subsec:hypotheses}

In addition to \((H_0)\), we impose the following standing assumptions on the Orlicz growth, the bulk energy, and the surface density.

\medskip
\noindent
\((H_1)\)\quad
The function \(\Phi:[0,\infty)\to[0,\infty)\) is an \(N\)-function of the form
\begin{equation}
\Phi(t)=\int_{0}^{t}\phi(s)\,ds,
\label{eq:Phi-representation}
\end{equation}
where
\[
\phi\in C([0,\infty))\cap C^{1}((0,\infty)),
\qquad
\phi(0)=0,
\]
and \(\phi\) is strictly increasing on \((0,\infty)\).  Its complementary \(N\)-function is
\begin{equation}
\widetilde{\Phi}(t)
=
\sup_{s\geq0}\{st-\Phi(s)\},
\qquad t\geq0.
\label{eq:Phi-complement}
\end{equation}
There are constants \(1<p^{-}\leq p^{+}<\infty\) and \(\kappa^{+}\geq1\) such that, for every \(t>0\),
\begin{align}
p^{-}
&\leq
\frac{t\phi(t)}{\Phi(t)}
\leq p^{+},
\label{eq:Phi-indices}
\\
\frac{t\phi'(t)}{\phi(t)}
&\leq \kappa^{+}-1.
\label{eq:phi-indices}
\end{align}
Index conditions of this type are standard in the Orlicz setting; see \cite{Lieberman1991}. The bounds in \eqref{eq:Phi-indices} imply the global \(\Delta_2\)-conditions for both \(\Phi\) and \(\widetilde\Phi\): integrating the logarithmic derivative of \(\Phi\) gives its doubling property, while \(p^->1\) yields the complementary doubling property via convex duality. Condition \eqref{eq:phi-indices} is used only for the nonlinear surface quantity \(t\phi(t)\) in Lemma~\ref{lem:nonlinear-modular-continuity}; it is satisfied by all benchmark models in Subsection~\ref{subsec:benchmarks}.

\medskip
\noindent
\((H_2)\)\quad
The function
\[
F:\mathbb{R}^{N}\times\mathbb{R}^{N}\longrightarrow[0,\infty)
\]
is \(Y\)-periodic in its first variable, and its restriction to \(\overline{Y}\times\mathbb{R}^{N}\) is continuous.  For every \(y\), the function \(F(y,\cdot)\) is strictly convex and of class \(C^{1}\); we write
\[
A(y,\xi)=D_{\xi}F(y,\xi).
\]  There are constants \(0<c_{0}\leq c_{1}\) such that, for every \(y\in\mathbb{R}^{N}\) and every \(\xi\in\mathbb{R}^{N}\),
\begin{align}
F(y,-\xi)&=F(y,\xi),
\label{eq:F-even}
\\
c_{0}\Phi(\lvert\xi\rvert)
&\leq
F(y,\xi)
\leq
c_{1}\Phi(\lvert\xi\rvert).
\label{eq:F-growth}
\end{align}
In particular \(F(y,0)=0\).  The following properties of the flux are consequences of these assumptions: for every \(y,\xi,\eta\in\mathbb{R}^{N}\),
\begin{align}
\widetilde{\Phi}
\left(
\frac{\lvert A(y,\xi)\rvert}{c_{2}}
\right)
&\leq
\Phi(\lvert\xi\rvert),
\label{eq:A-growth}
\\
a_{0}\Phi(\lvert\xi\rvert)
&\leq
\sum_{i=1}^{N}A_{i}(y,\xi)\xi_{i}
\leq
a_{1}\Phi(\lvert\xi\rvert),
\label{eq:A-pairing}
\\
\sum_{i=1}^{N}
\bigl(A_{i}(y,\xi)-A_{i}(y,\eta)\bigr)
(\xi_{i}-\eta_{i})
&>0
\quad\text{whenever }\xi\neq\eta.
\label{eq:A-strict}
\end{align}
Here \(c_{2}=2^{p^{+}-1}c_{1}\), \(a_{0}=c_{0}\), and
\(a_{1}=2^{p^{+}}c_{1}\), and the restriction of \(A\) to \(\overline{Y}\times\mathbb{R}^{N}\) is continuous; these properties are derived in Subsection~\ref{subsec:orlicz}.  The evenness \eqref{eq:F-even} is used only to select nonnegative minimizers.

\medskip
\noindent
\((H_3)\)\quad
The function \(\rho\in L^{\infty}(\partial T)\) is extended periodically to
\(\bigcup_{k\in\mathbb{Z}^{N}}(k+\partial T)\), and
\begin{equation}
0
\leq
\rho(y)
\leq
\rho_{+}
\quad\text{for \(d\sigma_{y}\)-almost every }y\in\partial T,
\qquad
\beta_{\rho}
=
\int_{\partial T}\rho(y)\,d\sigma_{y}>0.
\label{eq:rho-positive}
\end{equation}

\medskip
\noindent
The normalization level \(\alpha>0\) is fixed independently of
\(\varepsilon\). We use \(\lambda_{1,0}(\alpha)\) for the multiplier
of the selected limit minimizer \(u_{0,\alpha}\) (Remark~\ref{rem:multipliers}).

\subsection{Main results and new ingredients}
\label{subsec:main-results}

The main contributions are the nonlinear surface compactness result,
which controls both the boundary modular and its Euler derivative by
cell averages; identification of the effective nonlinear cell law; and
convergence of constrained minima, selected minimizers, fluxes, and
associated Lagrange multipliers. Since the energy is nonhomogeneous, the multiplier is tracked separately
from the minimum.

Under \((H_0)\) and
\((H_1)\)--\((H_3)\), nonnegative normalized minimizers exist at both the
microscopic and homogenized levels (Theorem~\ref{thm:micro-existence} and
Proposition~\ref{prop:homogenized-existence}). For each selected minimizer,
its scalar Lagrange multiplier is uniquely determined.

For \(\xi\in\mathbb{R}^{N}\), define
\begin{equation}
F_{\mathrm{hom}}(\xi)
=
\inf_{w\in W_{\#}^{1}L^{\Phi}(Y^{*})}
\int_{Y^{*}}
F\bigl(y,\xi+\nabla_{y}w(y)\bigr)\,dy.
\label{eq:intro-Fhom}
\end{equation}
The unique cell minimizer is denoted by \(w_{\xi}\), and
\begin{equation}
A_{\mathrm{hom},i}(\xi)
=
\int_{Y^{*}}
A_{i}\bigl(y,\xi+\nabla_{y}w_{\xi}(y)\bigr)\,dy,
\qquad i=1,\ldots,N.
\label{eq:intro-Ahom}
\end{equation}
For \(v\in W_{0}^{1}L^{\Phi}(\Omega)\), put
\begin{align}
\mathcal{E}_{0}(v)
&=
\int_{\Omega}F_{\mathrm{hom}}(\nabla v)\,dx,
\label{eq:intro-E0}
\\
\mathcal{M}_{0}(v)
&=
\beta_{\rho}\int_{\Omega}\Phi(\lvert v\rvert)\,dx,
\label{eq:intro-M0}
\\
\mu_{1,0}(\alpha)
&=
\inf
\left\{
\mathcal{E}_{0}(v):
v\in W_{0}^{1}L^{\Phi}(\Omega),\
\mathcal{M}_{0}(v)=\alpha
\right\}.
\label{eq:intro-mu0}
\end{align}

\begin{maintheorem}
\label{thm:intro-main}
Assume \((H_0)\)--\((H_3)\), and fix \(\alpha>0\). For every sufficiently small \(\varepsilon\), let
\[
\bigl(
\lambda_{1,\varepsilon}(\alpha),
u_{\varepsilon,\alpha}
\bigr)
\]
be an arbitrary selection of nonnegative normalized principal eigencouples, with the multiplier attached to the selected minimizer. Then
\begin{equation}
\mu_{1,\varepsilon}(\alpha)
\longrightarrow
\mu_{1,0}(\alpha).
\label{eq:intro-mu-convergence}
\end{equation}
From every sequence \(\varepsilon\to0\), one can extract a subsequence, not relabelled, and find a nonnegative minimizer
\[
u_{0,\alpha}\in W_{0}^{1}L^{\Phi}(\Omega)
\]
of \eqref{eq:intro-mu0}, a function
\[
u_{1,\alpha}
\in
L^{1}\left(
\Omega;
W_{\#}^{1}L^{\Phi}(Y^{*})
\right),
\qquad
\nabla_{y}u_{1,\alpha}
\in
L^{\Phi}(\Omega\times Y^{*})^{N},
\]
and a number \(\lambda_{1,0}(\alpha)>0\), associated with this limit minimizer, such that
\begin{align}
P_{\varepsilon}u_{\varepsilon,\alpha}
&\rightharpoonup
u_{0,\alpha}
&&\text{in }W_{0}^{1}L^{\Phi}(\Omega),
\label{eq:intro-weak-u}
\\
P_{\varepsilon}u_{\varepsilon,\alpha}
&\longrightarrow
u_{0,\alpha}
&&\text{in }L^{\Phi}(\Omega),
\label{eq:intro-strong-u}
\\
\chi_{\varepsilon}\nabla u_{\varepsilon,\alpha}
&\overset{2s}{\rightharpoonup}
\chi_{Y^{*}}
\bigl(\nabla u_{0,\alpha}+\nabla_{y}u_{1,\alpha}\bigr)
&&\text{in the weak Orlicz two-scale sense},
\label{eq:intro-gradient-convergence}
\\
\chi_{\varepsilon}
A\left(
\frac{x}{\varepsilon},\nabla u_{\varepsilon,\alpha}
\right)
&\overset{2s}{\rightharpoonup}
\chi_{Y^{*}}
A\bigl(y,\nabla u_{0,\alpha}+\nabla_y u_{1,\alpha}\bigr)
&&\text{in the weak dual Orlicz two-scale sense},
\label{eq:intro-flux-convergence}
\\
\lambda_{1,\varepsilon}(\alpha)
&\longrightarrow
\lambda_{1,0}(\alpha).
\label{eq:intro-lambda-convergence}
\end{align}
Here \(P_{\varepsilon}\) is the uniform extension operator and
\(\chi_{\varepsilon}\) is the characteristic function of
\(\Omega_{\varepsilon}\) in \(\Omega\).  Moreover,
\begin{equation}
u_{1,\alpha}(x,y)
=
w_{\nabla u_{0,\alpha}(x)}(y)
\quad\text{for almost every }(x,y)\in\Omega\times Y^{*},
\label{eq:intro-cell-corrector}
\end{equation}
and
\begin{equation}
\beta_{\rho}
\int_{\Omega}\Phi(\lvert u_{0,\alpha}\rvert)\,dx
=
\alpha.
\label{eq:intro-limit-normalization}
\end{equation}
The limit eigencouple satisfies, for every \(v\in W_{0}^{1}L^{\Phi}(\Omega)\),
\begin{equation}
\int_{\Omega}
A_{\mathrm{hom}}\bigl(\nabla u_{0,\alpha}\bigr)\cdot
\nabla v\,dx
=
\lambda_{1,0}(\alpha)\beta_{\rho}
\int_{\Omega}
\phi(\lvert u_{0,\alpha}\rvert)
\operatorname{sgn}(u_{0,\alpha})v\,dx.
\label{eq:intro-limit-equation}
\end{equation}
\end{maintheorem}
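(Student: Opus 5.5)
The argument has three stages: a priori bounds with compactness, identification of the limit energy and constraint through a $\Gamma$-convergence-type argument at fixed mass, and passage to the limit in the Euler--Lagrange equation to recover the multiplier.

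\textbf{Step 1: a priori bounds and compactness.} First I bound $\mu_{1,\varepsilon}(\alpha)$ uniformly from above. Fix any $\psi\in C_c^\infty(\Omega)$ with $\psi\not\equiv0$, and rescale it to $t_\varepsilon\psi$ so that $\mathcal M_\varepsilon(t_\varepsilon\psi)=\alpha$. The surface two-scale limit of $\varepsilon\int_{\Gamma_\varepsilon}\rho(x/\varepsilon)\Phi(t|\psi|)\,d\sigma_x$ is $\beta_\rho\int_\Omega\Phi(t|\psi|)\,dx$. By continuity and monotonicity in $t$ and the $\Delta_2$ bounds \eqref{eq:Phi-indices}, $t_\varepsilon$ stays bounded. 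Hence $\mathcal E_\varepsilon(u_{\varepsilon,\alpha})\le C$, and by \eqref{eq:F-growth} the modular $\int_{\Omega_\varepsilon}\Phi(|\nabla u_{\varepsilon,\alpha}|)$ is bounded. The uniform extension operator then gives boundedness of $P_\varepsilon u_{\varepsilon,\alpha}$ in $W^1_0L^\Phi(\Omega)$. Reflexivity follows from $\Delta_2$ for $\Phi$ and $\widetilde\Phi$, and the compact embedding into $L^\Phi$ gives \eqref{eq:intro-weak-u}--\eqref{eq:intro-strong-u} along a subsequence. The Orlicz two-scale compactness in perforated domains gives \eqref{eq:intro-gradient-convergence} for some $u_{1,\alpha}$. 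By \eqref{eq:A-growth}, the fluxes are bounded in $L^{\widetilde\Phi}$, so they have a dual two-scale limit $\chi_{Y^*}A_0(x,y)$, still to be identified.

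\textbf{Step 2: the constraint and the minimum.} The key input is the surface compactness lemma: the scaled surface modular of $u_\varepsilon-(\text{cell average})$ tends to zero. Combined with strong $L^\Phi$ convergence and the Nemytskii lemma on varying surface measures, this gives
\[
\mathcal M_\varepsilon(u_{\varepsilon,\alpha})\to\beta_\rho\int_\Omega\Phi(|u_{0,\alpha}|)\,dx,
\]
which is \eqref{eq:intro-limit-normalization}. The same argument gives convergence of $\varepsilon\int_{\Gamma_\varepsilon}\rho\,\phi(|u_\varepsilon|)|u_\varepsilon|$ and of $\varepsilon\int_{\Gamma_\varepsilon}\rho\,\phi(u_\varepsilon)v$ for smooth $v$, using \eqref{eq:phi-indices}.

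\emph{Liminf.} Two-scale lower semicontinuity of convex integrands gives
\[
\liminf\mathcal E_\varepsilon\ge\iint_{\Omega\times Y^*}F(y,\nabla u_0+\nabla_yu_1)\ge\mathcal E_0(u_0)\ge\mu_{1,0}(\alpha).
\]

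\emph{Limsup.} Take a minimizer $v$ of \eqref{eq:intro-mu0}, which exists by Proposition~\ref{prop:homogenized-existence}, and approximate it in energy and mass by smooth $v_\delta$ with smooth approximate correctors $w_\delta(x,y)$. Use the recovery sequence $v_\delta+\varepsilon w_\delta(x,x/\varepsilon)$, renormalized by a scalar factor $s_\varepsilon\to1$. The scalar renormalization is controlled by the strict monotonicity in $t$ of the mass map and by the modular continuity of $\mathcal E_\varepsilon$ in the scaling. This proves \eqref{eq:intro-mu-convergence} along the subsequence, hence for the whole family since the limit is unique.

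Equality in the chain of inequalities forces $u_{0,\alpha}$ to be a minimizer. It also forces $\iint F(y,\nabla u_0+\nabla_y u_1)=\int F_{\rm hom}(\nabla u_0)$, and by strict convexity and uniqueness of the cell minimizer this gives \eqref{eq:intro-cell-corrector}. Nonnegativity passes to the limit.

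\textbf{Step 3: flux and multiplier.} From \eqref{eq:intro-multiplier-ratio}, the energy bound and the convergent positive denominator, $\lambda_{1,\varepsilon}$ is bounded. I extract $\lambda_{1,\varepsilon}\to\lambda_*$. Testing \eqref{eq:intro-weak} with $\psi(x)+\varepsilon\psi_1(x,x/\varepsilon)$ and passing to the limit shows that $A_0$ satisfies the two-scale limit system: the cell equation $\operatorname{div}_yA_0=0$ with zero flux on $\partial T$, and the macroscopic equation with right-hand side $\lambda_*\beta_\rho\phi(u_0)$.

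Identifying $A_0=A(y,\nabla u_0+\nabla_yu_1)$ is the main obstacle, since the boundary right-hand side is nonlinear. I would use Minty's trick via energy convergence. Testing with $u_\varepsilon$ and using Step 2 gives
\[
\int A_\varepsilon\cdot\nabla u_\varepsilon\to\lambda_*\beta_\rho\int\phi(u_0)u_0 .
\]
This equals $\iint A_0\cdot(\nabla u_0+\nabla_yu_1)$, obtained by testing the limit system with $u_0$ and $u_1$; the step requires a density argument in Orlicz spaces, justified by $\Delta_2$. Monotonicity \eqref{eq:A-strict} then gives $\iint(A_0-A(y,\Psi))\cdot(\nabla u_0+\nabla_yu_1-\Psi)\ge0$ for admissible two-scale tests $\Psi$, and hemicontinuity yields \eqref{eq:intro-flux-convergence}. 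Averaging over $Y^*$ with \eqref{eq:intro-Ahom} gives \eqref{eq:intro-limit-equation} with $\lambda_{1,0}=\lambda_*$, which is the multiplier of $u_{0,\alpha}$ because that multiplier is unique. It is positive because testing with $u_0$ gives $a_0\mathcal E$-type positivity, since $u_0\neq0$.
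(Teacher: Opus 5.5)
Your proposal is correct. Its skeleton matches the paper's: uniform bounds from a fixed normalized test function, cell-average approximation of the traces for the surface modular and its Euler derivative, convex liminf plus a normalized recovery, extraction of $\lambda_*$ and of an unidentified flux limit, convergence of the flux--gradient pairing from the Euler identity, and Minty.

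\textbf{Corrector identification.} This is the genuine difference. Set $G=\nabla u_{0,\alpha}+\nabla_y u_{1,\alpha}$. You read \eqref{eq:intro-cell-corrector} off the equality case of the chain
$\mu_{1,0}(\alpha)=\lim\mu_{1,\varepsilon}(\alpha)\geq\iint_{\Omega\times Y^*}F(y,G)\geq\mathcal E_0(u_{0,\alpha})\geq\mu_{1,0}(\alpha)$.
This forces $\int_{Y^*}F(y,G(x,\cdot))\,dy=F_{\mathrm{hom}}(\nabla u_{0,\alpha}(x))$ for a.e. $x$, and uniqueness of the cell minimizer then identifies $u_{1,\alpha}(x,\cdot)$. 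The paper instead identifies $u_{1,\alpha}$ only after Minty: it substitutes $q=A(y,G)$ into the limit cell equation and uses the Euler--Lagrange characterization in Proposition~\ref{prop:cell-minimizer}.

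What each route buys:
\begin{itemize}
\item Yours decouples the corrector from the flux and from the Euler equation. It works for any constrained sequence with $\mathcal E_\varepsilon(u_\varepsilon)-\mu_{1,\varepsilon}(\alpha)\to0$, so Minty is needed only for \eqref{eq:intro-flux-convergence} and the macroscopic equation.
\item The paper's is the monotone-operator route. It would survive where only the equation and monotonicity, not minimality, are available.
\end{itemize}

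\textbf{Limsup.} You only need $\limsup\mu_{1,\varepsilon}(\alpha)\leq\mu_{1,0}(\alpha)$. Iterated limits with fixed smooth $v_\delta$ and $x$-dependent smooth correctors therefore suffice, and the paper's diagonal construction of a single bounded recovery family is unnecessary. The price is that you need smooth two-scale approximations of $x\mapsto w_{\nabla v(x)}$. This requires measurability of $\xi\mapsto w_\xi$ into $W^1_{\#}L^\Phi(Y^*)$, which follows from uniqueness and a weak-compactness argument. The paper sidesteps this with piecewise-affine approximants.

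\textbf{Nonnegativity.} One small point needs filling. Nonnegativity of $u_{0,\alpha}$ does not follow directly from $P_\varepsilon u_{\varepsilon,\alpha}\to u_{0,\alpha}$, because the extension need not be nonnegative inside the holes. Use $\chi_\varepsilon(P_\varepsilon u_{\varepsilon,\alpha})^-=0$ together with the weak-$*$ convergence $\chi_\varepsilon\rightharpoonup|Y^*|$, as the paper does.
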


\begin{maintheorem}
\label{thm:intro-full}
Under the assumptions of Theorem~\ref{thm:intro-main}, if the nonnegative minimizer of \eqref{eq:intro-mu0} under the normalization \eqref{eq:intro-limit-normalization} is unique, then all convergences of Theorem~\ref{thm:intro-main} hold for the whole family, without extraction.
\end{maintheorem}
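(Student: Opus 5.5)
The argument is the standard subsequence-principle (Urysohn-type) argument, applied to Theorem~\ref{thm:intro-main}. Every convergence in Theorem~\ref{thm:intro-main} takes place in a topology where ``every subsequence has a further subsequence converging to the same limit'' implies convergence of the whole family. These topologies are:
\begin{itemize}
\item the weak topology of \(W_{0}^{1}L^{\Phi}(\Omega)\);
\item the norm of \(L^{\Phi}(\Omega)\);
\item the weak (dual) Orlicz two-scale convergences, each tested against a fixed admissible test function, so that they reduce to convergence of real sequences;
\item convergence of real numbers.
\end{itemize}
The convergence \(\mu_{1,\varepsilon}(\alpha)\to\mu_{1,0}(\alpha)\) is already stated for the whole family. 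It therefore suffices to show that the subsequential limits \(u_{0,\alpha}\), \(u_{1,\alpha}\) and \(\lambda_{1,0}(\alpha)\) do not depend on the extracted subsequence.

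Let \(\varepsilon_n\to0\) be arbitrary. By Theorem~\ref{thm:intro-main} there is a further subsequence along which all the convergences hold, with some nonnegative minimizer \(u_{0,\alpha}\) of \eqref{eq:intro-mu0} satisfying \eqref{eq:intro-limit-normalization}. By the uniqueness hypothesis, \(u_{0,\alpha}\) is the same function \(\bar u\) for every subsequence. Then \eqref{eq:intro-cell-corrector} forces \(u_{1,\alpha}(x,y)=w_{\nabla\bar u(x)}(y)\), which is again independent of the subsequence, since the cell minimizer \(w_\xi\) is unique by strict convexity. The two-scale limits in \eqref{eq:intro-gradient-convergence} and \eqref{eq:intro-flux-convergence} are explicit expressions in \(\bar u\) and \(u_{1,\alpha}\), so they are identified as well.

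The multiplier is the only point needing care, because \(\lambda_{1,0}(\alpha)\) is described as ``associated with this limit minimizer''. I would show that it is determined by \(\bar u\) by testing \eqref{eq:intro-limit-equation} with \(v=\bar u\):
\[
\lambda_{1,0}(\alpha)
=\frac{\int_{\Omega}A_{\mathrm{hom}}(\nabla\bar u)\cdot\nabla\bar u\,dx}{\beta_{\rho}\int_{\Omega}\phi(\lvert\bar u\rvert)\lvert\bar u\rvert\,dx}.
\]
The denominator is positive: by \eqref{eq:Phi-indices}, \(t\phi(t)\geq p^{-}\Phi(t)\), so it is at least \(p^{-}\alpha>0\) by \eqref{eq:intro-limit-normalization}. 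Hence \(\lambda_{1,0}(\alpha)\) is the same for every subsequence. One could alternatively invoke the uniqueness of the multiplier from Remark~\ref{rem:multipliers}, but the explicit formula avoids any dependence on how that remark is phrased.

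Finally, the argument by contradiction runs as follows. Suppose, say, \(\lambda_{1,\varepsilon}(\alpha)\not\to\lambda_{1,0}(\alpha)\) along the whole family. Then some sequence stays a distance \(\delta>0\) away, but it has a subsequence converging to the common limit, which is a contradiction. The same reasoning applies verbatim to each convergence: to weak convergence via the pairing against a fixed functional, to two-scale convergence via a fixed test function, and to the norm convergence in \(L^{\Phi}\). No real obstacle is expected. The only subtle step is checking that each two-scale convergence mode is defined through testing, so that the subsequence principle applies; this is immediate from its definition.
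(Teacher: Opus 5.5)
Your proposal is correct and follows the paper's own proof. The paper likewise notes that the minima converge without extraction, and that uniqueness of the nonnegative normalized limit minimizer fixes \(u_{0,\alpha}\), the corrector and the multiplier along every subsequence; the subsequence principle then gives convergence of the whole family. Your explicit formula for \(\lambda_{1,0}(\alpha)\), obtained by testing \eqref{eq:intro-limit-equation} with \(\bar u\), is exactly the identity \eqref{eq:abstract-multiplier} of Lemma~\ref{lem:constrained-minimization}, which the paper invokes implicitly through the uniqueness of the multiplier attached to a given minimizer.
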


The proof uses a cellwise trace approximation to recover the nonlinear
surface terms from strong volume convergence
(Proposition~\ref{prop:nonlinear-surface-compactness}). The resulting limit
of the flux--gradient pairing then enables the monotonicity argument that
identifies the effective flux and the multiplier.

\subsection{Organization of the paper}
\label{subsec:organization}

Section~\ref{sec:preliminaries} develops the Orlicz functional setting,
two-scale convergence, cell inequalities, uniform extensions, and nonlinear
surface compactness. Section~\ref{sec:homogenization} establishes microscopic
solvability and uniform estimates, constructs the homogenized cell law and
recovery sequences, proves the convergence of normalized eigencouples,
and closes with three benchmark models.  Extensions to a different surface \(N\)-function, sign-changing densities with zero cell average, and higher normalized eigencouples are left for future work.

\section{Functional framework and two-scale tools}
\label{sec:preliminaries}

\subsection{Orlicz functional setting}
\label{subsec:orlicz}

We denote by \(E\) a fixed sequence of numbers \(\varepsilon\in(0,1)\) with \(K_{\varepsilon}\neq\varnothing\) and \(\varepsilon\to0\) (a fundamental sequence).  Families indexed by \(\varepsilon\), suprema over \(\varepsilon\), limits as \(\varepsilon\to0\), two-scale convergence and extractions of subsequences refer to \(\varepsilon\in E\).  Since \(E\) is arbitrary, a convergence proved for every \(E\) holds as \(\varepsilon\to0\) through \((0,1)\).  Finally, \(\mathbb{N}=\{1,2,\ldots\}\), \(\# K_{\varepsilon}\) is the cardinality of \(K_{\varepsilon}\), and \(\lvert\cdot\rvert\) denotes Lebesgue or surface measure.

We use the framework and notation of \cite[Section~2]{TachagoNnang2012} and
\cite[Subsection~2.1]{TachagoNnangZappale2021}; standard facts on \(N\)-functions and Orlicz spaces are taken from \cite[Chapters~2--3]{RaoRen1991} and \cite[Chapter~II]{KrasnoselskiiRutickii1961}.  Under \((H_1)\), Young's equality \(\widetilde{\Phi}(\phi(s))=s\phi(s)-\Phi(s)\) and \eqref{eq:Phi-indices} give, for every \(s\geq0\),
\begin{equation}
\widetilde{\Phi}(\phi(s))
\leq
(p^{+}-1)\Phi(s).
\label{eq:phi-complement-bound}
\end{equation}
Integrating \(p^{-}/r\leq\phi(r)/\Phi(r)\leq p^{+}/r\) between \(t\) and \(st\) gives, for every \(t\geq0\),
\begin{align}
s^{p^{-}}\Phi(t)
&\leq
\Phi(st)
\leq
s^{p^{+}}\Phi(t)
&&\text{if }s\geq1,
\label{eq:scaling-large}
\\
s^{p^{+}}\Phi(t)
&\leq
\Phi(st)
\leq
s^{p^{-}}\Phi(t)
&&\text{if }0<s\leq1.
\label{eq:scaling-small}
\end{align}
Consequently, for every \(c>0\) there is \(C(c)>0\) such that, for every \(t\geq0\),
\begin{equation}
\Phi(ct)\leq C(c)\Phi(t),
\label{eq:fixed-scaling}
\end{equation}
and, for every \(d\in\mathbb{N}\), since \(\lvert\xi_{i}\rvert\leq\lvert\xi\rvert\leq d\max_{1\leq i\leq d}\lvert\xi_{i}\rvert\), there are \(c_{d},C_{d}>0\) such that, for every \(\xi\in\mathbb{R}^{d}\),
\begin{equation}
c_{d}\sum_{i=1}^{d}\Phi(\lvert\xi_{i}\rvert)
\leq
\Phi(\lvert\xi\rvert)
\leq
C_{d}\sum_{i=1}^{d}\Phi(\lvert\xi_{i}\rvert).
\label{eq:vector-component-modular-equivalence}
\end{equation}

The flux estimates \eqref{eq:A-growth}--\eqref{eq:A-strict} follow
from \((H_2)\). Indeed, convexity gives
\(A(y,\xi)\cdot\eta\leq F(y,\xi+\eta)-F(y,\xi)\) for every \(\eta\).
Taking \(\eta=-\xi\) and \(\eta=\xi\), and using
\eqref{eq:scaling-large}, yields
\[
c_{0}\Phi(|\xi|)\leq F(y,\xi)
\leq A(y,\xi)\cdot\xi
\leq F(y,2\xi)\leq 2^{p^{+}}c_{1}\Phi(|\xi|).
\]
For \(A(y,\xi)\ne0\), choose
\(\eta=s A(y,\xi)/|A(y,\xi)|\), \(s\geq0\); then
\[
s|A(y,\xi)|\leq c_{1}\Phi(|\xi|+s)
\leq c_{2}\bigl(\Phi(|\xi|)+\Phi(s)\bigr).
\]
The definition of \(\widetilde\Phi\) gives \eqref{eq:A-growth}.
Strict convexity of \(F(y,\cdot)\) gives \eqref{eq:A-strict}.
If \((y_n,\xi_n)\to(y,\xi)\) in \(\overline{Y}\times\mathbb{R}^{N}\), the convex functions \(F(y_n,\cdot)\) converge locally uniformly to the differentiable function \(F(y,\cdot)\), so \(A(y_n,\xi_n)\to A(y,\xi)\) \cite[Section~25]{Rockafellar1970}; thus \(A\) is continuous on \(\overline{Y}\times\mathbb{R}^{N}\).
This continuity ensures that the composite fields
\(A(y,\Psi(x,y))\), with \(\Psi\) smooth and periodic, are admissible
two-scale tests.

For a finite measure space \((Q,\mathfrak{m})\), \(L^{\Phi}(Q,\mathfrak{m})\) is the Orlicz space with Luxemburg norm
\(\lVert v\rVert_{\Phi,Q,\mathfrak{m}}=\inf\{a>0:\int_{Q}\Phi(\lvert v\rvert/a)\,d\mathfrak{m}\leq1\}\); the measure is omitted when it is Lebesgue measure.  Since \(\Phi,\widetilde{\Phi}\in\Delta_{2}\), the space is separable and reflexive, \(\bigl(L^{\widetilde{\Phi}}(Q,\mathfrak{m})\bigr)'=L^{\Phi}(Q,\mathfrak{m})\), and, for every \(v\in L^{\Phi}(Q,\mathfrak{m})\) and every \(w\in L^{\widetilde{\Phi}}(Q,\mathfrak{m})\),
\begin{equation}
\int_{Q}\lvert vw\rvert\,d\mathfrak{m}
\leq
2\lVert v\rVert_{\Phi,Q,\mathfrak{m}}
\lVert w\rVert_{\widetilde{\Phi},Q,\mathfrak{m}}.
\label{eq:Orlicz-Holder}
\end{equation}
Inserting \(v/\lVert v\rVert_{\Phi,Q,\mathfrak{m}}\) in \eqref{eq:scaling-large}--\eqref{eq:scaling-small} gives, for every \(v\in L^{\Phi}(Q,\mathfrak{m})\),
\begin{equation}
\min\bigl\{\lVert v\rVert^{p^{-}},\lVert v\rVert^{p^{+}}\bigr\}
\leq
\int_{Q}\Phi(\lvert v\rvert)\,d\mathfrak{m}
\leq
\max\bigl\{\lVert v\rVert^{p^{-}},\lVert v\rVert^{p^{+}}\bigr\},
\qquad \lVert v\rVert=\lVert v\rVert_{\Phi,Q,\mathfrak{m}},
\label{eq:modular-small-norm}
\end{equation}
so that modular and norm convergence coincide.  For an open set \(Q\subset\mathbb{R}^{d}\), \(W^{1}L^{\Phi}(Q)\) is the space of \(v\in L^{\Phi}(Q)\) with \(D_{x_{i}}v\in L^{\Phi}(Q)\), \(i=1,\ldots,d\), normed by
\(\lVert v\rVert_{\Phi,Q}+\sum_{i=1}^{d}\lVert D_{x_{i}}v\rVert_{\Phi,Q}\), and \(W_{0}^{1}L^{\Phi}(Q)\) is the closure of \(C_{c}^{\infty}(Q)\).  Smooth functions are dense in these spaces under \(\Delta_{2}\) \cite{Gossez1982}.  If \(Q\) is bounded with diameter \(d_{Q}\), then, for every \(v\in W_{0}^{1}L^{\Phi}(Q)\),
\begin{equation}
\int_{Q}\Phi(\lvert v\rvert)\,dx
\leq
\int_{Q}\Phi\bigl(d_{Q}\lvert\nabla v\rvert\bigr)\,dx.
\label{eq:Orlicz-Poincare}
\end{equation}
Indeed, for \(v\in C_{c}^{\infty}(Q)\) and \(x=(x_{1},x')\), the function \(v(\cdot,x')\) is supported in an interval \(I(x')\) of length at most \(d_{Q}\); Jensen's inequality gives
\(\Phi(\lvert v(x)\rvert)\leq d_{Q}^{-1}\int_{I(x')}\Phi(d_{Q}\lvert D_{x_{1}}v(t,x')\rvert)\,dt\), and integration in \(x_{1}\) and \(x'\) proves \eqref{eq:Orlicz-Poincare}.  By \eqref{eq:modular-small-norm}, it implies \(\lVert v\rVert_{\Phi,Q}\leq C_{Q}\lVert\nabla v\rVert_{\Phi,Q}\).  On bounded Lipschitz domains, \(W^{1}L^{\Phi}\hookrightarrow L^{\Phi}\) is compact \cite{DonaldsonTrudinger1971,RaoRen1991}:
\begin{equation}
W^{1}L^{\Phi}(\Omega)
\hookrightarrow\hookrightarrow
L^{\Phi}(\Omega).
\label{eq:compact-volume-embedding}
\end{equation}

\(C_{\mathrm{per}}(Y)\) and \(C_{\mathrm{per}}^{\infty}(Y)\) denote the continuous and smooth \(Y\)-periodic functions on \(\mathbb{R}^{N}\).  The space \(W_{\mathrm{per}}^{1}L^{\Phi}(Y^{*})\) consists of the functions of \(W^{1}L^{\Phi}(Y^{*})\) whose traces coincide on each pair of opposite faces of \(Y\), that is, which extend periodically to \(\mathbb R^N\setminus\bigcup_{k\in\mathbb Z^N}(k+\overline T)\).  Following \cite{TachagoNnangZappale2021},
\begin{equation}
W_{\#}^{1}L^{\Phi}(Y^{*})
=
\left\{
w\in W_{\mathrm{per}}^{1}L^{\Phi}(Y^{*}):
\int_{Y^{*}}w(y)\,dy=0
\right\},
\label{eq:periodic-Orlicz-space}
\end{equation}
normed by \(\sum_{i=1}^{N}\lVert D_{y_{i}}w\rVert_{\Phi,Y^{*}}\); by Lemma~\ref{lem:Lipschitz-Poincare} below this is a norm.  The spaces \(W_{\mathrm{per}}^{1}L^{\Phi}(Y)\) and \(W_{\#}^{1}L^{\Phi}(Y)\) are defined in the same way.

The boundary modular uses the same \(N\)-function as the volume space.  The following small-parameter estimate gives the trace compactness needed at fixed \(\varepsilon\); the hypothesis \(p^{-}>1\) supplies it through the factor \(\delta^{p^{-}-1}\).  Sharp Orlicz trace embeddings and general compactness criteria are given in \cite{CavaliereMihula2019,Cianchi2010}.

\begin{proposition}
\label{prop:compact-Orlicz-trace}
Let \(D\subset\mathbb{R}^{N}\) be a bounded Lipschitz domain and let
\(\Sigma\subset\partial D\) be a compact \(C^{1,1}\) boundary portion which
admits a one-sided tubular neighbourhood contained in \(D\).  Under
\((H_1)\), there are constants
\(\delta_{0}>0\) and \(C>0\), depending only on the local geometry of
\(\Sigma\) in \(D\) and on \(\Phi\), such that, for every
\(0<\delta<\delta_{0}\) and every \(w\in W^{1}L^{\Phi}(D)\), writing
\(D_{\delta}=D_{\delta}(\Sigma)\) for the inner collar of width \(\delta\) along \(\Sigma\), one has
\begin{equation}
\int_{\Sigma}\Phi(\lvert\gamma_{0}w\rvert)\,d\sigma
\leq \frac{C}{\delta}\int_{D_{\delta}}\Phi(C\lvert w\rvert)\,dx
+C\delta^{p^{-}-1}\int_{D_{\delta}}\Phi(\lvert\nabla w\rvert)\,dx.
\label{eq:small-parameter-trace}
\end{equation}
Consequently,
\begin{equation}
\gamma_{0}:W^{1}L^{\Phi}(D)\longrightarrow L^{\Phi}(\Sigma)
\quad\text{is continuous and compact.}
\label{eq:abstract-compact-trace}
\end{equation}
In particular,
\begin{equation}
\gamma_{0}:
W^{1}L^{\Phi}(Y^{*})
\longrightarrow
L^{\Phi}(\partial T)
\quad\text{is continuous and compact}.
\label{eq:trace-embedding}
\end{equation}
For each fixed \(\varepsilon>0\) with \(K_{\varepsilon}\neq\varnothing\),
the same conclusion holds from
\(W^{1}L^{\Phi}(\Omega_{\varepsilon})\) into
\(L^{\Phi}(\Gamma_{\varepsilon})\).
\end{proposition}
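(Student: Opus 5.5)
We will prove \eqref{eq:small-parameter-trace} for smooth \(w\) by flattening the collar with normal coordinates and applying a one-dimensional fundamental-theorem argument. The general case then follows by density of smooth functions in \(W^{1}L^{\Phi}(D)\) under \(\Delta_{2}\) \cite{Gossez1982}, together with Fatou's lemma on the left-hand side.

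\textbf{Collar coordinates.} Since \(\Sigma\) is compact and \(C^{1,1}\) and admits a one-sided tubular neighbourhood in \(D\), there is \(\delta_{0}>0\) such that
\[
\Psi(z,t)=z-t\nu(z),\qquad (z,t)\in\Sigma\times(0,\delta_{0}),
\]
where \(\nu\) is the outward normal, is a bi-Lipschitz map onto \(D_{\delta_{0}}\). Its Jacobian \(J(z,t)\) satisfies \(c^{-1}\le J\le c\). We define \(D_{\delta}=\Psi(\Sigma\times(0,\delta))\).

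\textbf{Pointwise estimate along normals.} For \(z\in\Sigma\) and \(0<s<\delta\), write
\[
w(z)=w(\Psi(z,s))-\int_{0}^{s}\partial_{t}w(\Psi(z,t))\,dt,
\]
so that \(\lvert w(z)\rvert\le\lvert w(\Psi(z,s))\rvert+\int_{0}^{\delta}\lvert\nabla w(\Psi(z,t))\rvert\,dt\). By convexity, \(\Phi(a+b)\le\frac12\Phi(2a)+\frac12\Phi(2b)\). Averaging in \(s\) over \((0,\delta)\) and applying Jensen gives
\[
\frac1\delta\int_{0}^{\delta}\Phi\bigl(2\lvert w(\Psi(z,s))\rvert\bigr)\,ds
\]
for the first term. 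For the second term, Jensen gives
\[
\Phi\Bigl(2\int_{0}^{\delta}\lvert\nabla w\rvert\,dt\Bigr)
=\Phi\Bigl(2\delta\cdot\frac1\delta\int_{0}^{\delta}\lvert\nabla w\rvert\,dt\Bigr)
\le\frac1\delta\int_{0}^{\delta}\Phi\bigl(2\delta\lvert\nabla w\rvert\bigr)\,dt.
\]
For \(\delta<\delta_{0}\le\frac12\), \eqref{eq:scaling-small} with \(s=2\delta\le1\) yields \(\Phi(2\delta r)\le(2\delta)^{p^{-}}\Phi(r)\). Hence the gradient term is bounded by \(C\delta^{p^{-}-1}\int_{0}^{\delta}\Phi(\lvert\nabla w(\Psi(z,t))\rvert)\,dt\), and this is where \(p^{-}>1\) enters. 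Integrating over \(\Sigma\) and changing variables with the bounded Jacobian gives \eqref{eq:small-parameter-trace}, with \(C\lvert w\rvert\) absorbing the factor \(2\).

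\textbf{Continuity and compactness.} Fix \(\delta\). Applying the estimate to \(w/\lambda\) and using \eqref{eq:modular-small-norm} gives \(\lVert\gamma_{0}w\rVert_{\Phi,\Sigma}\le C'\lVert w\rVert_{W^{1}L^{\Phi}(D)}\), so \(\gamma_{0}\) is continuous.

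For compactness, let \(w_{n}\) be bounded in \(W^{1}L^{\Phi}(D)\). Reflexivity and \eqref{eq:compact-volume-embedding} allow us to extract a subsequence converging strongly in \(L^{\Phi}(D)\) to some \(w\). Apply \eqref{eq:small-parameter-trace} to \((w_{n}-w_{m})/\lambda\) for fixed \(\lambda>0\). The gradient term is bounded by \(C\delta^{p^{-}-1}\) times a quantity uniform in \(n,m\), by \eqref{eq:fixed-scaling} and the norm bound. So we first choose \(\delta\) small to make it small, and then the volume term tends to \(0\) as \(n,m\to\infty\). Thus the traces are Cauchy in modular for every scale \(\lambda\), hence Cauchy in norm.

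\textbf{Specializations.} Statement \eqref{eq:trace-embedding} follows by taking \(D=Y^{*}\) and \(\Sigma=\partial T\); the inner collar exists because \(T\Subset Y\) and \(\partial T\in C^{1,1}\). At fixed \(\varepsilon\), \(\Gamma_{\varepsilon}\) is a finite union of rescaled copies of \(\partial T\), each with a collar inside \(\Omega_{\varepsilon}\), so the same argument applies. The Lipschitz regularity of \(D\) is used only for the compact volume embedding.

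\textbf{Main obstacle.} The delicate step is the geometric one: verifying that \(\Psi\) is bi-Lipschitz with uniformly bounded Jacobian when \(\Sigma\) is only \(C^{1,1}\) (the normal field is merely Lipschitz), and that the density and limit argument gives the trace inequality for all of \(W^{1}L^{\Phi}\). We handle this with the standard reach/tubular-neighbourhood theory for \(C^{1,1}\) hypersurfaces, where \(\delta_{0}\) is less than the reach and the curvature bound controls \(J\).
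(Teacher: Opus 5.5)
Your proposal is correct and follows essentially the same route as the paper: the fundamental theorem of calculus along the inward normals of the $C^{1,1}$ collar, averaging in the normal variable, Jensen's inequality and the small-scale bound $\Phi(c\delta r)\le (c\delta)^{p^{-}}\Phi(r)$ to produce the factor $\delta^{p^{-}-1}$, then density, continuity at a fixed $\delta$, and compactness by first taking $\delta$ small and then using strong $L^{\Phi}$ convergence in the volume. The differences are only cosmetic: you average over $(0,\delta)$ after enlarging the inner integral instead of over $(\delta/2,\delta)$, and you prove compactness with a Cauchy argument at every Luxemburg scale rather than the paper's weak-to-strong argument for $w_n-w$.
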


\begin{proof}
We first prove \eqref{eq:small-parameter-trace} for smooth functions.  Since
\(\Sigma\) is \(C^{1,1}\), after decreasing \(\delta_{0}\) if necessary,
there is a one-sided tubular parametrization
\[
\Theta:\Sigma\times(0,\delta_{0})\longrightarrow D,
\qquad
\Theta(z,s)=z+s n_{D,\mathrm{in}}(z),
\]
where \(n_{D,\mathrm{in}}\) is the inward unit normal to \(D\) along \(\Sigma\).  The Jacobian of \(\Theta\) and
that of its inverse are bounded above and below by positive constants.

Fix \(z\in\Sigma\) and \(0<s<\delta_{0}\).  The fundamental theorem of
calculus along the inward normal gives
\[
\lvert w(z)\rvert
\leq
\lvert w(\Theta(z,s))\rvert
+
\int_{0}^{s}
\lvert\nabla w(\Theta(z,r))\rvert\,dr.
\]
By convexity of \(\Phi\), \eqref{eq:fixed-scaling}, and Jensen's inequality,
\begin{equation*}
\Phi(\lvert w(z)\rvert)
\leq
C\Phi(C\lvert w(\Theta(z,s))\rvert)
\quad+
\frac{C}{s}
\int_{0}^{s}
\Phi\bigl(Cs\lvert\nabla w(\Theta(z,r))\rvert\bigr)\,dr.
\end{equation*}
Average this inequality for \(s\in(\delta/2,\delta)\).  Since
\(s^{-1}\leq2/\delta\) on this interval and \(s\leq\delta\), Fubini's
theorem gives
\begin{equation}
\Phi(\lvert w(z)\rvert)
\leq
\frac{C}{\delta}
\int_{0}^{\delta}
\Phi(C\lvert w(\Theta(z,s))\rvert)\,ds
\quad+
\frac{C}{\delta}
\int_{0}^{\delta}
\Phi\bigl(C\delta\lvert\nabla w(\Theta(z,r))\rvert\bigr)\,dr.
\label{eq:pointwise-small-trace}
\end{equation}
Choose \(\delta_{0}\) still smaller so that \(C\delta_{0}\leq1\).  The
small-scale estimate \eqref{eq:scaling-small} yields
\[
\Phi(C\delta t)
\leq
(C\delta)^{p^{-}}\Phi(t)
\qquad
(0<\delta<\delta_{0},\ t\geq0).
\]
Integrating \eqref{eq:pointwise-small-trace} over \(\Sigma\), using the
uniform bounds for the tubular Jacobian, and then using
\eqref{eq:vector-component-modular-equivalence}, proves
\eqref{eq:small-parameter-trace} for smooth \(w\).  Density in
\(W^{1}L^{\Phi}(D)\), which is available under the global
\(\Delta_{2}\)-condition, extends the estimate and defines the continuous
trace operator.  Fixing one \(\delta\in(0,\delta_{0})\) in
\eqref{eq:small-parameter-trace} gives its continuity into
\(L^{\Phi}(\Sigma)\).

For compactness, let \(w_{n}\rightharpoonup w\) in \(W^{1}L^{\Phi}(D)\) and \(z_{n}=w_{n}-w\).  By \eqref{eq:compact-volume-embedding}, \eqref{eq:modular-small-norm} and \eqref{eq:fixed-scaling}, \(\int_{D}\Phi(C\lvert z_{n}\rvert)\,dx\to0\) for every \(C>0\), while \(M=\sup_{n\in\mathbb{N}}\int_{D}\Phi(\lvert\nabla z_{n}\rvert)\,dx<\infty\).  Applying \eqref{eq:small-parameter-trace} to \(z_{n}\) gives
\(\limsup_{n\to\infty}\int_{\Sigma}\Phi(\lvert\gamma_{0}z_{n}\rvert)\,d\sigma\leq CM\delta^{p^{-}-1}\) for every \(\delta\in(0,\delta_{0})\).  Since \(p^{-}>1\), letting \(\delta\downarrow0\) and using \eqref{eq:modular-small-norm} gives \(\gamma_{0}w_{n}\to\gamma_{0}w\) in \(L^{\Phi}(\Sigma)\).  For \(D=Y^{*}\), \(\Sigma=\partial T\), the inclusion \(\overline T\subset Y\) provides the collar, which gives \eqref{eq:trace-embedding}; for fixed \(\varepsilon\), \(\Gamma_{\varepsilon}\) is a finite disjoint union of scaled copies of \(\partial T\) inside \(\Omega\), and the same argument applies with constants depending on \(\varepsilon\).
\end{proof}

\begin{lemma}
\label{lem:nonlinear-modular-continuity}
\textup{(a)} Let \((Q_n,\mathfrak{m}_n)\), \(n\in\mathbb N\), be finite measure spaces with
\(\sup_{n\in\mathbb N}\mathfrak{m}_n(Q_n)<\infty\), and let \(a_n,b_n\in L^\Phi(Q_n,\mathfrak{m}_n)\)
satisfy, for every \(C>0\),
\begin{align}
\int_{Q_n}\Phi(|a_n-b_n|)\,d\mathfrak{m}_n&\longrightarrow0,
\label{eq:nonlinear-difference}\\
\sup_{n\in\mathbb N}\int_{Q_n}
 [\Phi(C|a_n|)+\Phi(C|b_n|)]\,d\mathfrak{m}_n&<\infty.
\label{eq:nonlinear-boundedness}
\end{align}
Put \(H(t)=t\phi(t)\) for \(t\ge0\) and
\(g(t)=\phi(|t|)\operatorname{sgn}(t)\) for \(t\in\mathbb R\).
Then the differences
\[
\Phi(|a_n|)-\Phi(|b_n|),\qquad
H(|a_n|)-H(|b_n|),\qquad g(a_n)-g(b_n)
\]
converge to zero in \(L^1(Q_n,\mathfrak{m}_n)\). Under
\eqref{eq:fixed-scaling}, the second hypothesis is equivalent to the same
uniform bound at one fixed positive scale.

\textup{(b)} Let \(D\subset\mathbb R^d\) have finite measure and let
\(\mathcal{B}:D\times\mathbb R^N\to\mathbb R^N\) be a Carath\'eodory mapping such that
\(\widetilde\Phi(|\mathcal{B}(z,\xi)|/c)\le C\Phi(|\xi|)\) for almost every \(z\)
and every \(\xi\). If \(V_n\to V\) in \(L^\Phi(D)^N\), then
\(\mathcal{B}(\cdot,V_n)\to \mathcal{B}(\cdot,V)\) in \(L^{\widetilde\Phi}(D)^N\).
Moreover, if \(\mathcal{G}:D\times\mathbb R^N\to\mathbb R\) is Carath\'eodory
and \(|\mathcal{G}(z,\xi)|\le C_{\mathcal{G}}\Phi(|\xi|)\), then
\(\mathcal{G}(\cdot,V_n)\to \mathcal{G}(\cdot,V)\) in \(L^1(D)\).
In particular, the last assertion applies to the integrand \(F\) under
\((H_2)\), with its spatial argument understood periodically.
\end{lemma}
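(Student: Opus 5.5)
For part (a) we will use a truncation argument, splitting the integration domain into a set where both functions are bounded and a set where one of them is large. Fix \(\eta>0\) and \(R>0\).

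For \(\Phi\) and \(H\) (both continuous, increasing functions of \(|t|\)) and for \(g\) (continuous on \(\mathbb R\)), each of \(\Phi(|\cdot|),H(|\cdot|),g\) is uniformly continuous on \([-R,R]\). Let \(\omega_R\) be a common modulus of continuity there.

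We split \(Q_n\) into three sets:
\begin{itemize}
\item \(G_n=\{|a_n|\le R,\ |b_n|\le R,\ |a_n-b_n|\le\eta\}\);
\item \(B_n=\{|a_n|>R\}\cup\{|b_n|>R\}\);
\item \(D_n=\{|a_n-b_n|>\eta\}\setminus B_n\).
\end{itemize}

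On \(G_n\) the integrand of each difference is at most \(\omega_R(\eta)\). This contributes at most \(\omega_R(\eta)\sup_n\mathfrak m_n(Q_n)\).

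On \(D_n\) the integrand is bounded by a constant \(C_R\). By Chebyshev and \eqref{eq:nonlinear-difference}, \(\mathfrak m_n(D_n)\le\Phi(\eta)^{-1}\int\Phi(|a_n-b_n|)\to0\).

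On \(B_n\) we bound each integrand by \(f(|a_n|)+f(|b_n|)\), with \(f\in\{\Phi,H,\phi\}\). We then need uniform integrability of \(f(|a_n|)\) and \(f(|b_n|)\) over the sets \(\{|a_n|>R\}\cup\{|b_n|>R\}\).

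This step is the main obstacle. It is handled by the extra-scale bound \eqref{eq:nonlinear-boundedness} with \(C=2\), together with the following comparisons:
\begin{itemize}
\item \(H\le p^{+}\Phi\), by \eqref{eq:Phi-indices};
\item \(\phi(t)\le \Phi(2t)/t\) by convexity, so \(\phi\) is controlled by \(\Phi\) for \(t\ge1\) (and \(\phi\) is bounded for \(t\le1\)).
\end{itemize}
By \eqref{eq:scaling-large}, \(\Phi(2t)\ge 2^{p^-}\Phi(t)\). Hence on \(\{|a_n|>R\}\) we have \(\Phi(|a_n|)\le\Phi(2|a_n|)\cdot\Phi(|a_n|)/\Phi(2|a_n|)\). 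That gain alone is not enough, so we use a de la Vallée Poussin type gain instead. Fix \(M\ge1\). By \eqref{eq:scaling-large}, \(\Phi(Mt)\ge M^{p^-}\Phi(t)\), hence \(\int\Phi(|a_n|)\le M^{-p^-}\sup_n\int\Phi(M|a_n|)\). This is only a global bound, though, so it has to be localized.

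To localize, we use that \(\Phi(t)/t\to\infty\) (an \(N\)-function property). On \(\{|a_n|>R\}\) we estimate \(\Phi(|a_n|)\le \Phi(2|a_n|)\,\sup_{t>R}\Phi(t)/\Phi(2t)\). That ratio does not tend to zero, so we argue differently. On the set where \(|b_n|>R\) but \(|a_n|\le R/2\), we have \(|a_n-b_n|>R/2\). Its measure and its \(\Phi\)-integral are controlled by \eqref{eq:nonlinear-difference} together with \(\Phi(|b_n|)\le C(\Phi(|a_n|)+\Phi(|a_n-b_n|))\), which follows from \eqref{eq:fixed-scaling} and convexity.

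In this way everything reduces to showing that the tails \(\int_{\{|a_n|>R\}}\Phi(|a_n|)\), and similarly for \(H\), are small uniformly in \(n\). The \(\phi\)-terms are dominated since \(\phi(t)\le p^+\Phi(t)/t\le p^+\Phi(t)\) for \(t\ge1\).

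For the uniform tail smallness of \(\Phi(|a_n|)\) we will use a genuine superlinear comparison rather than the scaling above. By \eqref{eq:phi-indices}, \(t\phi(t)\) is comparable to \(\Phi\), which does not help directly. So we instead read hypothesis \eqref{eq:nonlinear-boundedness} as uniform integrability in the following sense. By \(\Delta_2\) and the Vitali-type criterion, \(\Phi(|a_n|)\) is uniformly integrable as soon as \(\sup_n\int\Psi(|a_n|)<\infty\) for some \(\Psi\) with \(\Psi/\Phi\to\infty\) at infinity. If that is not available, we fall back on passing to the limit along subsequences. We would try first to exploit the hypothesis exactly as stated, possibly after checking that \(a_n-b_n\to0\) in modular at every scale (which \(\Delta_2\) gives). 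We then conclude with Vitali applied to the differences, letting \(n\to\infty\), then \(\eta\to0\), then \(R\to\infty\).

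For part (b) we use a standard Krasnosel'skii argument. Take any subsequence of \(V_n\); it has a further subsequence with \(V_n\to V\) a.e. and \(\Phi(|V_n|)\le h\) for some \(h\in L^1\). The latter follows from a fast-converging subsequence via \(\Phi(|V_n|)\le C\bigl(\Phi(|V|)+\sum_k\Phi(|V_{n_k}-V|)\bigr)\).

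Then \(\widetilde\Phi(|\mathcal B(z,V_n)-\mathcal B(z,V)|/2c)\le C(h+\Phi(|V|))/2\). Since \(\mathcal B\) is Carathéodory, this integrand converges to zero a.e., so dominated convergence gives modular convergence. By \eqref{eq:modular-small-norm} applied to \(\widetilde\Phi\in\Delta_2\), this yields norm convergence in \(L^{\widetilde\Phi}(D)^N\); uniqueness of the limit gives it for the whole sequence.

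The same domination \(|\mathcal G(z,V_n)|\le C_{\mathcal G}h\) gives the \(L^1\) convergence of \(\mathcal G(\cdot,V_n)\). For \(F\), the growth bound \eqref{eq:F-growth} and the continuity assumption in \((H_2)\) provide exactly these hypotheses.
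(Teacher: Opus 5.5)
Your part (b) is correct. The fast-subsequence majorant plus dominated convergence is the classical Krasnosel'skii argument and does the same job as the Vitali argument in the paper.

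Part (a), however, has a genuine gap at the set $B_n$. There you bound $|\Phi(|a_n|)-\Phi(|b_n|)|$ and $|H(|a_n|)-H(|b_n|)|$ by $f(|a_n|)+f(|b_n|)$. This reduces everything to the uniform smallness of $\int_{\{|a_n|>R\}}\Phi(|a_n|)\,d\mathfrak m_n$, and that claim is false under the hypotheses. By \eqref{eq:scaling-large}, $\Phi(Ct)\le C^{p^+}\Phi(t)$ for $C\ge1$. So \eqref{eq:nonlinear-boundedness} at every scale is nothing more than a uniform $L^1$ bound on $\Phi(|a_n|)$, as the lemma itself notes, and no superlinear $\Psi$ with $\sup_n\int\Psi(|a_n|)\,d\mathfrak m_n<\infty$ can be extracted. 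For instance, take $Q_n=(0,1)$, $\Phi(t)=t^p/p$ with $p>1$, and $a_n=b_n=n^{1/p}\mathbf{1}_{(0,1/n)}$. Both hypotheses hold, yet $\int_{\{|a_n|>R\}}\Phi(|a_n|)\,dx=1/p$ for every $n>R^p$. The conclusion holds here only through cancellation between $a_n$ and $b_n$, which your sum bound discards. Your closing fallback (``passing to the limit along subsequences'') does not resolve this, because extracting subsequences does not produce uniform integrability.

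The missing idea is to keep the cancellation. Since $\phi$ is increasing and, by \eqref{eq:phi-indices}, $H'(t)=\phi(t)+t\phi'(t)\le\kappa^+\phi(t)$, the mean value theorem gives $|\Phi(r)-\Phi(s)|+|H(r)-H(s)|\le C|r-s|\,[\phi(r)+\phi(s)]$ for $r,s\ge0$. This is exactly where \eqref{eq:phi-indices} is needed, although you set it aside as not helping. Then apply the Orlicz--H\"older inequality \eqref{eq:Orlicz-Holder} on $(Q_n,\mathfrak m_n)$ using two facts:
\begin{itemize}
\item $\|a_n-b_n\|_{\Phi,Q_n,\mathfrak m_n}\to0$, from \eqref{eq:nonlinear-difference} and \eqref{eq:modular-small-norm};
\item a uniform $L^{\widetilde\Phi}$ bound on $\phi(|a_n|)$ and $\phi(|b_n|)$, supplied by $\widetilde\Phi(\phi(s))\le(p^+-1)\Phi(s)$.
\end{itemize}
This gives the first two conclusions directly, with no truncation at all.

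For $g$, your truncation scheme can be made to work, but not with the domination $\phi(t)\le p^+\Phi(t)$ you wrote, which again leads to the false tail claim. You must keep the factor $1/t$:
\[
\int_{\{|a_n|>R\}}\phi(|a_n|)\,d\mathfrak m_n\le p^+R^{-1}\sup_n\int_{Q_n}\Phi(|a_n|)\,d\mathfrak m_n .
\]
Equivalently, use the $L^{\widetilde\Phi}$ bound plus superlinearity of $\widetilde\Phi$, as in the paper. This makes $\phi(|a_n|)$ and $\phi(|b_n|)$ genuinely uniformly integrable, and the rest of your $G_n,D_n,B_n$ splitting then goes through.
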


\begin{proof}
(a) By \eqref{eq:phi-indices},
\(H'(t)\le\kappa^+\phi(t)\) and \(\phi\) is doubling. Thus, for
\(r,s\ge0\),
\[
|\Phi(r)-\Phi(s)|+|H(r)-H(s)|
 \le C|r-s|[\phi(r)+\phi(s)].
\]
The uniform modular bound, \eqref{eq:phi-complement-bound} and
\eqref{eq:modular-small-norm} give a uniform
\(L^{\widetilde\Phi}\)-bound on \(\phi(|a_n|),\phi(|b_n|)\), while
\(\|a_n-b_n\|_{\Phi,Q_n,\mathfrak{m}_n}\to0\).
Orlicz--H\"older proves the first two conclusions. For the third,
\(a_n-b_n\to0\) in measure on the varying measure spaces, and
continuity of \(g\), uniformly on
bounded intervals, implies convergence of the difference in measure.
The same \(L^{\widetilde\Phi}\)-bounds and the superlinearity of
\(\widetilde\Phi\) yield uniform integrability on these spaces;
Vitali's argument gives convergence in \(L^1\).

(b) Strong convergence in \(L^\Phi(D)^N\) implies convergence in measure
and uniform integrability of \(\{\Phi(|V_n|)\}_n\); indeed, by convexity
and \eqref{eq:fixed-scaling},
\[
\Phi(|V_n|)\le\tfrac12\Phi(2|V|)
 +\tfrac12\Phi(2|V_n-V|),
\qquad\int_D\Phi(2|V_n-V|)\,dz\longrightarrow0.
\]
Along any almost-everywhere convergent subsequence, continuity of \(B\)
and convexity of \(\widetilde\Phi\) give
\[
\widetilde\Phi\!\left(
 \frac{|\mathcal{B}(z,V_n)-\mathcal{B}(z,V)|}{2c}\right)
 \le\frac C2\,[\Phi(|V_n|)+\Phi(|V|)].
\]
Vitali's theorem, followed by the \(\Delta_2\)-condition for
\(\widetilde\Phi\), implies strong \(L^{\widetilde\Phi}\)-convergence.
The bound on \(\mathcal{G}\) gives the analogous uniformly integrable majorants
\(|\mathcal{G}(z,V_n)|\le C_{\mathcal{G}}\Phi(|V_n|)\), hence its \(L^1\)-convergence.
Every subsequence admits such a further subsequence, which proves
convergence for the whole family.
\end{proof}

By \eqref{eq:A-growth}, part \textup{(b)} applies to \(\mathcal{B}=A\).

\begin{lemma}
\label{lem:smooth-periodic-density}
Set
\[
\mathcal{D}
=
C_{c}^{\infty}
\bigl(\Omega;C_{\mathrm{per}}^{\infty}(Y)^{N}\bigr).
\]
Then the restrictions of the functions in \(\mathcal{D}\) to
\(\Omega\times Y^{*}\) are dense in
\[
L^{\Phi}(\Omega\times Y^{*})^{N}.
\]
More precisely, for every
\(V\in L^{\Phi}(\Omega\times Y^{*})^{N}\), there exists
a sequence \((V_{n})_{n\in\mathbb{N}}\subset\mathcal{D}\) such that
\begin{equation}
V_{n}\longrightarrow V
\quad\text{in }L^{\Phi}(\Omega\times Y^{*})^{N}.
\label{eq:smooth-periodic-density}
\end{equation}
\end{lemma}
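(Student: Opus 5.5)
We will reduce the density statement to two classical facts: density of simple functions in \(L^{\Phi}\) under \(\Delta_{2}\), and density of continuous functions in \(L^{1}\). Since \(\Phi\in\Delta_{2}\) globally, modular and norm convergence coincide by \eqref{eq:modular-small-norm}. Moreover, for a sequence bounded in \(L^{\infty}\) on the bounded set \(\Omega\times Y^{*}\), convergence in measure implies modular convergence by dominated convergence. It therefore suffices to approximate a bounded \(V\), then a bounded \(V\) with compact support in \(\Omega\times Y\), and finally such functions by elements of \(\mathcal{D}\) in measure with uniform \(L^{\infty}\)-bounds.

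\emph{Step 1 (truncation).} Given \(V\in L^{\Phi}(\Omega\times Y^{*})^{N}\), extend it by \(0\) to \(\Omega\times Y\). Set \(V^{(m)}=V\chi_{\{|V|\le m\}}\chi_{\Omega_{1/m}\times Y}\), where \(\Omega_{1/m}=\{x\in\Omega:\operatorname{dist}(x,\partial\Omega)>1/m\}\). Then \(\Phi(|V-V^{(m)}|)\le\Phi(|V|)\in L^{1}\) and \(V^{(m)}\to V\) almost everywhere, so dominated convergence gives \(\int\Phi(|V-V^{(m)}|)\to0\). Hence \(V^{(m)}\to V\) in \(L^{\Phi}\).

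\emph{Step 2 (smoothing a bounded function).} Fix \(W=V^{(m)}\), so that \(|W|\le m\) and \(W\) vanishes for \(x\notin\Omega_{1/m}\). Regard \(W(x,\cdot)\) as a \(Y\)-periodic function, identifying \(Y\) with the torus \(\mathbb{T}^{N}\). Then \(W\in L^{1}(\Omega\times\mathbb{T}^{N})^{N}\). Convolve with a product mollifier \(\theta_{\delta}(x)\eta_{\delta}(y)\), taking the convolution in \(y\) on the torus. The resulting \(W_{\delta}\) satisfies the following:
\begin{enumerate}
\item \(W_{\delta}\) is smooth, \(Y\)-periodic in \(y\), and compactly supported in \(x\) inside \(\Omega\) for \(\delta<1/(2m)\), so \(W_{\delta}\in\mathcal{D}\);
\item \(|W_{\delta}|\le m\);
\item \(W_{\delta}\to W\) in \(L^{1}(\Omega\times Y)\), hence in measure.
\end{enumerate}
By dominated convergence with the bound \(\Phi(2m)\), we get \(\int_{\Omega\times Y^{*}}\Phi(|W_{\delta}-W|)\to0\). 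Restricting to \(\Omega\times Y^{*}\) only decreases the modular. This is the key point: we never need to approximate well on the hole \(T\), because the restriction map simply discards it. Values of \(W_{\delta}\) inside \(T\) are irrelevant.

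\emph{Step 3 (diagonal).} We combine the two steps through a diagonal choice \(\delta=\delta(m)\), which gives \(V_{n}\in\mathcal{D}\) with \(V_{n}\to V\) in \(L^{\Phi}(\Omega\times Y^{*})^{N}\). The main technical point is ensuring that elements of \(C_{c}^{\infty}(\Omega;C_{\mathrm{per}}^{\infty}(Y))\) are produced. This requires smoothness jointly in \((x,y)\) together with compact \(x\)-support. The product mollification on \(\mathbb{R}^{N}_{x}\times\mathbb{T}^{N}_{y}\) handles this, since derivatives of all orders in \(y\) are continuous in \(x\) with compact support. We therefore do not expect a genuine obstacle; the only care needed is to perform the convolution on the torus so that periodicity is preserved.
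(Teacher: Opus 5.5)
Your proof is correct and follows essentially the same route as the paper: cut off in \(x\) away from \(\partial\Omega\), mollify in \(x\) and periodically in \(y\) on the torus, then take a diagonal sequence and restrict to \(\Omega\times Y^{*}\). The only difference is that you also truncate the values of \(V\). This lets convergence of the mollifications follow from \(L^{1}\) convergence and dominated convergence with the majorant \(\Phi(2m)\), whereas the paper appeals directly to translation continuity of the Luxemburg norm under \(\Delta_{2}\).
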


\begin{proof}
Extend \(V\) by zero to \(\Omega\times Y\) and periodically in \(y\), and denote the extension by \(\overline V\).  For cut-off functions \(\eta_{m}\in C_{c}^{\infty}(\Omega)\) with \(0\leq\eta_{m}\leq1\) and \(\eta_{m}\to1\) almost everywhere, \(\eta_{m}\overline V\to\overline V\) in \(L^{\Phi}(\Omega\times Y)^{N}\) by dominated convergence of the modular and \(\Delta_{2}\).  For fixed \(m\), mollification in \(x\) and periodic mollification in \(y\) approximate \(\eta_{m}\overline V\) in \(L^{\Phi}(\Omega\times Y)^{N}\) by elements of \(\mathcal{D}\), by translation continuity of the Luxemburg norm under \(\Delta_{2}\) \cite[Chapter~3]{RaoRen1991}.  A diagonal choice and restriction to \(\Omega\times Y^{*}\) give \eqref{eq:smooth-periodic-density}.
\end{proof}

\subsection{Two-scale convergence}
\label{subsec:two-scale}

We use the two-scale convergence of Nguetseng \cite{Nguetseng1989} in the Orlicz form and with the test spaces of \cite{TachagoNnang2012}.  Let \(\psi\) belong to
\(L^{\widetilde{\Phi}}(\Omega;C_{\mathrm{per}}(Y))\).  Its oscillating trace is
\begin{equation}
\psi^{\varepsilon}(x)
=
\psi\left(x,\frac{x}{\varepsilon}\right).
\label{eq:oscillating-test}
\end{equation}
This is well defined by continuous extension from
\(C(\overline{\Omega};C_{\mathrm{per}}(Y))\).  Indeed,
\[
\lvert\psi^{\varepsilon}(x)\rvert
\leq
\sup_{y\in Y}\lvert\psi(x,y)\rvert
\quad\text{for almost every }x\in\Omega.
\]
This is the admissible test construction used in
\cite{TachagoNnang2012,TachagoNnangZappale2021}.

\begin{definition}[Weak two-scale convergence in \(L^{\Phi}\)]
\label{def:weak-two-scale}
A sequence \((v_{\varepsilon})_{\varepsilon\in E}\subset L^{\Phi}(\Omega)\) is said to converge weakly two-scale in \(L^{\Phi}(\Omega)\) to
$
v_{0}\in L^{\Phi}(\Omega\times Y)
$, extended \(Y\)-periodically in \(y\),
if, for every \(\psi\in L^{\widetilde{\Phi}}(\Omega;C_{\mathrm{per}}(Y))\),
\begin{equation}
\lim_{\varepsilon\to0}
\int_{\Omega}v_{\varepsilon}(x)
\psi\left(x,\frac{x}{\varepsilon}\right)\,dx
=
\int_{\Omega}\int_{Y}
v_{0}(x,y)\psi(x,y)\,dy\,dx.
\label{eq:weak-two-scale-definition}
\end{equation}
We write
 $ v_{\varepsilon}
\overset{2s}{\rightharpoonup}
v_{0}
\quad\text{in }L^{\Phi}(\Omega)$.
\end{definition}

We recall the Orlicz two-scale compactness result from \cite{TachagoNnang2012}.

\begin{theorem}
\label{thm:volume-two-scale-compactness}
Every bounded sequence \((v_{\varepsilon})_{\varepsilon\in E}\) in \(L^{\Phi}(\Omega)\) has a subsequence which converges weakly two-scale in the sense of Definition~\ref{def:weak-two-scale}.  The same holds with \(\widetilde{\Phi}\) in place of \(\Phi\).
\end{theorem}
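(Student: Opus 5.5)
The plan is a duality--diagonal argument in the spirit of Nguetseng's original proof, adapted to the Orlicz setting. Let \((v_{\varepsilon})\) be bounded in \(L^{\Phi}(\Omega)\), say \(\lVert v_{\varepsilon}\rVert_{\Phi,\Omega}\leq c\). For \(\psi\in L^{\widetilde{\Phi}}(\Omega;C_{\mathrm{per}}(Y))\), the pointwise bound \(\lvert\psi^{\varepsilon}(x)\rvert\leq\sup_{y}\lvert\psi(x,y)\rvert\) and the Orlicz--H\"older inequality \eqref{eq:Orlicz-Holder} give
\[
\Bigl\lvert\int_{\Omega}v_{\varepsilon}\psi^{\varepsilon}\,dx\Bigr\rvert
\leq 2c\,\lVert \psi\rVert_{L^{\widetilde{\Phi}}(\Omega;C_{\mathrm{per}}(Y))}.
\]
Thus \(L_{\varepsilon}(\psi)=\int_{\Omega}v_{\varepsilon}\psi^{\varepsilon}\,dx\) is a family of linear functionals, uniformly bounded on the Banach space \(\mathcal{X}=L^{\widetilde{\Phi}}(\Omega;C_{\mathrm{per}}(Y))\).

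First, \(\mathcal{X}\) is separable. This holds because \(\widetilde{\Phi}\in\Delta_{2}\) and \(C_{\mathrm{per}}(Y)\) is separable, so simple functions with values in a countable dense subset of \(C_{\mathrm{per}}(Y)\) form a countable dense set. Taking such a countable set \(\{\psi_{j}\}\), a Cantor diagonal extraction gives a subsequence \(E'\subset E\) along which \(L_{\varepsilon}(\psi_{j})\) converges for every \(j\). The uniform bound and an \(\varepsilon/3\) argument then extend convergence to every \(\psi\in\mathcal{X}\). The limit \(L(\psi)\) is linear and satisfies \(\lvert L(\psi)\rvert\leq 2c\lVert\psi\rVert_{\mathcal{X}}\).

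The main step is to identify \(L\) with integration against some \(v_{0}\in L^{\Phi}(\Omega\times Y)\). For this I need the sharper bound
\[
\lvert L(\psi)\rvert\leq 2c\,\lVert\psi\rVert_{L^{\widetilde{\Phi}}(\Omega\times Y)}.
\]
I would prove it from the mean-value property of oscillating traces of Carath\'eodory-type tests. For \(\psi\in C(\overline\Omega;C_{\mathrm{per}}(Y))\), the function \(\widetilde{\Phi}(\lvert\psi^{\varepsilon}\rvert/a)=\widetilde{\Phi}(\lvert\psi\rvert/a)(x,x/\varepsilon)\) has continuous periodic dependence. Hence \(\int_{\Omega}\widetilde{\Phi}(\lvert\psi^{\varepsilon}\rvert/a)\,dx\to\iint\widetilde{\Phi}(\lvert\psi\rvert/a)\,dy\,dx\), and therefore \(\limsup\lVert\psi^{\varepsilon}\rVert_{\widetilde{\Phi},\Omega}\leq\lVert\psi\rVert_{\widetilde{\Phi},\Omega\times Y}\). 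This uses the modular-norm equivalence \eqref{eq:modular-small-norm} for \(\widetilde{\Phi}\), which is \(\Delta_{2}\). Combining this with H\"older gives the sharper bound on this dense class. Since \(C(\overline\Omega;C_{\mathrm{per}}(Y))\) is dense in \(L^{\widetilde{\Phi}}(\Omega\times Y_{\mathrm{per}})\) (the argument is as in Lemma~\ref{lem:smooth-periodic-density}), \(L\) extends to a bounded functional on \(L^{\widetilde{\Phi}}(\Omega\times Y)\). Its values on all of \(\mathcal{X}\) are consistent with this extension because both are continuous for the \(\mathcal{X}\)-norm, which dominates the \(L^{\widetilde{\Phi}}(\Omega\times Y)\)-norm.

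By the duality \((L^{\widetilde{\Phi}})'=L^{\Phi}\), valid since both functions are \(\Delta_{2}\), there is a \(v_{0}\in L^{\Phi}(\Omega\times Y)\) with \(L(\psi)=\iint v_{0}\psi\,dy\,dx\). This is exactly \eqref{eq:weak-two-scale-definition}. The statement with \(\widetilde{\Phi}\) in place of \(\Phi\) follows by the same argument with the roles swapped, using \(\widetilde{\widetilde{\Phi}}=\Phi\).

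The only delicate point is the mean-value limit of \(\widetilde{\Phi}(\lvert\psi^{\varepsilon}\rvert/a)\) and the passage from the modular to the norm. This is routine for continuous tests, which is why I restrict to the dense class before extending.
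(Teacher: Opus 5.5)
The paper gives no proof of its own here; it imports the result from \cite{TachagoNnang2012}. Your proposal is a correct, self-contained version of the standard Nguetseng duality argument: a uniform bound on \(\mathcal{X}=L^{\widetilde{\Phi}}(\Omega;C_{\mathrm{per}}(Y))\), separability and diagonal extraction, the sharpened bound \(\limsup_{\varepsilon}\lVert\psi^{\varepsilon}\rVert_{\widetilde{\Phi},\Omega}\leq\lVert\psi\rVert_{\widetilde{\Phi},\Omega\times Y}\) on continuous tests, then density and the duality \((L^{\widetilde{\Phi}})'=L^{\Phi}\). Two details to tighten: a countable dense subset of \(\mathcal{X}\) needs a countable family of supports as well as countably many values (or just use that the separable space \(C(\overline{\Omega};C_{\mathrm{per}}(Y))\) is dense in \(\mathcal{X}\)), and your consistency step relies on exactly this density in the \(\mathcal{X}\)-norm, which you should state explicitly.
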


On the hole boundaries we use the rescaled surface measure \(d\vartheta_{\varepsilon}=\varepsilon\,d\sigma_{\Gamma_{\varepsilon}}\).  The total masses
\(\vartheta_{\varepsilon}(\Gamma_{\varepsilon})=\varepsilon^{N}\,\#K_{\varepsilon}\,\lvert\partial T\rvert\leq\lvert\Omega\rvert\,\lvert\partial T\rvert\) are uniformly bounded, and the change of variables \(x=\varepsilon(k+y)\) gives, for every \(k\in K_{\varepsilon}\) and every integrable \(f\),
\begin{equation}
\int_{\varepsilon(k+\partial T)}f(x)\,d\vartheta_{\varepsilon}(x)
=\varepsilon^{N}\int_{\partial T}f\bigl(\varepsilon(k+y)\bigr)\,d\sigma_{y}.
\label{eq:one-cell-surface-Riemann}
\end{equation}

\subsection{Cell inequalities, uniform extension and nonlinear surface compactness}
\label{subsec:cell-inequalities}

For later use, define
\begin{equation}
\Omega_{\varepsilon}^{\mathrm{cell}}
=
\bigcup_{k\in K_{\varepsilon}}\varepsilon(k+Y),
\qquad
R_{\varepsilon}
=
\Omega\setminus\overline{\Omega_{\varepsilon}^{\mathrm{cell}}};
\label{eq:boundary-layer}
\end{equation}
Since \(\Omega\) is Lipschitz, the boundary layer has thickness at most \(O(\varepsilon)\), and hence
\begin{equation}
\lvert R_{\varepsilon}\rvert\longrightarrow0
\quad\text{as }\varepsilon\longrightarrow0.
\label{eq:boundary-layer-small}
\end{equation}

We now prove the inequalities that make all later constants independent of
\(\varepsilon\).  Standard Lipschitz extension results are available in
\cite{AcerbiChiadoPiatDalMasoPercivale1992,CioranescuDonato1999}; the modular proofs below are included to show where
\((H_1)\) is used.

\begin{lemma}
\label{lem:cell-average}
There is \(C_{Y}>0\) such that, for every
\(W\in W^{1}L^{\Phi}(Y)\),
\begin{equation}
\int_{Y}
\Phi
\left(
\left|
W(y)
-
\frac{1}{\lvert Y\rvert}\int_{Y}W(z)\,dz
\right|
\right)\,dy
\leq
C_{Y}
\int_{Y}
\sum_{i=1}^{N}
\Phi(C_{Y}\lvert D_{y_{i}}W\rvert)\,dy.
\label{eq:cell-average-inequality}
\end{equation}
\end{lemma}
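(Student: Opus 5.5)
The plan is to prove \eqref{eq:cell-average-inequality} directly by a Jensen-type averaging argument along coordinate paths in the cube, in the same spirit as the proof of \eqref{eq:Orlicz-Poincare}. By density of smooth functions in \(W^{1}L^{\Phi}(Y)\) under \(\Delta_{2}\) \cite{Gossez1982}, together with continuity of both sides of \eqref{eq:cell-average-inequality} under norm convergence (by \eqref{eq:modular-small-norm}, \eqref{eq:fixed-scaling} and Lemma~\ref{lem:nonlinear-modular-continuity}(b) applied with \(\mathcal{G}(z,\xi)=\Phi(C_{Y}\lvert\xi_{i}\rvert)\)), it suffices to treat \(W\in C^{1}(\overline{Y})\). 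Recall \(\lvert Y\rvert=1\). The first step is to use Jensen's inequality for the convex \(\Phi\) to replace the deviation from the mean by a double integral:
\[
\Phi\Bigl(\Bigl|W(y)-\int_{Y}W(z)\,dz\Bigr|\Bigr)\le\int_{Y}\Phi(\lvert W(y)-W(z)\rvert)\,dz .
\]
This reduces the claim to bounding \(\iint_{Y\times Y}\Phi(\lvert W(y)-W(z)\rvert)\,dy\,dz\).

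Next, I would connect \(z\) to \(y\) by the staircase path that changes one coordinate at a time. Set \(y^{(0)}=z\) and \(y^{(j)}=(y_{1},\dots,y_{j},z_{j+1},\dots,z_{N})\), so that \(y^{(N)}=y\). Then \(\lvert W(y)-W(z)\rvert\le\sum_{j=1}^{N}\lvert W(y^{(j)})-W(y^{(j-1)})\rvert\), and by convexity of \(\Phi\) with weights \(1/N\),
\[
\Phi(\lvert W(y)-W(z)\rvert)\le\frac1N\sum_{j=1}^{N}\Phi\bigl(N\lvert W(y^{(j)})-W(y^{(j-1)})\rvert\bigr).
\]
Each increment is a one-dimensional integral of \(D_{y_{j}}W\) over a segment of length at most \(1\). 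Jensen on that segment gives
\[
\Phi\bigl(N\lvert W(y^{(j)})-W(y^{(j-1)})\rvert\bigr)\le\int_{0}^{1}\Phi\bigl(N\lvert D_{y_{j}}W(\dots,t,\dots)\rvert\bigr)\,dt .
\]
Here \(t\) sits in the \(j\)-th slot, the slots before \(j\) carry \(y_{1},\dots,y_{j-1}\), and the slots after carry \(z_{j+1},\dots,z_{N}\).

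Finally, I integrate over \((y,z)\in Y\times Y\). The right-hand side depends on only \(N\) of the \(2N\) variables, namely \((y_{1},\dots,y_{j-1},t,z_{j+1},\dots,z_{N})\). Fubini then shows that the integral of the \(j\)-th term equals \(\int_{Y}\Phi(N\lvert D_{y_{j}}W\rvert)\,dy\), since the remaining variables each range over \((0,1)\). Summing over \(j\) yields \eqref{eq:cell-average-inequality} with \(C_{Y}=N\), or any \(C_{Y}\ge N\) by monotonicity of \(\Phi\).

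The main point needing care is the density step. The inequality holds for smooth \(W\) up to the boundary of \(Y\), and on the Lipschitz cube \(C^{\infty}(\overline{Y})\) is dense in \(W^{1}L^{\Phi}(Y)\). Passing to the limit requires modular convergence of both sides, and this follows from \(\Delta_{2}\) via \eqref{eq:modular-small-norm} and \eqref{eq:fixed-scaling}, since the mean \(\int_{Y}W\) also converges. No compactness argument is needed, so the constant is explicit and depends only on \(N\).
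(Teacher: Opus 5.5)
Your argument is correct and is essentially the paper's proof. Both change one coordinate at a time, apply the one-dimensional fundamental theorem of calculus, convexity with weights $1/N$ and Jensen's inequality, and conclude by smooth density, arriving at the same bound $\frac1N\sum_{j=1}^{N}\int_Y\Phi(N\lvert D_{y_j}W\rvert)\,dy$. The only difference is bookkeeping: you first pass to the two-point integral $\int_Y\int_Y\Phi(\lvert W(y)-W(z)\rvert)\,dz\,dy$ and follow a staircase path, whereas the paper telescopes $W-\int_Y W$ through the partial averaging operators $M_1\cdots M_j$ and uses their contractivity for the modular.
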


\begin{proof}
For \(1\le j\le N\), let \(M_j\) average a function over its \(j\)-th coordinate on \((0,1)\), leaving the others fixed. These operators commute and are contractions for the Orlicz modular by Jensen's inequality. For \(W\in C^1(\overline Y)\), the one-dimensional fundamental theorem of calculus gives
\begin{align*}
|(I-M_j)W(y)|
&\le \int_0^1\int_{\min\{s,y_j\}}^{\max\{s,y_j\}}
|D_{y_j}W(y_1,\ldots,t,\ldots,y_N)|\,dt\,ds\\
&\le \int_0^1|D_{y_j}W(y_1,\ldots,t,\ldots,y_N)|\,dt
=M_j|D_{y_j}W|(y).
\end{align*}
Since \(|Y|=1\), the telescoping identity
\[
W-\int_Y W\,dy
=\sum_{j=1}^N M_1\cdots M_{j-1}(I-M_j)W
\]
therefore implies
\[
\left|W-\int_Y W\,dy\right|
\le\sum_{j=1}^N M_1\cdots M_j|D_{y_j}W|.
\]
Convexity and Jensen's inequality, first for the sum and then for the successive averages, yield
\begin{align*}
\int_Y\Phi\!\left(\left|W-\int_Y W\,dy\right|\right)dy
&\le \frac1N\sum_{j=1}^N\int_Y
 \Phi\!\left(NM_1\cdots M_j|D_{y_j}W|\right)dy\\
&\le\frac1N\sum_{j=1}^N\int_Y\Phi(N|D_{y_j}W|)\,dy.
\end{align*}
The fixed-scaling inequality \eqref{eq:fixed-scaling} gives \eqref{eq:cell-average-inequality}. Finally, smooth density in \(W^1L^\Phi(Y)\), valid under \(\Delta_2\) \cite{Gossez1982}, extends the estimate to every \(W\) in the stated space.
\end{proof}

\begin{lemma}
\label{lem:Lipschitz-Poincare}
Let \(D\subset\mathbb R^N\) be a bounded connected Lipschitz domain
and let \(B\Subset D\) be a ball. There is \(C_{D,B}>0\) such that,
for every \(w\in W^1L^\Phi(D)\), with \(w_B=|B|^{-1}\int_Bw\,dy\),
\begin{equation}
\int_D\Phi(|w-w_B|)\,dy
\le C_{D,B}\int_D\sum_{i=1}^N
 \Phi(C_{D,B}|D_{y_i}w|)\,dy.
\label{eq:Lipschitz-Poincare}
\end{equation}
\end{lemma}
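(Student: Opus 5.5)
We prove the modular Poincaré inequality \eqref{eq:Lipschitz-Poincare} by a contradiction and compactness argument, in the spirit of the classical Poincaré–Wirtinger proof. The point is to apply it at the level of \emph{modulars at fixed scales*}, using \eqref{eq:fixed-scaling} and \eqref{eq:modular-small-norm} to pass between modulars and Luxemburg norms. The first step is a norm version. We show that there is \(C>0\) with
\[
\lVert w-w_{B}\rVert_{\Phi,D}\leq C\sum_{i=1}^{N}\lVert D_{y_{i}}w\rVert_{\Phi,D}
\qquad\text{for every }w\in W^{1}L^{\Phi}(D).
\]
If this fails, there are \(w_{n}\) with \(w_{n,B}=0\), \(\lVert w_{n}\rVert_{\Phi,D}=1\) and \(\sum_{i}\lVert D_{y_{i}}w_{n}\rVert_{\Phi,D}\to0\). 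Then \((w_{n})\) is bounded in \(W^{1}L^{\Phi}(D)\). By reflexivity and the compact embedding \eqref{eq:compact-volume-embedding} (valid on the bounded Lipschitz domain \(D\)), a subsequence converges weakly in \(W^{1}L^{\Phi}(D)\) and strongly in \(L^{\Phi}(D)\) to some \(w\). The limit has \(\nabla w=0\) by weak lower semicontinuity, so it is constant because \(D\) is connected. Since \(L^{\Phi}(D)\hookrightarrow L^{1}(D)\), the averages pass to the limit and \(w_{B}=0\), so \(w=0\). This contradicts \(\lVert w\rVert_{\Phi,D}=1\).

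The second step converts the norm inequality into the modular form, and this is where care is needed. The difficulty is that \eqref{eq:modular-small-norm} involves the two exponents \(p^{\pm}\), so a homogeneous norm inequality does not directly give a modular one with a constant inside \(\Phi\). We therefore rerun the contradiction argument directly at the modular level, exploiting that both sides scale comparably.

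Suppose no constant \(C_{D,B}\) works. Then for each \(n\) there is \(w_{n}\), which we may take with \(w_{n,B}=0\), such that
\[
\int_{D}\Phi(|w_{n}|)\,dy> n\int_{D}\sum_{i}\Phi(n|D_{y_{i}}w_{n}|)\,dy.
\]
Set \(t_{n}=\lVert w_{n}\rVert_{\Phi,D}\) and \(v_{n}=w_{n}/t_{n}\). We then split into two cases.

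\emph{Case \(t_{n}\le 1\).} Using \eqref{eq:scaling-small} on the left, \(\int\Phi(|w_{n}|)\le t_{n}^{p^{-}}\). On the right, writing \(n|D w_{n}|=(nt_{n})|D v_{n}|\), we get the lower bound
\[
\int\Phi(n|D w_{n}|)\ge \min\{(nt_{n})^{p^{-}},(nt_{n})^{p^{+}}\}\int\Phi(|D v_{n}|),
\]
by \eqref{eq:scaling-large}–\eqref{eq:scaling-small}.

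\emph{Case \(t_{n}\ge 1\).} Here \(\int\Phi(|w_{n}|)\le t_{n}^{p^{+}}\), and \(\Phi(nt_{n}s)\ge n^{p^{-}}t_{n}^{p^{+}}\Phi(s)\) when \(nt_{n}\ge1\). The latter follows from \(\Phi(ab s)\ge a^{p^{-}}\Phi(bs)\ge a^{p^{-}}b^{p^{+}}\)… (only for \(b\le1\)). For \(t_{n}\ge1\) we use instead the scaling \(\Phi(t_{n}s)\ge t_{n}^{p^{-}}\Phi(s)\) together with the upper bound \(t_{n}^{p^{+}}\).

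The mismatch of exponents between these bounds is the main obstacle. To avoid it, we handle the scales with the inner constant. The inequality
\[
\Phi(n t s)\ge n^{p^{-}}\Phi(ts)\ge n^{p^{-}}\cdot\bigl(\text{ratio }\Phi(ts)/\Phi(t)\bigr)\Phi(t)
\]
alone does not help. The cleaner route is to note that it suffices to prove, for each fixed \(\tau>0\), the inequality on the set \(\{\lVert w-w_{B}\rVert_{\Phi,D}=\tau\}\) with constants uniform in \(\tau\). This we obtain by applying the norm inequality of the first step to \(w/\tau\), combined with \eqref{eq:modular-small-norm} at norm level one. Concretely, let \(\lVert w-w_B\rVert=\tau\). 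Then \(\lVert D w/\tau\rVert\ge 1/C\), so \(\lVert C D w/\tau\rVert\ge1\), and hence \(\int\Phi(C|Dw|/\tau)\ge1=\int\Phi(|w-w_B|/\tau)\). We then need to remove the factor \(1/\tau\) on both sides simultaneously.

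This is done by a one-parameter homogeneity trick. We replace \(\Phi\) by \(\Phi_{\tau}(s)=\Phi(\tau s)/\Phi(\tau)\). These are \(N\)-functions with the same indices \(p^{\pm}\), and the constant of the first step is uniform over this family: rerunning the compactness argument with \(\tau\) varying, using \eqref{eq:scaling-large}–\eqref{eq:scaling-small}, the \(\Phi_{\tau}\)-modulars are uniformly comparable to powers between \(p^{-}\) and \(p^{+}\), and the embedding constants are controlled uniformly. If this uniformity proves too delicate, the fallback is to prove \eqref{eq:Lipschitz-Poincare} directly. We cover \(D\) by finitely many Lipschitz cubes/chains connected to \(B\), apply Lemma~\ref{lem:cell-average} on each bi-Lipschitz image of a cube, and propagate averages along chains of overlapping cubes. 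The difference of averages on overlapping pieces is bounded by Jensen's inequality through the modular of \(w\) minus the local average on the union. This gives \eqref{eq:Lipschitz-Poincare} with constants depending only on the chain geometry and on \(C(c)\) in \eqref{eq:fixed-scaling}, with no exponent mismatch.
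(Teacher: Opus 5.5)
Your primary route has a genuine gap, but your fallback is correct and is essentially the paper's proof.

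\textbf{The gap in the primary route.} Your first step, the norm inequality by contradiction and compactness, is fine. The $\Phi_\tau$ device is also a legitimate reduction once it is normalized correctly. For $u=w-w_B\neq0$, take $\tau=\Phi^{-1}\bigl(\int_D\Phi(|u|)\,dy\bigr)$, not $\tau=\lVert u\rVert_{\Phi,D}$, so that $\int_D\Phi_\tau(|u|/\tau)\,dy=1$. Then a bound $\lVert v\rVert_{\Phi_\tau,D}\le K\sum_i\lVert D_{y_i}v\rVert_{\Phi_\tau,D}$ for $v_B=0$, with $K$ independent of $\tau$, gives $\int_D\Phi(|u|)\,dy\le\sum_i\int_D\Phi(NK|D_{y_i}u|)\,dy$.

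The gap is precisely this $\tau$-uniformity, which you only assert. It is not a by-product of the compactness argument. By the identity $\Phi(\tau s)=\Phi(\tau)\Phi_\tau(s)$, uniformity over $\{\Phi_\tau\}$ is \emph{equivalent} to \eqref{eq:Lipschitz-Poincare}, so the reduction has not removed any difficulty.

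Rerunning the contradiction argument with $\tau_n$ varying also does not close. The $\tau$-uniform information you have is essentially $\min\{s^{p^-},s^{p^+}\}\le\Phi_\tau(s)\le\max\{s^{p^-},s^{p^+}\}$. So from $(v_n)_B=0$, $\int_D\Phi_{\tau_n}(|v_n|)\,dy=1$ and $\int_D\Phi_{\tau_n}(|\nabla v_n|)\,dy\to0$ you extract only a $W^{1,p^-}$ bound and $v_n\to0$ in $L^{p^-}$. This yields a contradiction when $p^-\ge N$ or $p^+<Np^-/(N-p^-)$, but not in general. In general you would need uniform integrability of $\Phi_{\tau_n}(|v_n|)$, that is, a compact embedding uniform over the family $\{\Phi_\tau\}$. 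That is a modular statement of the same type as the one you are trying to prove. The earlier case analysis on $t_n\le1$, $t_n\ge1$ is inconclusive, as you noticed, and should be dropped.

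\textbf{The fallback.} Your fallback is correct and matches the paper's proof, so make it the proof rather than the reserve. The steps are:
\begin{enumerate}
\item Cover $D$ by finitely many bi-Lipschitz images $V_\ell=\Theta_\ell(Y)$ of the unit cube (interior cubes and Lipschitz boundary cylinders), with $\overline B\subset V_1$.
\item Apply Lemma~\ref{lem:cell-average} to $w\circ\Theta_\ell$ to get a patch inequality with $m_\ell=\int_Y w\circ\Theta_\ell\,dt$.
\item Use connectedness of $D$ to make the graph of overlaps $|V_\ell\cap V_j|>0$ connected.
\item Bound $\Phi(|m_\ell-m_j|/2)$ by averaging the pointwise convexity bound over the intersection $V_\ell\cap V_j$, which has positive measure. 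This is cleaner than working on the union.
\item Bound $\Phi(|w_B-m_1|)$ by Jensen on $B$, and chain along a path from $1$ to $\ell$.
\item Conclude with $\Phi(|w-w_B|)\le\frac12\Phi(2|w-m_\ell|)+\frac12\Phi(2|m_\ell-w_B|)$ summed over $\ell$, absorbing all dilation constants with \eqref{eq:fixed-scaling}.
\end{enumerate}
Every estimate here is modular, so the constant inside $\Phi$ comes out directly, with no exponent mismatch. Neither the norm inequality of your first step nor any compact embedding is needed.
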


\begin{proof}
Cover \(D\) by finitely many open sets \(V_{1},\ldots,V_{M}\subset D\), each bi-Lipschitz homeomorphic to \(Y\) through a map \(\Theta_{\ell}:Y\to V_{\ell}\), with \(\overline B\subset V_{1}\): interior cubes and, near \(\partial D\), intersections of \(D\) with Lipschitz coordinate cylinders.  Since \(D\) is connected, the graph joining \(\ell\) and \(j\) when \(\lvert V_{\ell}\cap V_{j}\rvert>0\) is connected.  With \(m_{\ell}=\int_{Y}w(\Theta_{\ell}(t))\,dt\), Lemma~\ref{lem:cell-average} applied to \(w\circ\Theta_{\ell}\), the Jacobian bounds and \eqref{eq:fixed-scaling} give
\begin{equation}
\int_{V_\ell}\Phi(|w-m_\ell|)\,dy
 \le C\int_{V_\ell}\sum_{i=1}^N\Phi(C|D_{y_i}w|)\,dy,\qquad \ell=1,\ldots,M.
\label{eq:patch-Poincare}
\end{equation}
If \(\lvert V_{\ell}\cap V_{j}\rvert>0\), convexity and Jensen's inequality bound \(\Phi(|m_{\ell}-m_{j}|/2)\) by \((2\lvert V_{\ell}\cap V_{j}\rvert)^{-1}\) times the sum of the left-hand sides of \eqref{eq:patch-Poincare} for \(\ell\) and \(j\); similarly \(\Phi(|w_{B}-m_{1}|)\leq|B|^{-1}\int_{V_{1}}\Phi(|w-m_{1}|)\,dy\).  Following a path from \(1\) to \(\ell\) in the overlap graph bounds \(\Phi(|m_{\ell}-w_{B}|)\) by a constant times the right-hand side of \eqref{eq:Lipschitz-Poincare}.  The inequality \(\Phi(|w-w_{B}|)\leq\frac12\Phi(2|w-m_{\ell}|)+\frac12\Phi(2|m_{\ell}-w_{B}|)\) on each \(V_{\ell}\), summed over \(\ell=1,\ldots,M\), proves \eqref{eq:Lipschitz-Poincare}; the constants depend only on \(D\), \(B\) and the cover.  The norm inequality follows from \eqref{eq:modular-small-norm}.
\end{proof}

\begin{lemma}
\label{lem:cell-extension-trace}
There are a linear operator
\[
P:
W^{1}L^{\Phi}(Y^{*})
\longrightarrow
W^{1}L^{\Phi}(Y)
\]
and a constant \(C_{T}>0\) such that, for every \(w\in W^{1}L^{\Phi}(Y^{*})\),
\[
Pw=w\quad\text{in }Y^{*},
\qquad
P1=1,
\]
and
\begin{align}
\int_{Y}\Phi(\lvert Pw\rvert)\,dy
&\leq
C_{T}
\int_{Y^{*}}\Phi(C_{T}\lvert w\rvert)\,dy,
\label{eq:cell-extension-zero}
\\
\int_{Y}
\sum_{i=1}^{N}
\Phi(\lvert D_{y_{i}}Pw\rvert)\,dy
&\leq
C_{T}
\int_{Y^{*}}
\sum_{i=1}^{N}
\Phi(C_{T}\lvert D_{y_{i}}w\rvert)\,dy.
\label{eq:cell-extension-one}
\end{align}
Moreover, for every \(w\in W^{1}L^{\Phi}(Y^{*})\) and every \(W\in W^{1}L^{\Phi}(Y)\),
\begin{align}
\int_{\partial T}
\Phi(\lvert\gamma_{0}w\rvert)\,d\sigma_{y}
&\leq
C_{T}
\int_{Y^{*}}
\left[
\Phi(C_{T}\lvert w\rvert)
+
\sum_{i=1}^{N}
\Phi(C_{T}\lvert D_{y_{i}}w\rvert)
\right]\,dy,
\label{eq:cell-trace}
\\
\int_{\partial T}
\Phi
\left(
\left|
\gamma_{0}W
-
\int_{Y}W(z)\,dz
\right|
\right)\,d\sigma_{y}
&\leq
C_{T}
\int_{Y}
\sum_{i=1}^{N}
\Phi(C_{T}\lvert D_{y_{i}}W\rvert)\,dy,
\label{eq:cell-trace-Poincare}
\end{align}
where \(\gamma_{0}W\) is the trace of \(W\) on \(\partial T\).
\end{lemma}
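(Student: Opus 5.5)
The extension operator \(P\) is built by the classical reflection/cut-off construction (as in \cite{AcerbiChiadoPiatDalMasoPercivale1992,CioranescuDonato1999}), adapted to the modular. Since \(\partial T\) is \(C^{1,1}\) and \(\overline T\subset Y\), I fix a neighbourhood \(U\) of \(\partial T\) with \(\overline U\subset Y\) and a bi-Lipschitz reflection \(\mathcal R\) across \(\partial T\). For \(z\in T\) near \(\partial T\), write \(z=\pi(z)-s\nu(\pi(z))\) and map it to \(\pi(z)+s\nu(\pi(z))\in Y^{*}\), where \(\pi\) is the nearest-point projection. Choose a cut-off \(\theta\in C_c^\infty(Y)\) equal to \(1\) near \(\overline T\) and supported in \(\overline T\cup U\). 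Since \(T\) need not be connected or a ball, I work on the collar \(D=U\cap Y^{*}\), which I take connected or else treat componentwise (one component per component of \(\partial T\)). Let \(m(w)\) be the mean of \(w\) over a ball \(B\Subset D\). Then set
\[
Pw=w \ \text{in } Y^{*},\qquad
Pw=m(w)+\theta\,\bigl[(w-m(w))\circ\mathcal R\bigr] \ \text{in } T .
\]
On the collar near \(\partial T\) we have \(\theta=1\), so \(Pw\) is continuous across \(\partial T\) in the trace sense and lies in \(W^{1}L^{\Phi}(Y)\). Linearity is clear, and \(P1=1\) because \(w-m(w)=0\) when \(w\equiv1\).

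For the modular estimates, change variables with the bi-Lipschitz Jacobian bounds. Using \eqref{eq:fixed-scaling} and the convexity splitting \(\Phi(a+b)\le\frac12\Phi(2a)+\frac12\Phi(2b)\), I get \eqref{eq:cell-extension-zero} directly: Jensen gives \(\Phi(|m(w)|)\le|B|^{-1}\int_B\Phi(|w|)\). For \eqref{eq:cell-extension-one}, the derivative in \(T\) is
\[
\nabla\theta\,(w-m(w))\circ\mathcal R+\theta\,(D\mathcal R)^{T}(\nabla w)\circ\mathcal R .
\]
The first term is the only place where the mean matters. Lemma~\ref{lem:Lipschitz-Poincare} on \(D\) bounds \(\int_D\Phi(|w-m(w)|)\) by the gradient modular, and this gives \eqref{eq:cell-extension-one} together with \eqref{eq:vector-component-modular-equivalence}. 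I expect the main obstacle to be this gradient estimate without any zero-order term. It works only because we subtract \(m(w)\) and use a Poincaré inequality on a fixed connected collar. If the collar has several components, the subtraction must be done per component, with \(\theta\) split accordingly; this is possible since the components of \(\partial T\) have disjoint neighbourhoods.

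The trace bound \eqref{eq:cell-trace} is Proposition~\ref{prop:compact-Orlicz-trace} with \(D=Y^{*}\), \(\Sigma=\partial T\) and a fixed \(\delta\). For \eqref{eq:cell-trace-Poincare}, write \(\overline W=\int_Y W\). Apply \eqref{eq:small-parameter-trace} with fixed \(\delta\) to \(W-\overline W\), restricted to \(Y^{*}\) (the outer collar lies in \(Y^{*}\)). This bounds the left-hand side by
\[
C\int_Y\Phi(C|W-\overline W|)+C\int_Y\Phi(|\nabla W|).
\]
Lemma~\ref{lem:cell-average} then controls the first term, applied to \(CW\) so that the constant enters inside the argument, and \eqref{eq:fixed-scaling} absorbs the remaining constants into \(C_T\).
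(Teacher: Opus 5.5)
Your argument is essentially the paper's. You reflect $w-m(w)$ across $\partial T$, cut off inside the hole, and subtract the mean of $w$ over a fixed ball so that the $\nabla\theta$-term is controlled by the modular Poincar\'e inequality of Lemma~\ref{lem:Lipschitz-Poincare}. The two trace estimates are derived exactly as in the paper, from \eqref{eq:small-parameter-trace} at a fixed $\delta$ together with Lemma~\ref{lem:cell-average}.

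One point must be corrected: $\theta$ cannot be equal to $1$ near $\overline T$. With $\theta\equiv1$ on $T$, your formula needs $\mathcal R$ to be a bi-Lipschitz map of all of $T$ into $Y^{*}$, which fails in general. For a ball of radius $r$, the nearest-point projection is undefined at the centre, and the radial reflection $\rho\mapsto 2r-\rho$ has unbounded tangential stretching near it. Moreover, $\nabla\theta$ would then vanish in $T$, which contradicts your own remark that the $\nabla\theta$-term is where the mean is needed. Take instead $\theta\in C_c^\infty(U)$ with $\theta=1$ on a neighbourhood of $\partial T$. Here $U=\{\operatorname{dist}(\cdot,\partial T)<\delta\}$, with $\delta$ smaller than the reach of $\partial T$ (positive for a compact $C^{1,1}$ surface) and smaller than $\operatorname{dist}(\partial T,\partial Y)$. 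Then $Pw=m(w)$ deep inside $T$, which is exactly the structure $Pw=c(w)+P_{0}(w-c(w))$ used in the paper. Note also that the paper's $\nu$ points into $T$, so your signs are reversed. Writing $\mathcal R(z)=2\pi(z)-z$ avoids the convention issue and makes the Lipschitz bound evident, since $\pi$ is Lipschitz on $U$.

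The remaining differences are minor. The paper reflects locally across Lipschitz graphs, $R_j(y',y_N)=(y',2g_j(y')-y_N)$, glued by a partition of unity; this needs only a Lipschitz boundary, whereas your single global reflection uses the $C^{1,1}$ hypothesis. The paper averages over one ball $B\Subset Y^{*}$ and applies Lemma~\ref{lem:Lipschitz-Poincare} on all of $Y^{*}$, which is connected by $(H_0)$; this removes your componentwise bookkeeping. Your collar version is more local, but it requires checking two things. Each collar component must be a connected Lipschitz domain, and each component of $T$ must have connected boundary. Both hold here because $Y^{*}$ is connected.
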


\begin{proof}
Choose finitely many open cylinders \(U_{1},\ldots,U_{J}\), with closures in \(Y\), covering \(\partial T\), in which a rigid change of variables represents \(Y^{*}\cap U_{j}\) as \(\{(y',y_{N}):y_{N}>g_{j}(y')\}\cap U_{j}\) with \(g_{j}\) Lipschitz.  The reflections
\(R_{j}(y',y_{N})=\bigl(y',2g_{j}(y')-y_{N}\bigr)\)
are bi-Lipschitz.  Choose \(\theta_{j}\in C_{c}^{\infty}(U_{j})\), \(0\leq\theta_{j}\leq1\), with \(\sum_{j=1}^{J}\theta_{j}=1\) on a neighbourhood of \(\partial T\); after shrinking their supports, \(R_{j}\) maps \(T\cap\operatorname{supp}\theta_{j}\) into \(Y^{*}\cap U_{j}\).  For \(g\in C^{\infty}(\overline{Y^{*}})\) set
\[
P_{0}g=g\ \text{in }Y^{*},
\qquad
P_{0}g=\sum_{j=1}^{J}\theta_{j}\,(g\circ R_{j})\ \text{in }T.
\]
Since \(R_{j}\) is the identity on \(\partial T\) and \(\sum_{j}\theta_{j}=1\) there, the two definitions have the same trace on \(\partial T\), so \(P_{0}g\in W^{1}L^{\Phi}(Y)\).  Changes of variables, the chain rule, convexity and \eqref{eq:fixed-scaling} give
\begin{equation}
\int_{T}\Bigl[\Phi(\lvert P_{0}g\rvert)+\sum_{i=1}^{N}\Phi(\lvert D_{y_{i}}P_{0}g\rvert)\Bigr]dy
\leq C\int_{Y^{*}}\Bigl[\Phi(C\lvert g\rvert)+\sum_{i=1}^{N}\Phi(C\lvert D_{y_{i}}g\rvert)\Bigr]dy.
\label{eq:P0-estimate}
\end{equation}
Fix a ball \(B\Subset Y^{*}\) at positive distance from \(\overline{T}\cup\partial Y\), let \(c(w)\) be the average of \(w\) on \(B\), and define
\[
Pw=c(w)+P_{0}\bigl(w-c(w)\bigr).
\]
The operator \(P\) is linear, \(Pw=w\) in \(Y^{*}\), and \(P1=1\).  Applying \eqref{eq:P0-estimate} to \(g=w-c(w)\) and Lemma~\ref{lem:Lipschitz-Poincare} on the connected Lipschitz set \(Y^{*}\) proves \eqref{eq:cell-extension-one}; Jensen's inequality \(\Phi(\lvert c(w)\rvert)\leq\lvert B\rvert^{-1}\int_{B}\Phi(\lvert w\rvert)\,dy\) and \eqref{eq:P0-estimate} prove \eqref{eq:cell-extension-zero}.  Smooth density in \(W^{1}L^{\Phi}(Y^{*})\) under \(\Delta_{2}\) \cite{Gossez1982} extends \(P\) to the whole space.

Estimate \eqref{eq:cell-trace} is \eqref{eq:small-parameter-trace} with \(D=Y^{*}\), \(\Sigma=\partial T\) and a fixed \(\delta\in(0,\delta_{0})\).  Finally, for \(W\in W^{1}L^{\Phi}(Y)\), set \(m=\int_{Y}W\,dz\) (recall \(\lvert Y\rvert=1\)) and apply \eqref{eq:cell-trace} to the restriction of \(W-m\) to \(Y^{*}\).  Lemma~\ref{lem:cell-average} bounds \(\int_{Y^{*}}\Phi(C\lvert W-m\rvert)\,dy\) by \(C\int_{Y}\sum_{i=1}^{N}\Phi(C\lvert D_{y_{i}}W\rvert)\,dy\), and the gradient term of \eqref{eq:cell-trace} is bounded trivially.
  This proves \eqref{eq:cell-trace-Poincare}.
\end{proof}

\begin{proposition}
\label{prop:uniform-extension}
There are linear operators
\[
P_{\varepsilon}:
X_{\varepsilon}
\longrightarrow
W_{0}^{1}L^{\Phi}(\Omega)
\]
and a constant \(C>0\), independent of \(\varepsilon\), such that, for every \(v\in X_{\varepsilon}\),
\[
P_{\varepsilon}v=v
\quad\text{almost everywhere in }\Omega_{\varepsilon},
\]
and
\begin{align}
\int_{\Omega}
\Phi(\lvert P_{\varepsilon}v\rvert)\,dx
&\leq
C
\int_{\Omega_{\varepsilon}}
\Phi(C\lvert v\rvert)\,dx,
\label{eq:uniform-extension-zero}
\\
\int_{\Omega}
\sum_{i=1}^{N}
\Phi(\lvert D_{x_{i}}P_{\varepsilon}v\rvert)\,dx
&\leq
C
\int_{\Omega_{\varepsilon}}
\sum_{i=1}^{N}
\Phi(C\lvert D_{x_{i}}v\rvert)\,dx.
\label{eq:uniform-extension-one}
\end{align}
\end{proposition}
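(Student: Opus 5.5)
We construct \(P_{\varepsilon}\) cell by cell from the operator \(P\) of Lemma~\ref{lem:cell-extension-trace}, using the fact that only complete cells are perforated.

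\textbf{Definition.} For \(v\in X_{\varepsilon}\) and \(k\in K_{\varepsilon}\), put \(w_{k}(y)=v(\varepsilon(k+y))\) for \(y\in Y^{*}\). Then \(w_{k}\in W^{1}L^{\Phi}(Y^{*})\). Define
\[
P_{\varepsilon}v(x)=(Pw_{k})\bigl(x/\varepsilon-k\bigr)\quad\text{for }x\in\varepsilon(k+Y),\ k\in K_{\varepsilon},
\]
and \(P_{\varepsilon}v=v\) on \(R_{\varepsilon}\), which contains no holes. Linearity is inherited from \(P\), and \(P_{\varepsilon}v=v\) on \(\Omega_{\varepsilon}\) because \(Pw=w\) in \(Y^{*}\).

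\textbf{Membership in \(W_{0}^{1}L^{\Phi}(\Omega)\).} The key point is that \(P\) modifies \(w\) only inside \(T\). In the construction, \(Pw=c(w)+P_{0}(w-c(w))\) coincides with \(w\) on \(Y^{*}\), and \(\overline{T}\Subset Y\). Hence \(P_{\varepsilon}v\) agrees with \(v\) on a neighbourhood of each cell boundary \(\varepsilon(k+\partial Y)\). It follows that no jump of traces is created across cell faces, and \(P_{\varepsilon}v\) is weakly differentiable on \(\Omega\), with gradient equal to \(\nabla v\) off \(T_{\varepsilon}\) and to the rescaled \(\nabla Pw_{k}\) in each hole. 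The boundary value on \(\partial\Omega\) is untouched, since holes stay away from \(\partial\Omega\). So \(P_{\varepsilon}v\) lies in \(W^{1}L^{\Phi}(\Omega)\) with zero trace, hence in \(W_{0}^{1}L^{\Phi}(\Omega)\) on the Lipschitz domain \(\Omega\); under \(\Delta_{2}\) this identification is standard via density.

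\textbf{Zero-order estimate.} By the change of variables \(x=\varepsilon(k+y)\) and \eqref{eq:cell-extension-zero},
\[
\int_{\varepsilon(k+Y)}\Phi(|P_{\varepsilon}v|)\,dx=\varepsilon^{N}\int_{Y}\Phi(|Pw_{k}|)\,dy\le C_{T}\varepsilon^{N}\int_{Y^{*}}\Phi(C_{T}|w_{k}|)\,dy=C_{T}\int_{\varepsilon(k+Y^{*})}\Phi(C_{T}|v|)\,dx.
\]
Summing over \(k\) and adding the trivial estimate on \(R_{\varepsilon}\) (using \(C\ge1\) and monotonicity of \(\Phi\)) gives \eqref{eq:uniform-extension-zero}.

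\textbf{Gradient estimate.} This is the step needing care: the \(\varepsilon\)-scaling must cancel exactly. Here \(D_{y_{i}}w_{k}(y)=\varepsilon\,(D_{x_{i}}v)(\varepsilon(k+y))\), so \(D_{x_{i}}P_{\varepsilon}v=\varepsilon^{-1}D_{y_{i}}Pw_{k}\). Since \(\Phi\) is not homogeneous, we cannot pull out \(\varepsilon\) from \eqref{eq:cell-extension-one} applied to \(w_{k}\) directly. Instead, apply \eqref{eq:cell-extension-one} to the rescaled function \(\tilde w_{k}=\varepsilon^{-1}w_{k}\). By linearity \(P\tilde w_{k}=\varepsilon^{-1}Pw_{k}\), and \(D_{y_{i}}\tilde w_{k}=(D_{x_{i}}v)(\varepsilon(k+y))\). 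This is legitimate precisely because \eqref{eq:cell-extension-one} involves only gradients on both sides; its proof subtracts the mean \(c(w)\), so no zero-order term appears. We then get
\[
\int_{\varepsilon(k+Y)}\sum_{i}\Phi(|D_{x_{i}}P_{\varepsilon}v|)\,dx=\varepsilon^{N}\int_{Y}\sum_{i}\Phi(|D_{y_{i}}P\tilde w_{k}|)\,dy\le C_{T}\int_{\varepsilon(k+Y^{*})}\sum_{i}\Phi(C_{T}|D_{x_{i}}v|)\,dx.
\]
Summing over cells and adding the contribution of \(R_{\varepsilon}\) yields \eqref{eq:uniform-extension-one} with \(C=C_{T}\) (or \(\max\{1,C_{T}\}\)), independent of \(\varepsilon\).
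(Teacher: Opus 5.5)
Your proposal is correct and follows the paper's proof essentially step for step. It uses the same cellwise rescaling of the reference extension $P$ with $P_\varepsilon v=v$ on $R_\varepsilon$, the same interface matching based on $Pw=w$ in $Y^{*}$ and $\overline{T}\subset Y$, the same change of variables for \eqref{eq:uniform-extension-zero}, and the same linearity trick of applying \eqref{eq:cell-extension-one} to $\varepsilon^{-1}w_{k}$ for \eqref{eq:uniform-extension-one}. One wording point: that application is legitimate simply because \eqref{eq:cell-extension-one} holds for every $w\in W^{1}L^{\Phi}(Y^{*})$; its gradient-only form is what makes the resulting constant independent of $\varepsilon$.
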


\begin{proof}
Fix \(v\in X_{\varepsilon}\).  For \(k\in K_{\varepsilon}\), define
\[
w_{\varepsilon,k}(y)
=
v\bigl(\varepsilon(k+y)\bigr),
\qquad y\in Y^{*}.
\]
On each complete cell, set
\begin{equation}
P_{\varepsilon}v(x)
=
Pw_{\varepsilon,k}
\left(
\frac{x}{\varepsilon}-k
\right)
\quad
\text{if }x\in\varepsilon(k+Y),\ k\in K_{\varepsilon},
\label{eq:extension-definition-cell}
\end{equation}
and set
\[
P_{\varepsilon}v(x)=v(x)
\quad\text{if }x\in R_{\varepsilon}.
\]
The reference extension in Lemma~\ref{lem:cell-extension-trace} equals \(w_{\varepsilon,k}\) near \(\partial Y\).  Hence the definitions have identical traces on every interface
\(\partial(\varepsilon(k+Y))\).  The patched function belongs to
\(W^{1}L^{\Phi}(\Omega)\).  The construction changes \(v\) only inside the holes, all of which stay away from \(\partial\Omega\); therefore
\[
\gamma_{0}P_{\varepsilon}v
=
\gamma_{0}v
=
0
\quad\text{on }\partial\Omega.
\]

For every \(k\in K_{\varepsilon}\), change variables in
\eqref{eq:cell-extension-zero}:
\begin{align*}
\int_{\varepsilon(k+Y)}
\Phi(\lvert P_{\varepsilon}v\rvert)\,dx
&=
\varepsilon^{N}
\int_{Y}
\Phi(\lvert Pw_{\varepsilon,k}\rvert)\,dy
\\
&\leq
C\varepsilon^{N}
\int_{Y^{*}}
\Phi(C\lvert w_{\varepsilon,k}\rvert)\,dy
\\
&=
C
\int_{\varepsilon(k+Y^{*})}
\Phi(C\lvert v\rvert)\,dx.
\end{align*}
The complete cells are disjoint.  Summation over the explicit index set
\(K_{\varepsilon}\), followed by addition of the identity part on
\(R_{\varepsilon}\), proves \eqref{eq:uniform-extension-zero}.

For the gradients, the linearity of \(P\) allows us to apply
\eqref{eq:cell-extension-one} to \(w_{\varepsilon,k}/\varepsilon\). Indeed, for \(i=1,\ldots,N\),
\[
D_{y_i}\left(\frac{w_{\varepsilon,k}(y)}\varepsilon\right)
 =D_{x_i}v(\varepsilon(k+y)),\qquad
D_{y_i}P\!\left(\frac{w_{\varepsilon,k}}\varepsilon\right)
 =\frac1\varepsilon D_{y_i}Pw_{\varepsilon,k}.
\]
Consequently, for each \(k\in K_\varepsilon\),
\begin{align*}
&\int_{\varepsilon(k+Y)}\sum_{i=1}^N
 \Phi(|D_{x_i}P_\varepsilon v|)\,dx\\
&\quad=\varepsilon^N\int_Y\sum_{i=1}^N
 \Phi\!\left(\left|D_{y_i}P
  \left(\frac{w_{\varepsilon,k}}\varepsilon\right)\right|\right)dy\\
&\quad\le C\varepsilon^N\int_{Y^*}\sum_{i=1}^N
 \Phi\!\left(C\left|D_{y_i}
 \left(\frac{w_{\varepsilon,k}}\varepsilon\right)\right|\right)dy\\
&\quad=C\int_{\varepsilon(k+Y^*)}\sum_{i=1}^N
 \Phi(C|D_{x_i}v|)\,dx.
\end{align*}
Summation over the disjoint complete cells, together with the identity
\(P_\varepsilon v=v\) on \(R_\varepsilon\), proves
\eqref{eq:uniform-extension-one}, with constants independent of
\(\varepsilon\).
\end{proof}

\begin{proposition}
\label{prop:scaled-trace}
There is \(C>0\), independent of \(\varepsilon\), such that every
\(v\in W^{1}L^{\Phi}(\Omega_{\varepsilon})\) satisfies
\begin{equation}
\varepsilon
\int_{\Gamma_{\varepsilon}}
\Phi(\lvert v\rvert)\,d\sigma_{x}
\leq
C
\int_{\Omega_{\varepsilon}}
\left[
\Phi(C\lvert v\rvert)
+
\sum_{i=1}^{N}
\Phi(C\varepsilon\lvert D_{x_{i}}v\rvert)
\right]\,dx.
\label{eq:scaled-trace}
\end{equation}
For \(0<\varepsilon\leq1\), it follows that
\begin{equation}
\varepsilon
\int_{\Gamma_{\varepsilon}}
\Phi(\lvert v\rvert)\,d\sigma_{x}
\leq
C
\int_{\Omega_{\varepsilon}}
\left[
\Phi(C\lvert v\rvert)
+
\varepsilon^{p^{-}}
\sum_{i=1}^{N}
\Phi(C\lvert D_{x_{i}}v\rvert)
\right]\,dx.
\label{eq:scaled-trace-index}
\end{equation}
\end{proposition}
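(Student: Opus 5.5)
The plan is to prove \eqref{eq:scaled-trace} cell by cell, using the fixed-scale trace inequality \eqref{eq:cell-trace} of Lemma~\ref{lem:cell-extension-trace} and the rescaling identity \eqref{eq:one-cell-surface-Riemann}.

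Fix \(v\in W^{1}L^{\Phi}(\Omega_{\varepsilon})\) and \(k\in K_{\varepsilon}\), and set \(w_{\varepsilon,k}(y)=v(\varepsilon(k+y))\) for \(y\in Y^{*}\). As in the proof of Proposition~\ref{prop:uniform-extension}, \(w_{\varepsilon,k}\in W^{1}L^{\Phi}(Y^{*})\), since the cell \(\varepsilon(k+Y)\) is contained in \(\Omega\). Its gradient is \(D_{y_{i}}w_{\varepsilon,k}(y)=\varepsilon D_{x_{i}}v(\varepsilon(k+y))\), and its trace on \(\partial T\) is the rescaled trace of \(v\) on \(\varepsilon(k+\partial T)\). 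By \eqref{eq:one-cell-surface-Riemann} with \(f=\Phi(\lvert v\rvert)\),
\[
\varepsilon\int_{\varepsilon(k+\partial T)}\Phi(\lvert v\rvert)\,d\sigma_{x}
=\varepsilon^{N}\int_{\partial T}\Phi(\lvert\gamma_{0}w_{\varepsilon,k}\rvert)\,d\sigma_{y}.
\]
Applying \eqref{eq:cell-trace} to \(w_{\varepsilon,k}\) gives
\[
\varepsilon^{N}\int_{\partial T}\Phi(\lvert\gamma_{0}w_{\varepsilon,k}\rvert)\,d\sigma_{y}
\leq C_{T}\varepsilon^{N}\int_{Y^{*}}\Bigl[\Phi(C_{T}\lvert w_{\varepsilon,k}\rvert)+\sum_{i=1}^{N}\Phi(C_{T}\lvert D_{y_{i}}w_{\varepsilon,k}\rvert)\Bigr]dy .
\]
Changing variables back to \(x=\varepsilon(k+y)\) turns the right-hand side into
\[
C_{T}\int_{\varepsilon(k+Y^{*})}\Bigl[\Phi(C_{T}\lvert v\rvert)+\sum_{i=1}^{N}\Phi(C_{T}\varepsilon\lvert D_{x_{i}}v\rvert)\Bigr]dx .
\]
The sets \(\varepsilon(k+Y^{*})\), \(k\in K_{\varepsilon}\), are disjoint subsets of \(\Omega_{\varepsilon}\) and \(\Gamma_{\varepsilon}\) is the disjoint union of the \(\varepsilon(k+\partial T)\). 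Summing over \(k\) therefore gives \eqref{eq:scaled-trace} with \(C=C_{T}\), independent of \(\varepsilon\). The dropped boundary layer \(R_{\varepsilon}\) only enlarges the right-hand side.

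For \eqref{eq:scaled-trace-index}, apply \eqref{eq:scaling-small} with \(s=\varepsilon\leq1\) and \(t=C\lvert D_{x_{i}}v\rvert\). This gives \(\Phi(C\varepsilon\lvert D_{x_{i}}v\rvert)\leq\varepsilon^{p^{-}}\Phi(C\lvert D_{x_{i}}v\rvert)\), and the claim follows.

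The only point needing care is the identification of the trace of \(w_{\varepsilon,k}\) with the rescaled trace of \(v\), which justifies applying \eqref{eq:cell-trace}. I would handle it by density of smooth functions in \(W^{1}L^{\Phi}(\Omega_{\varepsilon})\) under \(\Delta_{2}\), together with the continuity of the trace from Proposition~\ref{prop:compact-Orlicz-trace} at fixed \(\varepsilon\). Under the affine change of variables, both sides of the inequality are then continuous in the modular sense, so the estimate passes from smooth functions to all of \(W^{1}L^{\Phi}(\Omega_{\varepsilon})\).
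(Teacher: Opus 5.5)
Your proposal is correct and follows essentially the same route as the paper: apply \eqref{eq:cell-trace} to \(w_{\varepsilon,k}(y)=v(\varepsilon(k+y))\), rescale via \eqref{eq:one-cell-surface-Riemann}, sum over the disjoint cells \(k\in K_{\varepsilon}\), and then use \eqref{eq:scaling-small} with \(s=\varepsilon\) to obtain \eqref{eq:scaled-trace-index}. Your extra density remark justifying the identification of the rescaled traces is a harmless addition that the paper leaves implicit.
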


\begin{proof}
For \(k\in K_{\varepsilon}\), apply \eqref{eq:cell-trace} to
\(
w_{\varepsilon,k}(y)
=
v\bigl(\varepsilon(k+y)\bigr).
\)
After the changes of variables on the surface and in the volume,
\begin{align*}
\varepsilon
\int_{\varepsilon(k+\partial T)}
\Phi(\lvert v\rvert)\,d\sigma_{x}
 &
=
\varepsilon^{N}
\int_{\partial T}
\Phi(\lvert\gamma_{0}w_{\varepsilon,k}\rvert)\,d\sigma_{y}
\\
&
\leq
C\varepsilon^{N}
\int_{Y^{*}}
\left[
\Phi(C\lvert w_{\varepsilon,k}\rvert)
+
\sum_{i=1}^{N}
\Phi(C\lvert D_{y_{i}}w_{\varepsilon,k}\rvert)
\right]\,dy
\\
&
=
C
\int_{\varepsilon(k+Y^{*})}
\left[
\Phi(C\lvert v\rvert)
+
\sum_{i=1}^{N}
\Phi(C\varepsilon\lvert D_{x_{i}}v\rvert)
\right]\,dx.
\end{align*}
Summing over \(k\in K_{\varepsilon}\) proves
\eqref{eq:scaled-trace}.  For \(0<\varepsilon\leq1\),
\eqref{eq:scaling-small} gives
\[
\Phi(C\varepsilon\lvert D_{x_{i}}v\rvert)
\leq
\varepsilon^{p^{-}}
\Phi(C\lvert D_{x_{i}}v\rvert),
\qquad i=1,\ldots,N,
\]
and hence \eqref{eq:scaled-trace-index}.
\end{proof}

For \(U_{\varepsilon}\in W^{1}L^{\Phi}(\Omega)\), define the complete-cell averages
\begin{equation}
m_{\varepsilon,k}
=
\frac{1}{\varepsilon^{N}\lvert Y\rvert}
\int_{\varepsilon(k+Y)}
U_{\varepsilon}(x)\,dx,
\qquad k\in K_{\varepsilon},
\label{eq:cell-mean}
\end{equation}
and
\begin{equation}
Q_{\varepsilon}U_{\varepsilon}(x)
=
\left\{
\begin{aligned}
m_{\varepsilon,k},
&\quad
x\in\varepsilon(k+Y),\ k\in K_{\varepsilon},
\\
U_{\varepsilon}(x),
&\quad
x\in R_{\varepsilon}.
\end{aligned}
\right.
\label{eq:cell-average-operator}
\end{equation}

\begin{proposition}
\label{prop:nonlinear-surface-compactness}
Let \(v_{\varepsilon}\in W^{1}L^{\Phi}(\Omega_{\varepsilon})\), \(\varepsilon\in E\), and let \(U_{\varepsilon}\in W^{1}L^{\Phi}(\Omega)\) satisfy \(U_{\varepsilon}=v_{\varepsilon}\) almost everywhere in \(\Omega_{\varepsilon}\),
\begin{equation}
\sup_{\varepsilon\in E}
\int_{\Omega}
\left[
\Phi(\lvert U_{\varepsilon}\rvert)
+
\sum_{i=1}^{N}
\Phi(\lvert D_{x_{i}}U_{\varepsilon}\rvert)
\right]\,dx
<\infty,
\qquad
U_{\varepsilon}\rightharpoonup v_{0}
\quad\text{in }W^{1}L^{\Phi}(\Omega).
\label{eq:surface-compactness-assumption}
\end{equation}
Then
\begin{align}
\varepsilon
\int_{\Gamma_{\varepsilon}}
\rho\left(\frac{x}{\varepsilon}\right)
\Phi(\lvert v_{\varepsilon}\rvert)\,d\sigma_{x}
&\longrightarrow
\beta_{\rho}
\int_{\Omega}\Phi(\lvert v_{0}\rvert)\,dx,
\label{eq:surface-Phi-limit}
\\
\varepsilon
\int_{\Gamma_{\varepsilon}}
\rho\left(\frac{x}{\varepsilon}\right)
\phi(\lvert v_{\varepsilon}\rvert)
\lvert v_{\varepsilon}\rvert\,d\sigma_{x}
&\longrightarrow
\beta_{\rho}
\int_{\Omega}
\phi(\lvert v_{0}\rvert)\lvert v_{0}\rvert\,dx,
\label{eq:surface-H-limit}
\end{align}
and, for every \(\psi\in C(\overline{\Omega})\),
\begin{equation}
\varepsilon
\int_{\Gamma_{\varepsilon}}
\rho\left(\frac{x}{\varepsilon}\right)
\phi(\lvert v_{\varepsilon}\rvert)
\operatorname{sgn}(v_{\varepsilon})
\psi(x)\,d\sigma_{x}
\longrightarrow
\beta_{\rho}
\int_{\Omega}
\phi(\lvert v_{0}\rvert)
\operatorname{sgn}(v_{0})
\psi(x)\,dx.
\label{eq:surface-derivative-limit}
\end{equation}
In particular, these conclusions hold for \(v_{\varepsilon}\in X_{\varepsilon}\) bounded in \(W^{1}L^{\Phi}(\Omega_{\varepsilon})\), with \(U_{\varepsilon}=P_{\varepsilon}v_{\varepsilon}\rightharpoonup v_{0}\) in \(W_{0}^{1}L^{\Phi}(\Omega)\).
\end{proposition}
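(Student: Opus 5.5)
The plan is to compare, on each complete cell \(\varepsilon(k+\partial T)\), the trace of \(v_{\varepsilon}\) (equivalently of \(U_{\varepsilon}\), since \(U_{\varepsilon}=v_{\varepsilon}\) on \(\Omega_{\varepsilon}\) and \(\Gamma_{\varepsilon}\subset\partial\Omega_{\varepsilon}\) has traces from the perforated side coinciding with those of \(U_\varepsilon\)) with the cell average \(m_{\varepsilon,k}\). Apply \eqref{eq:cell-trace-Poincare} to \(W_{k}(y)=U_{\varepsilon}(\varepsilon(k+y))\) and rescale with \eqref{eq:one-cell-surface-Riemann}. Then apply \eqref{eq:scaling-small} to the factor \(\varepsilon\) in \(D_{y_i}W_k=\varepsilon D_{x_i}U_\varepsilon\), and sum over \(k\in K_{\varepsilon}\). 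This should give the key estimate
\[
\int_{\Gamma_{\varepsilon}}\Phi\bigl(\lvert v_{\varepsilon}-Q_{\varepsilon}U_{\varepsilon}\rvert\bigr)\,d\vartheta_{\varepsilon}
\leq C\,\varepsilon^{p^{-}}\int_{\Omega}\sum_{i=1}^{N}\Phi(C\lvert D_{x_i}U_{\varepsilon}\rvert)\,dx\longrightarrow0,
\]
using \eqref{eq:fixed-scaling} and the uniform bound \eqref{eq:surface-compactness-assumption}. The same estimate holds with any constant \(C'\) inside \(\Phi\), by \eqref{eq:fixed-scaling}. Similarly, \eqref{eq:cell-trace} with the scaling gives a uniform bound \(\sup_\varepsilon\int_{\Gamma_\varepsilon}\Phi(C\lvert v_\varepsilon\rvert)\,d\vartheta_\varepsilon<\infty\) for every \(C\). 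Jensen gives \(\int_{\Gamma_\varepsilon}\Phi(C\lvert Q_\varepsilon U_\varepsilon\rvert)\,d\vartheta_\varepsilon\le\lvert\partial T\rvert\int_\Omega\Phi(C\lvert U_\varepsilon\rvert)\,dx\).

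Next apply Lemma~\ref{lem:nonlinear-modular-continuity}(a) on the measure spaces \((\Gamma_{\varepsilon},\rho(x/\varepsilon)\vartheta_{\varepsilon})\), which have uniformly bounded mass, with \(a_\varepsilon=v_\varepsilon\) and \(b_\varepsilon=Q_\varepsilon U_\varepsilon\). This reduces \eqref{eq:surface-Phi-limit}--\eqref{eq:surface-derivative-limit} to the same statements with \(v_{\varepsilon}\) replaced by the piecewise constant \(Q_{\varepsilon}U_{\varepsilon}\). For \eqref{eq:surface-derivative-limit}, the \(L^1\) convergence of \(g(a_\varepsilon)-g(b_\varepsilon)\) is multiplied by the bounded \(\psi\).

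For the piecewise constant function the surface integrals become volume integrals. Indeed, by \eqref{eq:one-cell-surface-Riemann},
\[
\int_{\varepsilon(k+\partial T)}\rho(x/\varepsilon)\,f(m_{\varepsilon,k})\,d\vartheta_\varepsilon=\beta_\rho\int_{\varepsilon(k+Y)}f(Q_\varepsilon U_\varepsilon)\,dx
\]
for \(f=\Phi,H\). For the term with \(\psi\), replace \(\psi(x)\) on each cell by \(\psi(\varepsilon k)\). This costs at most the modulus of continuity of \(\psi\) at scale \(\varepsilon\sqrt N\), times the bounded \(L^1\) norm of \(g(Q_\varepsilon U_\varepsilon)\). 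It thus remains to show that \(Q_{\varepsilon}U_{\varepsilon}\chi_{\Omega^{\mathrm{cell}}_\varepsilon}\to v_0\) strongly in \(L^{\Phi}(\Omega)\), up to the layer \(R_\varepsilon\). Lemma~\ref{lem:cell-average}, rescaled per cell, gives \(\int_{\Omega^{\mathrm{cell}}_\varepsilon}\Phi(\lvert U_\varepsilon-Q_\varepsilon U_\varepsilon\rvert)\,dx\le C\varepsilon^{p^-}\sup_\varepsilon\int_\Omega\sum_i\Phi(C\lvert D_{x_i}U_\varepsilon\rvert)\,dx\to0\). By \eqref{eq:compact-volume-embedding}, \(U_\varepsilon\to v_0\) in \(L^\Phi(\Omega)\) for the whole family, since the weak limit is fixed. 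The contribution of \(R_\varepsilon\) to \(\int_\Omega f(v_0)\) vanishes by \eqref{eq:boundary-layer-small} and the absolute continuity of the integral. Lemma~\ref{lem:nonlinear-modular-continuity}, applied in volume, or part (b), then gives convergence of \(\int\Phi\), \(\int H\) and \(\int g\,\psi\).

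The final assertion follows from Proposition~\ref{prop:uniform-extension}: boundedness in \(W^1L^\Phi(\Omega_\varepsilon)\) gives modular bounds on \(P_\varepsilon v_\varepsilon\) via \eqref{eq:modular-small-norm}, and reflexivity yields a weakly convergent subsequence. The main obstacle is the first step, namely justifying that the trace on \(\Gamma_{\varepsilon}\) of \(v_\varepsilon\) equals that of \(U_\varepsilon\) restricted from \(Y^*\), and tracking the constants inside \(\Phi\) so that the factor \(\varepsilon^{p^-}\) appears uniformly. I would handle this by applying \eqref{eq:cell-trace} to \(W_k-m_{\varepsilon,k}\) restricted to \(Y^{*}\), exactly as in the proof of \eqref{eq:cell-trace-Poincare}.
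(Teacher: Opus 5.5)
Your proposal is correct and follows the paper's proof essentially step by step. The steps are:
\begin{itemize}
\item the trace-versus-cell-average estimate obtained from \eqref{eq:cell-trace-Poincare} and \eqref{eq:scaling-small};
\item Lemma~\ref{lem:nonlinear-modular-continuity}(a) on the varying surface measures, with $a_\varepsilon=v_\varepsilon$ and $b_\varepsilon=Q_\varepsilon U_\varepsilon$;
\item the exact surface-to-volume identity for the piecewise constant $Q_\varepsilon U_\varepsilon$;
\item the strong $L^\Phi$ convergence of $Q_\varepsilon U_\varepsilon$, from Lemma~\ref{lem:cell-average} and the compact embedding.
\end{itemize}

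The differences are cosmetic. You absorb $\rho(x/\varepsilon)$ into the measure instead of inserting it afterwards via $0\le\rho\le\rho_+$. Also, no extraction is needed in the final assertion, because the weak convergence $P_\varepsilon v_\varepsilon\rightharpoonup v_0$ is already part of the hypothesis.
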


\begin{proof}
The compact embedding \eqref{eq:compact-volume-embedding} gives
\begin{equation}
U_{\varepsilon}
\longrightarrow
v_{0}
\quad\text{in }L^{\Phi}(\Omega).
\label{eq:extension-strong-LPhi}
\end{equation}

We first estimate the difference between each trace and its complete-cell average.
Apply \eqref{eq:cell-trace-Poincare} to
\[
W_{\varepsilon,k}(y)
=
U_{\varepsilon}\bigl(\varepsilon(k+y)\bigr).
\]
Its average over \(Y\) is \(m_{\varepsilon,k}\).  Since
\[
\gamma_{0}U_{\varepsilon}
=
\gamma_{0}v_{\varepsilon}
\quad\text{on }\varepsilon(k+\partial T),
\]
rescaling by \eqref{eq:one-cell-surface-Riemann} gives
\begin{align*}
&\varepsilon
\int_{\varepsilon(k+\partial T)}
\Phi
\left(
\lvert v_{\varepsilon}-m_{\varepsilon,k}\rvert
\right)\,d\sigma_{x}
\\
&\quad
\leq
C
\int_{\varepsilon(k+Y)}
\sum_{i=1}^{N}
\Phi(C\varepsilon\lvert D_{x_{i}}U_{\varepsilon}\rvert)\,dx.
\end{align*}
Sum for \(k\in K_{\varepsilon}\) and use
\eqref{eq:scaling-small}.  We obtain
\begin{equation}
\int_{\Gamma_{\varepsilon}}
\Phi
\left(
\left|
v_{\varepsilon}
-Q_{\varepsilon}U_{\varepsilon}
\right|
\right)\,d\vartheta_{\varepsilon}
\leq
C\varepsilon^{p^{-}}
\int_{\Omega}
\sum_{i=1}^{N}
\Phi(C\lvert D_{x_{i}}U_{\varepsilon}\rvert)\,dx
\longrightarrow0.
\label{eq:trace-close-to-mean}
\end{equation}

The cell averages converge to the same strong volume limit.
Lemma~\ref{lem:cell-average}, after rescaling, gives
\begin{equation}
\int_{\Omega_{\varepsilon}^{\mathrm{cell}}}
\Phi
\left(
\left|
U_{\varepsilon}
-Q_{\varepsilon}U_{\varepsilon}
\right|
\right)\,dx
\leq
C\varepsilon^{p^{-}}
\int_{\Omega}
\sum_{i=1}^{N}
\Phi(C\lvert D_{x_{i}}U_{\varepsilon}\rvert)\,dx
\longrightarrow0.
\label{eq:volume-close-to-mean}
\end{equation}
The two functions coincide on \(R_{\varepsilon}\).  Combining
\eqref{eq:extension-strong-LPhi},
\eqref{eq:volume-close-to-mean} and the modular--norm equivalence yields
\begin{equation}
Q_{\varepsilon}U_{\varepsilon}
\longrightarrow
v_{0}
\quad\text{in }L^{\Phi}(\Omega).
\label{eq:mean-strong-LPhi}
\end{equation}

We next pass to the nonlinear surface modulars.
Let either
\[
\mathcal{H}(t)=\Phi(t)
\quad\text{or}\quad
\mathcal{H}(t)=t\phi(t).
\]
Because \(Q_{\varepsilon}U_{\varepsilon}\) is constant on every complete cell,
\begin{align}
\varepsilon
\int_{\Gamma_{\varepsilon}}
\rho\left(\frac{x}{\varepsilon}\right)
\mathcal{H}(\lvert Q_{\varepsilon}U_{\varepsilon}\rvert)\,d\sigma_{x}
\nonumber
&
=
\sum_{k\in K_{\varepsilon}}
\varepsilon^{N}
\mathcal{H}(\lvert m_{\varepsilon,k}\rvert)
\int_{\partial T}\rho(y)\,d\sigma_{y}
\nonumber\\
&
=
\beta_{\rho}
\int_{\Omega_{\varepsilon}^{\mathrm{cell}}}
\mathcal{H}(\lvert Q_{\varepsilon}U_{\varepsilon}\rvert)\,dx.
\label{eq:exact-surface-volume-identity}
\end{align}
By \eqref{eq:Phi-indices},
\[
0\leq t\phi(t)\leq p^{+}\Phi(t).
\]
By \eqref{eq:mean-strong-LPhi} and
Lemma~\ref{lem:nonlinear-modular-continuity},
\(\mathcal{H}(|Q_\varepsilon U_\varepsilon|)\to \mathcal{H}(|v_0|)\) in
\(L^1(\Omega)\) for either choice of \(H\). Therefore,
\[
\int_{R_\varepsilon}\mathcal{H}(|Q_\varepsilon U_\varepsilon|)\,dx
\le\|\mathcal{H}(|Q_\varepsilon U_\varepsilon|)-\mathcal{H}(|v_0|)\|_{L^1(\Omega)}
+\int_{R_\varepsilon}\mathcal{H}(|v_0|)\,dx\longrightarrow0,
\]
where the last term tends to zero by \eqref{eq:boundary-layer-small}
and absolute continuity of the integral. Hence the last member of
\eqref{eq:exact-surface-volume-identity} converges to
\[
\beta_{\rho}\int_{\Omega}\mathcal{H}(\lvert v_{0}\rvert)\,dx.
\]

To apply Lemma~\ref{lem:nonlinear-modular-continuity} on the varying
measure spaces \((\Gamma_\varepsilon,\vartheta_\varepsilon)\), let
\(a_\varepsilon=v_\varepsilon\) and
\(b_\varepsilon=Q_\varepsilon U_\varepsilon\). The measure bound is
\eqref{eq:one-cell-surface-Riemann}, and the modular difference tends
to zero by \eqref{eq:trace-close-to-mean}. For every fixed \(L>0\),
the unweighted surface average identity and Jensen's inequality give
\begin{align*}
\int_{\Gamma_\varepsilon}
 \Phi(L|b_\varepsilon|)\,d\vartheta_\varepsilon
&=|\partial T|\sum_{k\in K_\varepsilon}
 \varepsilon^N\Phi(L|m_{\varepsilon,k}|)\\
&\le|\partial T|\int_\Omega\Phi(L|U_\varepsilon|)\,dx
\le C_L.
\end{align*}
Similarly, the scaled trace inequality \eqref{eq:scaled-trace-index}
and \eqref{eq:fixed-scaling} yield
\[
\int_{\Gamma_\varepsilon}
\Phi(L|a_\varepsilon|)\,d\vartheta_\varepsilon
\le C_L\int_{\Omega_\varepsilon}
\left[\Phi(C_L|v_\varepsilon|)
 +\varepsilon^{p^-}\sum_{i=1}^N
   \Phi(C_L|D_{x_i}v_\varepsilon|)\right]dx
\le C'_L.
\]
Both bounds are uniform in \(\varepsilon\), by
\eqref{eq:surface-compactness-assumption} and \eqref{eq:fixed-scaling};
thus all the hypotheses of part~(a) of the Nemytskii lemma hold.
It follows that
\[
\int_{\Gamma_{\varepsilon}}
\left|
\mathcal{H}(\lvert v_{\varepsilon}\rvert)
-\mathcal{H}(\lvert Q_{\varepsilon}U_{\varepsilon}\rvert)
\right|\,d\vartheta_{\varepsilon}
\longrightarrow0.
\]
Since \(0\leq\rho\leq\rho_{+}\), the same convergences hold with the weight \(\rho(x/\varepsilon)\).
This proves \eqref{eq:surface-Phi-limit} and
\eqref{eq:surface-H-limit}.

The same approximation also identifies the surface Euler derivative.
Lemma~\ref{lem:nonlinear-modular-continuity} also gives
\begin{equation}
\int_{\Gamma_{\varepsilon}}
\left|
\phi(\lvert v_{\varepsilon}\rvert)
\operatorname{sgn}(v_{\varepsilon})
-
\phi(\lvert Q_{\varepsilon}U_{\varepsilon}\rvert)
\operatorname{sgn}(Q_{\varepsilon}U_{\varepsilon})
\right|\,d\vartheta_{\varepsilon}
\longrightarrow0.
\label{eq:Euler-trace-replacement}
\end{equation}
For the mean term, replace \(\psi(x)\) on each surface
\(\varepsilon(k+\partial T)\) by \(\psi(\varepsilon k)\).  The uniform continuity of \(\psi\) on \(\overline{\Omega}\) and
\eqref{eq:phi-complement-bound} control the error.  The exact cell identity then becomes
\[
\beta_{\rho}
\sum_{k\in K_{\varepsilon}}
\varepsilon^{N}
\phi(\lvert m_{\varepsilon,k}\rvert)
\operatorname{sgn}(m_{\varepsilon,k})
\psi(\varepsilon k).
\]
By \eqref{eq:mean-strong-LPhi} and the last conclusion of
Lemma~\ref{lem:nonlinear-modular-continuity}, this converges to the right-hand side of
\eqref{eq:surface-derivative-limit}.  Together with
\eqref{eq:Euler-trace-replacement}, this completes the proof.
\end{proof}

We finally restrict weak two-scale limits to the solid phase, and combine this with the uniform extension to describe the limits of gradients in \(\Omega_{\varepsilon}\).

\begin{lemma}
\label{lem:solid-phase-restriction}
Let \(z_\varepsilon\) be bounded in \(L^\Phi(\Omega)\) and weakly two-scale convergent to \(z\in L^\Phi(\Omega\times Y)\). With \(\chi_\varepsilon=\mathbf1_{\Omega_\varepsilon}\) and \(\chi_{Y^*}=\mathbf1_{Y^*}\), one has
\[
\chi_\varepsilon z_\varepsilon\overset{2s}{\rightharpoonup}\chi_{Y^*}z.
\]
The assertion also holds with \(\Phi\) replaced by \(\widetilde\Phi\).
\end{lemma}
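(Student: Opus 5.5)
Fix a test function \(\psi\in L^{\widetilde\Phi}(\Omega;C_{\mathrm{per}}(Y))\). The plan is to show
\[
\int_\Omega\chi_\varepsilon z_\varepsilon\psi^\varepsilon\,dx\longrightarrow\int_\Omega\int_{Y^*}z\,\psi\,dy\,dx .
\]
The obstacle is that \(\chi_\varepsilon\) is not of the form \(\chi(x/\varepsilon)\) for a continuous periodic \(\chi\). So \(\chi_\varepsilon\psi^\varepsilon\) is not directly an admissible test function, for two reasons: \(\mathbf 1_{Y^*}\) is discontinuous, and \(\Omega_\varepsilon\) is unperforated in the layer \(R_\varepsilon\). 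Both defects will be handled by approximation, using the uniform bound \(\sup_\varepsilon\|z_\varepsilon\|_{\Phi,\Omega}=:M<\infty\) together with the Orlicz--H\"older inequality \eqref{eq:Orlicz-Holder}.

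First, write \(\chi_\varepsilon=\mathbf 1_{Y^*}(x/\varepsilon)\) on \(\Omega^{\mathrm{cell}}_\varepsilon\) and \(\chi_\varepsilon=1\) on \(R_\varepsilon\). The contribution of \(R_\varepsilon\) is bounded by
\[
2M\,\|\mathbf 1_{R_\varepsilon}\sup_y|\psi(\cdot,y)|\|_{\widetilde\Phi,\Omega}.
\]
This tends to \(0\) by \eqref{eq:boundary-layer-small}, absolute continuity of the \(\widetilde\Phi\)-modular of the fixed function \(\sup_y|\psi|\in L^{\widetilde\Phi}(\Omega)\), and \eqref{eq:modular-small-norm}. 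The same estimate shows that replacing \(\chi_\varepsilon\) by \(\mathbf 1_{Y^*}(x/\varepsilon)\) on all of \(\Omega\) costs \(o(1)\). It therefore suffices to treat \(\int_\Omega z_\varepsilon\,\psi^\varepsilon\,\mathbf 1_{Y^*}(x/\varepsilon)\,dx\).

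Next, approximate \(\mathbf 1_{Y^*}\) by continuous periodic cutoffs \(\theta_n\in C_{\mathrm{per}}(Y)\) with \(0\le\theta_n\le1\), \(\theta_n=\mathbf 1_{Y^*}\) outside a neighbourhood \(S_n\) of \(\partial T\), and \(|S_n|\to0\). Since \(\psi\theta_n\in L^{\widetilde\Phi}(\Omega;C_{\mathrm{per}}(Y))\), Definition~\ref{def:weak-two-scale} gives
\[
\int_\Omega z_\varepsilon\psi^\varepsilon\theta_n(x/\varepsilon)\,dx\longrightarrow\int_\Omega\int_Y z\psi\theta_n\,dy\,dx .
\]
As \(n\to\infty\), the right-hand side tends to \(\int_\Omega\int_{Y^*}z\psi\) by dominated convergence, since \(z\psi\in L^1(\Omega\times Y)\) by H\"older.

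The main step is the error term, which is uniform in \(\varepsilon\):
\[
\Bigl|\int_\Omega z_\varepsilon\psi^\varepsilon(\theta_n-\mathbf 1_{Y^*})(x/\varepsilon)\,dx\Bigr|\le 2M\,\bigl\|\overline\psi\,\mathbf 1_{S_n}(x/\varepsilon)\bigr\|_{\widetilde\Phi,\Omega},\qquad \overline\psi(x)=\sup_y|\psi(x,y)|.
\]
To make this small uniformly in \(\varepsilon\), I would first reduce, by density and the same H\"older argument, to \(\psi\in C(\overline\Omega;C_{\mathrm{per}}(Y))\), so that \(\overline\psi\le\|\psi\|_\infty\). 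Then the \(\widetilde\Phi\)-modular of the error function is at most \(\widetilde\Phi(\|\psi\|_\infty)\,|\{x\in\Omega:x/\varepsilon\in S_n+\mathbb Z^N\}|\). That measure is bounded by \(C|S_n|\) uniformly for small \(\varepsilon\), by counting the cells that meet \(\Omega\). A limsup in \(\varepsilon\) followed by \(n\to\infty\) closes the argument.

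The density reduction is legitimate because each approximation error is controlled by \(2M\|\overline{\psi-\psi_m}\|_{\widetilde\Phi,\Omega}\) uniformly in \(\varepsilon\), plus the analogous bound for the limit integral. The case with \(\Phi\) and \(\widetilde\Phi\) interchanged is identical, with the roles of the two functions swapped in \eqref{eq:Orlicz-Holder}. Both are \(\Delta_2\), which the modular--norm equivalence needs.
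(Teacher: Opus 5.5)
Your proof is correct and follows essentially the same route as the paper. Both arguments reduce to continuous test functions and replace \(\chi_\varepsilon\) by \(\mathbf 1_{Y^*}(x/\varepsilon)\) up to the boundary layer \(R_\varepsilon\); they then approximate \(\mathbf 1_{Y^*}\) by continuous periodic functions and control the error uniformly in \(\varepsilon\) by Orlicz--H\"older before passing to the two-scale limit. The only cosmetic difference is in the error bound: you use a direct cell-counting estimate on the thin periodic set \(S_n\), whereas the paper approximates \(\chi_{Y^*}\) in \(L^{\widetilde\Phi}(Y)\) and invokes the periodic mean-value property.
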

\begin{proof}
It suffices to test against \(\psi\in C_c(\Omega;C_{\rm per}(Y))\), then use density and the uniform Orlicz bounds. For \(\delta>0\), choose a smooth periodic \(\eta_\delta\), with \(0\le\eta_\delta\le1\), such that
\(\|\eta_\delta-\chi_{Y^*}\|_{\widetilde\Phi,Y}\to0\). The complete-cell geometry implies that \(\chi_\varepsilon\) differs from \(\chi_{Y^*}(x/\varepsilon)\) only in the boundary layer \(R_\varepsilon\), whose measure tends to zero. Thus for fixed \(\delta\), the mean-value property of periodic bounded functions (see \cite[Chapter~2]{CioranescuDonato1999}), applied to \(\widetilde\Phi(\lvert\chi_{Y^*}-\eta_\delta\rvert/a)\) for each \(a>0\), together with \(\Delta_2\), implies

\[
\limsup_{\varepsilon\to0}
\|\chi_\varepsilon-\eta_\delta(x/\varepsilon)\|_{\widetilde\Phi,\Omega}
\le C\|\chi_{Y^*}-\eta_\delta\|_{\widetilde\Phi,Y}.
\]
Generalized H\"older bounds the error in the pairing with \(z_\varepsilon\psi(x,x/\varepsilon)\) by a constant times the right-hand side. For fixed \(\delta\), \(\eta_\delta(y)\psi(x,y)\) is an admissible two-scale test, so pass first to \(\varepsilon\downarrow0\), then to \(\delta\downarrow0\). Since \(\eta_\delta\to\chi_{Y^*}\) also in \(L^{\widetilde\Phi}(Y)\), H\"older gives the required limit. For \(z_\varepsilon\in L^{\widetilde\Phi}\), interchange the complementary pair \(\Phi,\widetilde\Phi\) in the argument.
\end{proof}

\begin{proposition}
\label{prop:perforated-gradient-compactness}
Let \(v_{\varepsilon}\in X_{\varepsilon}\), \(\varepsilon\in E\), be bounded in \(W^{1}L^{\Phi}(\Omega_{\varepsilon})\).  There exist a subsequence, not relabelled, a function
\(v_{0}\in W_{0}^{1}L^{\Phi}(\Omega)\) and a function
\(v_{1}\in L^{1}(\Omega;W_{\#}^{1}L^{\Phi}(Y^{*}))\) with
\(\nabla_{y}v_{1}\in L^{\Phi}(\Omega\times Y^{*})^{N}\), such that
\begin{align}
P_{\varepsilon}v_{\varepsilon}
&\rightharpoonup
v_{0}
&&\text{in }W_{0}^{1}L^{\Phi}(\Omega),
\label{eq:gradient-compactness-weak}
\\
P_{\varepsilon}v_{\varepsilon}
&\longrightarrow
v_{0}
&&\text{in }L^{\Phi}(\Omega),
\label{eq:gradient-compactness-strong}
\\
\chi_{\varepsilon}D_{x_{i}}v_{\varepsilon}
&\overset{2s}{\rightharpoonup}
\chi_{Y^{*}}
\left(
D_{x_{i}}v_{0}
+D_{y_{i}}v_{1}
\right),
&&i=1,\ldots,N.
\label{eq:perforated-gradient-limit}
\end{align}
\end{proposition}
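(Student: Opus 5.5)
The plan follows the classical Allaire/Nguetseng route for perforated domains, transcribed to the Orlicz setting. First, Proposition~\ref{prop:uniform-extension} together with \eqref{eq:modular-small-norm} shows that \(U_\varepsilon=P_\varepsilon v_\varepsilon\) is bounded in \(W_0^1L^\Phi(\Omega)\). By reflexivity (both \(\Phi\) and \(\widetilde\Phi\) satisfy \(\Delta_2\)) we extract a subsequence with \(U_\varepsilon\rightharpoonup v_0\) weakly in \(W_0^1L^\Phi(\Omega)\). The compact embedding \eqref{eq:compact-volume-embedding} then gives \eqref{eq:gradient-compactness-strong}. Since \(W_0^1L^\Phi(\Omega)\) is weakly closed, \(v_0\) belongs to it.

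Next, the full gradients \(\nabla U_\varepsilon\) are bounded in \(L^\Phi(\Omega)^N\), so Theorem~\ref{thm:volume-two-scale-compactness} yields, after a further extraction, \(D_{x_i}U_\varepsilon\overset{2s}{\rightharpoonup}\zeta_i\in L^\Phi(\Omega\times Y)\). We then identify \(\zeta=\nabla v_0+\nabla_y V_1\) for some \(V_1\) with \(\nabla_yV_1\in L^\Phi(\Omega\times Y)^N\) and \(V_1(x,\cdot)\in W^1_\#L^\Phi(Y)\). The identification uses two sets of tests.

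\begin{itemize}
\item Test with \(\varepsilon\,\Psi(x,x/\varepsilon)\), \(\Psi\in C_c^\infty(\Omega;C^\infty_{\rm per}(Y)^N)\), and integrate by parts. Using \(U_\varepsilon\to v_0\) strongly, this shows that \(\zeta\) does not depend on \(y\)-divergence-free directions. Equivalently, \(\int_Y\zeta\cdot\Psi\,dy=0\) whenever \(\operatorname{div}_y\Psi=0\).
\item Test with \(y\)-independent functions. This gives \(\int_Y\zeta\,dy=\nabla v_0\).
\end{itemize}

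The orthogonality \(\int_Y(\zeta-\nabla v_0)\cdot\Psi\,dy=0\) for all periodic \(y\)-solenoidal \(\Psi\) is then converted, for a.e.\ \(x\), into \(\zeta-\nabla v_0=\nabla_yV_1(x,\cdot)\). The tool is the periodic de Rham/Helmholtz decomposition in \(L^\Phi(Y)^N\). In the reflexive \(\Delta_2\) setting this follows from the \(L^p\) case: \(p^-\le\) the indices \(\le p^+\) give \(L^{p^+}\subset L^\Phi\subset L^{p^-}\) on bounded sets. So one first gets \(V_1(x,\cdot)\in W^{1,p^-}_\#(Y)\), and the Orlicz integrability of the gradient is inherited from \(\zeta\). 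Measurability in \(x\) follows by working in the closed subspace \(\{\nabla_y W\}\subset L^\Phi(\Omega\times Y)^N\), or by fixing the zero-mean normalization and using Lemma~\ref{lem:cell-average}, which also gives \(V_1\in L^1(\Omega;W^1_\#L^\Phi(Y))\). \textbf{This identification step is the main obstacle.} The difficulty is the orthogonal-complement characterization in Orlicz spaces, rather than anything specific to the perforation. If the Helmholtz route is awkward, I would instead argue via the dual: \(\{\nabla_y W: W\in L^\Phi(\Omega;W^1_\#L^\Phi(Y))\}\) is closed (by the Poincaré inequality \eqref{eq:cell-average-inequality} and reflexivity), and its annihilator in \(L^{\widetilde\Phi}\) consists exactly of the \(y\)-solenoidal fields. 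Hahn–Banach plus the density Lemma~\ref{lem:smooth-periodic-density} then closes the argument.

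Finally, set \(v_1=V_1|_{\Omega\times Y^*}\), renormalized to have zero mean over \(Y^*\); subtracting an \(x\)-dependent constant does not change \(\nabla_y\). Since \(\chi_\varepsilon D_{x_i}v_\varepsilon=\chi_\varepsilon D_{x_i}U_\varepsilon\), Lemma~\ref{lem:solid-phase-restriction} gives \eqref{eq:perforated-gradient-limit}. The space \(W^1_\#L^\Phi(Y^*)\) is normed by gradients, as justified by Lemma~\ref{lem:Lipschitz-Poincare}, and \(\nabla_yv_1\in L^\Phi(\Omega\times Y^*)^N\) follows from the corresponding property of \(V_1\). This yields all the claimed properties.
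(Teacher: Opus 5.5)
Your argument has the same skeleton as the paper's proof. You bound $U_\varepsilon=P_\varepsilon v_\varepsilon$ by Proposition~\ref{prop:uniform-extension}, and use reflexivity and \eqref{eq:compact-volume-embedding} to get \eqref{eq:gradient-compactness-weak}--\eqref{eq:gradient-compactness-strong}. You then take a two-scale limit of $\nabla U_\varepsilon$ of the form $\nabla v_0+\nabla_y\widehat v_1$ on $\Omega\times Y$, restrict to $Y^*$ with the $Y^*$-mean removed, and conclude with Lemma~\ref{lem:solid-phase-restriction} via $\chi_\varepsilon\nabla v_\varepsilon=\chi_\varepsilon\nabla U_\varepsilon$.

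The only difference is the middle step. The paper quotes the gradient two-scale compactness theorem of \cite{TachagoNnang2012}. You reprove it: orthogonality of $\zeta-\nabla v_0$ to $y$-solenoidal fields, then a periodic Helmholtz decomposition. Your reduction to the $L^{p}$ theory is sound. By \eqref{eq:scaling-large}, $\Phi(t)\ge\Phi(1)t^{p^-}$ for $t\ge1$, so $L^\Phi(Y)\subset L^{p^-}(Y)$. The Helmholtz projection on the torus (bounded on $L^{p^-}$ since $p^->1$) gives $V_1(x,\cdot)\in W^{1,p^-}_{\mathrm{per}}(Y)$, and $\nabla_yV_1=\zeta-\nabla v_0\in L^\Phi(\Omega\times Y)^N$ supplies the Orlicz integrability. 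Measurability in $x$ holds because $V_1(x,\cdot)$ is a bounded linear image of $\zeta(x,\cdot)-\nabla v_0(x)$. Your version is self-contained and shows the Orlicz case costs nothing beyond $L^{p^-}$; the paper's is two lines but rests on the external theorem.

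One misstatement needs fixing. In your first bullet, testing with $\varepsilon\Psi(x,x/\varepsilon)$ and letting $\varepsilon\to0$ gives only $0=0$. The two-scale limit of $U_\varepsilon$ is already known to be $v_0$, and $\int_Y\operatorname{div}_y\Psi\,dy=0$. The informative test is $\Psi(x,x/\varepsilon)$ itself with $\operatorname{div}_y\Psi=0$. The $\varepsilon^{-1}$ term then vanishes identically, and strong convergence of $U_\varepsilon$ gives $\int_\Omega\int_Y(\zeta-\nabla v_0)\cdot\Psi\,dy\,dx=0$.

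Your ``equivalently, $\int_Y\zeta\cdot\Psi\,dy=0$'' is false. Constant fields are solenoidal, so it would force $\int_Y\zeta\,dy=0$, contradicting your second bullet. Your next paragraph uses the correct relation, so this is a slip rather than a gap.

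Two small completions:
\begin{itemize}
\item To pass to ``a.e.\ $x$'', use tests $f(x)\psi_j(y)$, with $(\psi_j)$ a countable family of smooth periodic solenoidal fields that is dense among them in the uniform norm. The paper does the same for \eqref{eq:pointwise-unidentified-cell}.
\item In your duality variant, Lemma~\ref{lem:smooth-periodic-density} as stated (zero extension from $Y^*$, in $L^\Phi$) does not preserve $\operatorname{div}_y=0$. You need its analogue on $\Omega\times Y$ in $L^{\widetilde\Phi}$. That analogue holds because $x$-cut-offs and mollifications commute with $\operatorname{div}_y$.
\end{itemize}
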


\begin{proof}
By Proposition~\ref{prop:uniform-extension}, \(U_{\varepsilon}=P_{\varepsilon}v_{\varepsilon}\) is bounded in \(W_{0}^{1}L^{\Phi}(\Omega)\).  Reflexivity and \eqref{eq:compact-volume-embedding} give \eqref{eq:gradient-compactness-weak}--\eqref{eq:gradient-compactness-strong} after extraction.  The gradient two-scale compactness theorem of \cite{TachagoNnang2012} provides, after a further extraction, \(\widehat{v}_{1}\in L^{1}(\Omega;W_{\#}^{1}L^{\Phi}(Y))\) with \(\nabla_{y}\widehat{v}_{1}\in L^{\Phi}(\Omega\times Y)^{N}\) and
\(D_{x_{i}}U_{\varepsilon}\overset{2s}{\rightharpoonup}D_{x_{i}}v_{0}+D_{y_{i}}\widehat{v}_{1}\), \(i=1,\ldots,N\).  Let \(v_{1}\) be the restriction of \(\widehat{v}_{1}\) to \(Y^{*}\), with its \(Y^{*}\)-mean subtracted.  Since \(\chi_{\varepsilon}D_{x_{i}}v_{\varepsilon}=\chi_{\varepsilon}D_{x_{i}}U_{\varepsilon}\), Lemma~\ref{lem:solid-phase-restriction} gives \eqref{eq:perforated-gradient-limit}.
\end{proof}

\section{Homogenization procedure}
\label{sec:homogenization}

\subsection{Existence and uniform estimates}
\label{subsec:estimates}

Two elementary facts serve both the microscopic and the homogenized problem: normalization by dilation, and the constrained minimization principle of Lemma~\ref{lem:constrained-minimization}.  For the first, let \(\mathcal{M}\) denote \(\mathcal{M}_{\varepsilon}\) or \(\mathcal{M}_{0}\), and let \(v\) satisfy \(\mathcal{M}(v)>0\).  By \eqref{eq:scaling-large}--\eqref{eq:scaling-small}, the map \(t\mapsto\mathcal{M}(tv)\) is continuous and strictly increasing from \(0\) to \(+\infty\), and the unique \(t>0\) with \(\mathcal{M}(tv)=\alpha\) satisfies
\begin{equation}
\min\bigl\{r^{1/p^{-}},r^{1/p^{+}}\bigr\}
\leq t\leq
\max\bigl\{r^{1/p^{-}},r^{1/p^{+}}\bigr\},
\qquad r=\frac{\alpha}{\mathcal{M}(v)}.
\label{eq:normalization-scaling}
\end{equation}
In particular, the constraint sets in \eqref{eq:intro-minimum} and \eqref{eq:intro-mu0} are nonempty.

\begin{lemma}
\label{lem:constrained-minimization}
Let \(X\) be a reflexive Banach space and let \(\mathcal{E},\mathcal{M}\in C^{1}(X;[0,\infty))\), with \(\mathcal{E}\) convex, \(\mathcal{M}\) sequentially weakly continuous, \(\{\mathcal{E}\leq c\}\) bounded for every \(c\), and
\(\mathcal{M}'(v)[v]\geq p^{-}\mathcal{M}(v)\) for every \(v\in X\).  If \(\alpha>0\) and \(\{\mathcal{M}=\alpha\}\neq\varnothing\), then \(\inf\{\mathcal{E}(v):\mathcal{M}(v)=\alpha\}\) is attained, and for every minimizer \(u\) there is a unique \(\lambda\in\mathbb{R}\) such that, for every \(v\in X\),
\begin{equation}
\mathcal{E}'(u)[v]=\lambda\,\mathcal{M}'(u)[v],
\qquad
\lambda=\frac{\mathcal{E}'(u)[u]}{\mathcal{M}'(u)[u]}.
\label{eq:abstract-multiplier}
\end{equation}
\end{lemma}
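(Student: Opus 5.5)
The proof has two parts: existence of a minimizer by the direct method, and existence and uniqueness of the multiplier by a Lagrange multiplier argument. Uniqueness and the explicit formula for \(\lambda\) will come from testing against \(v=u\).

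\textbf{Existence.} Set \(m=\inf\{\mathcal{E}(v):\mathcal{M}(v)=\alpha\}\). This is finite because the constraint set is nonempty, and \(m\geq0\). Take a minimizing sequence \((v_n)\) with \(\mathcal{E}(v_n)\leq m+1\). The sequence is bounded, since \(\{\mathcal{E}\leq m+1\}\) is bounded. By reflexivity we extract a subsequence with \(v_n\rightharpoonup u\). Since \(\mathcal{E}\) is convex and continuous, it is weakly lower semicontinuous (Mazur), so \(\mathcal{E}(u)\leq m\). Sequential weak continuity of \(\mathcal{M}\) gives \(\mathcal{M}(u)=\lim\mathcal{M}(v_n)=\alpha\). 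Hence \(u\) is admissible and \(\mathcal{E}(u)=m\).

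\textbf{Multiplier.} Fix a minimizer \(u\). The hypothesis gives \(\mathcal{M}'(u)[u]\geq p^{-}\mathcal{M}(u)=p^{-}\alpha>0\), so \(\mathcal{M}'(u)\neq0\), and the constraint is regular at \(u\). I prove the multiplier rule directly rather than invoking a manifold structure, since \(\mathcal{M}\) is only \(C^1\).

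Fix \(v\in X\) and define \(G(s,t)=\mathcal{M}(u+sv+tu)\) near \((0,0)\). This is \(C^1\), with \(\partial_tG(0,0)=\mathcal{M}'(u)[u]\neq0\). The one-dimensional implicit function theorem gives a \(C^1\) function \(t(s)\) with \(t(0)=0\), \(G(s,t(s))=\alpha\), and
\[
t'(0)=-\frac{\mathcal{M}'(u)[v]}{\mathcal{M}'(u)[u]}.
\]
Then \(s\mapsto\mathcal{E}(u+sv+t(s)u)\) is \(C^1\) and has a minimum at \(s=0\). Setting its derivative to zero gives
\[
\mathcal{E}'(u)[v]+t'(0)\,\mathcal{E}'(u)[u]=0,
\]
that is, \(\mathcal{E}'(u)[v]=\lambda\,\mathcal{M}'(u)[v]\) with
\[
\lambda=\frac{\mathcal{E}'(u)[u]}{\mathcal{M}'(u)[u]}.
\]
For uniqueness, suppose \(\lambda\) is any number satisfying the identity. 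Testing with \(v=u\) and dividing by \(\mathcal{M}'(u)[u]\neq0\) forces the same formula, so \(\lambda\) is unique.

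The only delicate point is the regularity of the constraint. It is exactly the condition \(\mathcal{M}'(u)[u]>0\), which is where the index bound \(p^{-}\) enters. Everything else is routine.
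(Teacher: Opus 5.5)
Your proof is correct and follows the paper's argument. Existence is by the direct method: bounded sublevel sets, reflexivity, weak lower semicontinuity of the convex continuous \(\mathcal{E}\), and weak continuity of \(\mathcal{M}\). The multiplier comes from regularity of the constraint, \(\mathcal{M}'(u)[u]\geq p^{-}\alpha>0\), and testing with \(v=u\) gives the formula and uniqueness. The only difference is that the paper cites the Lagrange multiplier rule for \(C^{1}\) constraints from Zeidler, whereas you prove the single-constraint case directly via the implicit function theorem applied to \(G(s,t)=\mathcal{M}(u+sv+tu)\); that is a valid, self-contained substitute.
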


\begin{proof}
A minimizing sequence is bounded; by reflexivity, a subsequence converges weakly to some \(u\), which satisfies \(\mathcal{M}(u)=\alpha\) by weak continuity and is a minimizer because \(\mathcal{E}\) is convex and continuous, hence weakly lower semicontinuous.  Since \(\mathcal{M}'(u)[u]\geq p^{-}\alpha>0\), the level set is regular at \(u\), and the Lagrange multiplier rule for \(C^{1}\) constraints in Banach spaces \cite{Zeidler1985} gives \eqref{eq:abstract-multiplier}; testing with \(v=u\) identifies \(\lambda\), which is therefore unique.
\end{proof}

\begin{definition}
\label{def:normalized-eigencouple}
A normalized principal eigencouple of mass \(\alpha\) is a pair
\(\bigl(\lambda_{1,\varepsilon}(\alpha),u_{\varepsilon,\alpha}\bigr)\in\mathbb{R}\times X_{\varepsilon}\)
such that \(\mathcal{M}_{\varepsilon}(u_{\varepsilon,\alpha})=\alpha\),
\(\mathcal{E}_{\varepsilon}(u_{\varepsilon,\alpha})=\mu_{1,\varepsilon}(\alpha)\), and \eqref{eq:intro-weak} holds.  Here ``principal'' refers to the first constrained variational level.
\end{definition}

\begin{remark}
\label{rem:multipliers}
By Lemma~\ref{lem:constrained-minimization}, each minimizer \(u\) of \eqref{eq:intro-minimum} determines a unique multiplier \(\lambda_{\varepsilon}[u]=\mathcal{E}_{\varepsilon}'(u)[u]/\mathcal{M}_{\varepsilon}'(u)[u]\), given by \eqref{eq:intro-multiplier-ratio}; similarly, each minimizer \(u\) of \eqref{eq:intro-mu0} determines a unique multiplier \(\lambda_{0}[u]\) (Proposition~\ref{prop:homogenized-existence}).  When the minimizers are not unique, these multipliers may depend on \(u\).  Throughout, \(\lambda_{1,\varepsilon}(\alpha)=\lambda_{\varepsilon}[u_{\varepsilon,\alpha}]\) and \(\lambda_{1,0}(\alpha)=\lambda_{0}[u_{0,\alpha}]\) denote the multipliers of the selected minimizers.  In the homogeneous \(p\)-model, \(\lambda_{\varepsilon}[u]=\mu_{1,\varepsilon}(\alpha)/\alpha\) for every minimizer \(u\) (Subsection~\ref{subsec:benchmarks}), so that the multiplier is independent of the selection.
\end{remark}

\begin{theorem}
\label{thm:micro-existence}
For every \(\alpha>0\) and \(\varepsilon\in E\), the minimum \(\mu_{1,\varepsilon}(\alpha)\) is attained, every minimizer gives a normalized principal eigencouple, and a minimizer may be chosen nonnegative.
\end{theorem}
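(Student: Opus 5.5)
The plan is to apply Lemma~\ref{lem:constrained-minimization} with \(X=X_{\varepsilon}\), \(\mathcal{E}=\mathcal{E}_{\varepsilon}\) and \(\mathcal{M}=\mathcal{M}_{\varepsilon}\), at fixed \(\varepsilon\in E\), and then to obtain nonnegativity by replacing a minimizer with its absolute value. First, \(X_{\varepsilon}\) is a closed subspace of \(W^{1}L^{\Phi}(\Omega_{\varepsilon})\), since the trace on \(\partial\Omega\) is continuous; it is therefore reflexive under \(\Delta_{2}\) for \(\Phi\) and \(\widetilde\Phi\). On \(X_{\varepsilon}\) the quantity \(\lVert\nabla v\rVert_{\Phi,\Omega_{\varepsilon}}\) is an equivalent norm. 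To see this, apply \eqref{eq:Orlicz-Poincare} to \(P_{\varepsilon}v\in W^{1}_{0}L^{\Phi}(\Omega)\), then use Proposition~\ref{prop:uniform-extension} and \eqref{eq:modular-small-norm}. Combined with \eqref{eq:F-growth}, this bounds every sublevel set \(\{\mathcal{E}_{\varepsilon}\le c\}\) in \(X_{\varepsilon}\).

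Convexity of \(\mathcal{E}_{\varepsilon}\) follows from \((H_2)\). The \(C^{1}\) property, with \(\mathcal{E}_{\varepsilon}'(u)[v]=\int A(x/\varepsilon,\nabla u)\cdot\nabla v\), is standard. The Gateaux derivative is obtained by dominated convergence, using the convexity difference-quotient bound together with \eqref{eq:A-growth} and Orlicz--H\"older. Continuity of \(u\mapsto A(\cdot,\nabla u)\) into \(L^{\widetilde\Phi}\) is Lemma~\ref{lem:nonlinear-modular-continuity}(b). For \(\mathcal{M}_{\varepsilon}\), the trace map \(X_{\varepsilon}\to L^{\Phi}(\Gamma_{\varepsilon})\) is compact by Proposition~\ref{prop:compact-Orlicz-trace}. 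The functional \(w\mapsto\int_{\Gamma_\varepsilon}\rho\,\Phi(|w|)\) is \(C^{1}\) on \(L^{\Phi}(\Gamma_{\varepsilon})\), with derivative \(\int\rho\,\phi(|w|)\operatorname{sgn}(w)h\). This uses \eqref{eq:phi-complement-bound} and Lemma~\ref{lem:nonlinear-modular-continuity}(a), applied on the fixed finite measure \(\varepsilon\rho\,d\sigma\), for the continuity of \(w\mapsto\phi(|w|)\operatorname{sgn}w\) into \(L^{\widetilde\Phi}\); the same Vitali argument gives this continuity. Composing with the compact linear trace makes \(\mathcal{M}_{\varepsilon}\) both \(C^{1}\) and sequentially weakly continuous.

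Next comes the inequality \(\mathcal{M}_{\varepsilon}'(v)[v]=\varepsilon\int\rho\,\phi(|v|)|v|\ge p^{-}\mathcal{M}_{\varepsilon}(v)\), which is \eqref{eq:Phi-indices} applied pointwise. The constraint set is nonempty by \eqref{eq:normalization-scaling}, provided some \(v\in X_{\varepsilon}\) has \(\mathcal{M}_{\varepsilon}(v)>0\). Such a \(v\) exists: take a \(C^{\infty}_{c}(\Omega)\) function equal to \(1\) near \(\Gamma_{\varepsilon}\), and use \(\beta_{\rho}>0\) with \(K_{\varepsilon}\neq\varnothing\). The lemma then yields existence of a minimizer, and for every minimizer a unique multiplier with \(\mathcal{E}_{\varepsilon}'(u)=\lambda\mathcal{M}_{\varepsilon}'(u)\). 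Written out, this is exactly \eqref{eq:intro-weak}, so every minimizer gives a normalized principal eigencouple in the sense of Definition~\ref{def:normalized-eigencouple}.

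For nonnegativity, let \(u\) be a minimizer. Then \(|u|\in X_{\varepsilon}\), with \(\nabla|u|=\operatorname{sgn}(u)\nabla u\) a.e. (and zero on \(\{u=0\}\)), and \(\gamma_{0}|u|=|\gamma_{0}u|\), so the trace still vanishes on \(\partial\Omega\) and \(\mathcal{M}_{\varepsilon}(|u|)=\mathcal{M}_{\varepsilon}(u)=\alpha\). Evenness \eqref{eq:F-even} together with \(F(y,0)=0\) gives \(\mathcal{E}_{\varepsilon}(|u|)=\mathcal{E}_{\varepsilon}(u)\). Hence \(|u|\) is also a minimizer, and by the previous step it carries its own multiplier. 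The main technical point is the \(C^{1}\) regularity and weak continuity of the surface modular on \(X_{\varepsilon}\). The rest is bookkeeping, because the trace compactness at fixed \(\varepsilon\) has already been established in Proposition~\ref{prop:compact-Orlicz-trace}.
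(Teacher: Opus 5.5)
Your proposal is correct and follows the paper's proof essentially step by step. You apply Lemma~\ref{lem:constrained-minimization} on \(X_{\varepsilon}\) and bound the sublevel sets via the Poincar\'e inequality applied to \(P_{\varepsilon}v\) together with Proposition~\ref{prop:uniform-extension}. You obtain \(C^{1}\) regularity and sequential weak continuity of \(\mathcal{M}_{\varepsilon}\) from the compact trace and the Nemytskii lemma, use \eqref{eq:Phi-indices} for \(\mathcal{M}_{\varepsilon}'(v)[v]\geq p^{-}\mathcal{M}_{\varepsilon}(v)\), and replace \(u\) by \(\lvert u\rvert\) for nonnegativity. Your additional details only make explicit points the paper leaves implicit: the explicit competitor equal to \(1\) near \(\Gamma_{\varepsilon}\), the \(L^{\widetilde{\Phi}}\)-continuity of \(w\mapsto\phi(\lvert w\rvert)\operatorname{sgn}(w)\) needed for a genuine \(C^{1}\) derivative, and \(F(y,0)=0\) on \(\{u=0\}\).
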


\begin{proof}
Apply Lemma~\ref{lem:constrained-minimization} with \(X=X_{\varepsilon}\).  Since \(P_{\varepsilon}v=v\) in \(\Omega_{\varepsilon}\), the modular Poincar\'e inequality \eqref{eq:Orlicz-Poincare} in \(\Omega\) and Proposition~\ref{prop:uniform-extension} give
\[
\int_{\Omega_{\varepsilon}}\Phi(\lvert v\rvert)\,dx
\leq
\int_{\Omega}\Phi\bigl(d_{\Omega}\lvert\nabla P_{\varepsilon}v\rvert\bigr)\,dx
\leq
C\int_{\Omega_{\varepsilon}}\sum_{i=1}^{N}\Phi(C\lvert D_{x_{i}}v\rvert)\,dx,
\]
with \(C\) independent of \(\varepsilon\); together with \eqref{eq:F-growth}, \eqref{eq:fixed-scaling} and \eqref{eq:modular-small-norm}, this bounds the sublevel sets of \(\mathcal{E}_{\varepsilon}\).  By \eqref{eq:A-growth}, \eqref{eq:phi-complement-bound}, Lemma~\ref{lem:nonlinear-modular-continuity} and the continuity of the trace, \(\mathcal{E}_{\varepsilon}\) and \(\mathcal{M}_{\varepsilon}\) are of class \(C^{1}\), with derivatives given by the two sides of \eqref{eq:intro-weak}.  Proposition~\ref{prop:compact-Orlicz-trace} and Lemma~\ref{lem:nonlinear-modular-continuity} show that \(\mathcal{M}_{\varepsilon}\) is sequentially weakly continuous, and \eqref{eq:Phi-indices} gives \(\mathcal{M}_{\varepsilon}'(v)[v]\geq p^{-}\mathcal{M}_{\varepsilon}(v)\).  Finally, \(\lvert u\rvert\in X_{\varepsilon}\), \(\nabla\lvert u\rvert=\operatorname{sgn}(u)\nabla u\) almost everywhere, and \eqref{eq:F-even} show that \(\lvert u\rvert\) is a minimizer whenever \(u\) is.
\end{proof}

Testing \eqref{eq:intro-weak} with \(u_{\varepsilon,\alpha}\) gives \eqref{eq:intro-multiplier-ratio}.

\begin{theorem}
\label{thm:uniform-estimates}
Fix \(\alpha>0\), and for every \(\varepsilon\in E\) let \(u_{\varepsilon,\alpha}\) be a nonnegative normalized minimizer.  There are \(\varepsilon_{0}>0\) and positive constants \(c_{\alpha},C_{\alpha}\) such that, for every \(\varepsilon\in E\) with \(\varepsilon<\varepsilon_{0}\),
\begin{align}
c_{\alpha}
\leq
\mu_{1,\varepsilon}(\alpha)
&\leq
C_{\alpha},
\label{eq:uniform-mu}
\\
\int_{\Omega_{\varepsilon}}
\Phi(\lvert\nabla u_{\varepsilon,\alpha}\rvert)\,dx
&\leq
C_{\alpha},
\label{eq:uniform-gradient}
\\
\int_{\Omega}
\left[
\Phi(\lvert P_{\varepsilon}u_{\varepsilon,\alpha}\rvert)
+
\sum_{i=1}^{N}
\Phi(\lvert D_{x_{i}}P_{\varepsilon}u_{\varepsilon,\alpha}\rvert)
\right]\,dx
&\leq
C_{\alpha},
\label{eq:uniform-extension}
\\
c_{\alpha}
\leq
\lambda_{1,\varepsilon}(\alpha)
&\leq
C_{\alpha},
\label{eq:uniform-lambda}
\\
\varepsilon
\int_{\Gamma_{\varepsilon}}
\widetilde{\Phi}
\left(
\frac{
\phi(\lvert u_{\varepsilon,\alpha}\rvert)
}{C_{\alpha}}
\right)
d\sigma_{x}
&\leq
C_{\alpha}.
\label{eq:uniform-boundary-dual}
\end{align}
\end{theorem}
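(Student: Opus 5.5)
The upper bound in \eqref{eq:uniform-mu} comes from one fixed competitor. Take a nonzero \(\varphi\in C_{c}^{\infty}(\Omega)\). By Proposition~\ref{prop:nonlinear-surface-compactness}, applied with \(U_{\varepsilon}=\varphi\) and \(v_{\varepsilon}=\varphi|_{\Omega_{\varepsilon}}\), we have \(\mathcal{M}_{\varepsilon}(\varphi)\to\beta_{\rho}\int_{\Omega}\Phi(\lvert\varphi\rvert)\,dx>0\). Hence there is \(\varepsilon_{0}\) such that \(\mathcal{M}_{\varepsilon}(\varphi)\) lies in a fixed compact subinterval of \((0,\infty)\) for \(\varepsilon<\varepsilon_{0}\). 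By \eqref{eq:normalization-scaling}, the dilation factor \(t_{\varepsilon}\) with \(\mathcal{M}_{\varepsilon}(t_{\varepsilon}\varphi)=\alpha\) is then bounded. Using \eqref{eq:F-growth} and \eqref{eq:fixed-scaling},
\[
\mu_{1,\varepsilon}(\alpha)\leq c_{1}\int_{\Omega}\Phi(t_{\varepsilon}\lvert\nabla\varphi\rvert)\,dx\leq C_{\alpha}.
\]
Then \eqref{eq:uniform-gradient} follows from \(c_{0}\int_{\Omega_{\varepsilon}}\Phi(\lvert\nabla u_{\varepsilon,\alpha}\rvert)\,dx\leq\mu_{1,\varepsilon}(\alpha)\). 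For \eqref{eq:uniform-extension}, combine Proposition~\ref{prop:uniform-extension} with \eqref{eq:fixed-scaling} and \eqref{eq:vector-component-modular-equivalence} for the gradient part. For the zero-order part, use the Poincaré inequality \eqref{eq:Orlicz-Poincare} applied to \(P_{\varepsilon}u_{\varepsilon,\alpha}\in W_{0}^{1}L^{\Phi}(\Omega)\), followed again by \eqref{eq:fixed-scaling}.

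For the lower bound in \eqref{eq:uniform-mu}, start from the scaled trace inequality \eqref{eq:scaled-trace-index} and \(\rho\leq\rho_{+}\):
\[
\alpha=\mathcal{M}_{\varepsilon}(u_{\varepsilon,\alpha})\leq C\int_{\Omega_{\varepsilon}}\Bigl[\Phi(C\lvert u_{\varepsilon,\alpha}\rvert)+\sum_{i=1}^{N}\Phi(C\lvert D_{x_{i}}u_{\varepsilon,\alpha}\rvert)\Bigr]dx .
\]
As in the proof of Theorem~\ref{thm:micro-existence}, the uniform Poincaré bound \(\int_{\Omega_{\varepsilon}}\Phi(C\lvert v\rvert)\,dx\leq C'\int_{\Omega_{\varepsilon}}\sum_{i}\Phi(C'\lvert D_{x_{i}}v\rvert)\,dx\) holds. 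The right-hand side is therefore at most \(C\int_{\Omega_{\varepsilon}}\Phi(C\lvert\nabla u_{\varepsilon,\alpha}\rvert)\,dx\). If this integral were small, \eqref{eq:scaling-small}/\eqref{eq:modular-small-norm} would pass the factor \(C\) through at a power cost. The clean route is norms: set \(s=\lVert\nabla u_{\varepsilon,\alpha}\rVert_{\Phi,\Omega_{\varepsilon}}\). Then \(\int\Phi(C\lvert\nabla u\rvert)\leq\max\{(Cs)^{p^{-}},(Cs)^{p^{+}}\}\), which gives a lower bound \(s\geq s_{\alpha}>0\). From \eqref{eq:modular-small-norm}, \(\mu_{1,\varepsilon}(\alpha)\geq c_{0}\min\{s^{p^{-}},s^{p^{+}}\}\geq c_{\alpha}\).

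For \eqref{eq:uniform-lambda}, use the ratio \eqref{eq:intro-multiplier-ratio}. By \eqref{eq:A-pairing}, the numerator lies between \(a_{0}\int_{\Omega_{\varepsilon}}\Phi(\lvert\nabla u\rvert)\,dx\geq a_{0}\mu_{1,\varepsilon}(\alpha)/c_{1}\geq c\) and \(a_{1}C_{\alpha}/c_{0}\). By \eqref{eq:Phi-indices} and \(0\leq\rho\leq\rho_{+}\), the denominator lies between \(p^{-}\alpha\) and \(p^{+}\alpha\). I read the denominator as the weighted integral, that is, as \(\mathcal{M}_{\varepsilon}'(u)[u]\), so the bounds \(p^{-}\mathcal{M}_{\varepsilon}\leq\cdot\leq p^{+}\mathcal{M}_{\varepsilon}\) apply exactly.

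Finally, for \eqref{eq:uniform-boundary-dual}, use \eqref{eq:phi-complement-bound}: \(\widetilde{\Phi}(\phi(s))\leq(p^{+}-1)\Phi(s)\). Since \(\widetilde\Phi(\tau/C)\leq\widetilde\Phi(\tau)\) for \(C\geq1\), it suffices to bound \(\varepsilon\int_{\Gamma_{\varepsilon}}\Phi(\lvert u_{\varepsilon,\alpha}\rvert)\,d\sigma_{x}\) uniformly. This is the main subtlety: \(\rho\) may vanish on part of \(\partial T\), so the normalization \(\mathcal{M}_{\varepsilon}=\alpha\) does not control the unweighted surface modular. I therefore control it by \eqref{eq:scaled-trace-index} together with the bounds already obtained: \(\int_{\Omega_{\varepsilon}}\Phi(C\lvert u\rvert)\,dx\) is controlled via Poincaré and \eqref{eq:uniform-gradient}, and the gradient term via \eqref{eq:fixed-scaling} and \eqref{eq:uniform-gradient}. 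Taking \(C_{\alpha}\) to be the maximum of all the constants produced then gives every estimate simultaneously.
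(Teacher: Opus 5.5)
Your proposal is correct and follows essentially the same route as the paper. Both use a fixed compactly supported competitor with bounded dilation factors for the upper bound; the scaled trace inequality \eqref{eq:scaled-trace-index} together with the uniform Poincar\'e/extension bounds for the lower bound and for \eqref{eq:uniform-boundary-dual}; and the ratio \eqref{eq:intro-multiplier-ratio} with \eqref{eq:Phi-indices} and \eqref{eq:A-pairing} for the multiplier. The only difference is your Luxemburg-norm detour in the lower bound, which is unnecessary: \eqref{eq:fixed-scaling} absorbs the constant directly, since \(\Phi(Ct)\le C(C)\Phi(t)\), and this gives \(\int_{\Omega_\varepsilon}\Phi(\lvert\nabla u_{\varepsilon,\alpha}\rvert)\,dx\geq c_\alpha\) at once.
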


\begin{proof}
Fix \(0\leq z\in C_{c}^{\infty}(\Omega)\), \(z\not\equiv0\).  Proposition~\ref{prop:nonlinear-surface-compactness}, applied with \(v_{\varepsilon}=z|_{\Omega_{\varepsilon}}\) and \(U_{\varepsilon}=z\), gives \(\mathcal{M}_{\varepsilon}(z)\to\beta_{\rho}\int_{\Omega}\Phi(z)\,dx>0\).  By \eqref{eq:normalization-scaling}, the dilation factors \(t_{\varepsilon}\) with \(\mathcal{M}_{\varepsilon}(t_{\varepsilon}z)=\alpha\) are bounded, and
\(\mu_{1,\varepsilon}(\alpha)\leq\mathcal{E}_{\varepsilon}(t_{\varepsilon}z)\leq c_{1}\int_{\Omega}\Phi(t_{\varepsilon}\lvert\nabla z\rvert)\,dx\leq C_{\alpha}\).  The lower bound in \eqref{eq:F-growth} gives \eqref{eq:uniform-gradient}, and Proposition~\ref{prop:uniform-extension}, \eqref{eq:Orlicz-Poincare} and \eqref{eq:fixed-scaling} give \eqref{eq:uniform-extension}.

For the lower bound, the normalization, \eqref{eq:scaled-trace-index}, \eqref{eq:Orlicz-Poincare} applied to \(P_{\varepsilon}u_{\varepsilon,\alpha}\) and Proposition~\ref{prop:uniform-extension} give
\begin{align*}
\alpha
&\leq
\rho_{+}\varepsilon\int_{\Gamma_{\varepsilon}}\Phi(\lvert u_{\varepsilon,\alpha}\rvert)\,d\sigma_{x}
\leq
C\int_{\Omega}\Phi(C\lvert P_{\varepsilon}u_{\varepsilon,\alpha}\rvert)\,dx
+C\int_{\Omega_{\varepsilon}}\sum_{i=1}^{N}\Phi(C\lvert D_{x_{i}}u_{\varepsilon,\alpha}\rvert)\,dx
\\
&\leq
C\int_{\Omega_{\varepsilon}}\Phi(\lvert\nabla u_{\varepsilon,\alpha}\rvert)\,dx,
\end{align*}
where the last inequality also uses \eqref{eq:fixed-scaling} and \eqref{eq:vector-component-modular-equivalence}.  Hence \(\int_{\Omega_{\varepsilon}}\Phi(\lvert\nabla u_{\varepsilon,\alpha}\rvert)\,dx\geq c_{\alpha}>0\), and \eqref{eq:F-growth} gives the lower bound in \eqref{eq:uniform-mu}.

In \eqref{eq:intro-multiplier-ratio}, the denominator lies in \([p^{-}\alpha,p^{+}\alpha]\) by \eqref{eq:Phi-indices}, and the numerator lies between \(a_{0}\) and \(a_{1}\) times \(\int_{\Omega_{\varepsilon}}\Phi(\lvert\nabla u_{\varepsilon,\alpha}\rvert)\,dx\) by \eqref{eq:A-pairing}; this proves \eqref{eq:uniform-lambda}.  Finally, \eqref{eq:phi-complement-bound} gives \(\widetilde{\Phi}(\phi(\lvert u_{\varepsilon,\alpha}\rvert))\leq(p^{+}-1)\Phi(\lvert u_{\varepsilon,\alpha}\rvert)\) on \(\Gamma_{\varepsilon}\), and \eqref{eq:scaled-trace-index} with \eqref{eq:uniform-gradient}--\eqref{eq:uniform-extension} proves \eqref{eq:uniform-boundary-dual}.
\end{proof}

\subsection{Cell problem, homogenized problem and recovery}
\label{subsec:macroscopic}

We now construct the objects of the limit problem: the cell correctors and the effective law, the homogenized eigencouple, and a recovery sequence compatible with the surface constraint.

\begin{lemma}
\label{lem:affine-periodic-coercivity}
There is \(C>0\) such that, for every
\(\xi\in\mathbb{R}^{N}\) and every
\(w\in W_{\#}^{1}L^{\Phi}(Y^{*})\),
\begin{equation}
\Phi(\lvert\xi\rvert)
+
\int_{Y^{*}}
\sum_{i=1}^{N}
\Phi(\lvert D_{y_{i}}w\rvert)\,dy
\leq
C
\int_{Y^{*}}
\sum_{i=1}^{N}
\Phi
\left(
C\lvert\xi_{i}+D_{y_{i}}w\rvert
\right)\,dy.
\label{eq:affine-periodic-coercivity}
\end{equation}
\end{lemma}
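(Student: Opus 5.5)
The plan is to control \(\xi\) and \(\nabla_y w\) separately by the right-hand side, which I abbreviate as
\[
R=C\int_{Y^{*}}\sum_{i=1}^{N}\Phi\bigl(C\lvert\xi_{i}+D_{y_{i}}w\rvert\bigr)\,dy .
\]
The key fact is that \(\xi\cdot y+w(y)\) has periodic increments. Once \(\Phi(\lvert\xi\rvert)\lesssim R\) is known, the gradient term follows from convexity. Indeed, \(D_{y_i}w=(\xi_i+D_{y_i}w)-\xi_i\), so
\[
\Phi(\lvert D_{y_i}w\rvert)\le\tfrac12\Phi\bigl(2\lvert\xi_i+D_{y_i}w\rvert\bigr)+\tfrac12\Phi(2\lvert\xi_i\rvert).
\]
Integrating over \(Y^{*}\), using \(\lvert Y^{*}\rvert\le1\), \eqref{eq:fixed-scaling} and \eqref{eq:vector-component-modular-equivalence}, the gradient term is bounded by a constant times \(R+\Phi(\lvert\xi\rvert)\). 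So the whole proof reduces to bounding \(\Phi(\lvert\xi\rvert)\).

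To bound \(\xi\), I would first reduce to \(w\) smooth on \(\overline{Y^*}\) and periodic. This uses density, which holds under \(\Delta_2\), together with continuity of both sides in the modular (Lemma~\ref{lem:nonlinear-modular-continuity}(b)). Then I use straight segments crossing the cell in direction \(e_i\) that avoid the hole. Since \(\overline T\subset Y\) is compact, there is a strip \(S=\{y\in Y: \operatorname{dist}(y_j',\partial(0,1))<\delta \text{ for some } j\neq i\}\) of positive measure in the transverse variables \(y'\), with \(S\subset Y^{*}\). For every transverse point in that set the whole segment \(\{(t,y'):t\in(0,1)\}\) lies in \(Y^{*}\). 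Along such a segment, periodicity gives \(\int_0^1 D_{y_i}w\,dt=0\), hence
\[
\xi_i=\int_0^1(\xi_i+D_{y_i}w)(t,y')\,dt .
\]
Jensen then gives \(\Phi(\lvert\xi_i\rvert)\le\int_0^1\Phi(\lvert\xi_i+D_{y_i}w\rvert)\,dt\). Averaging over the admissible set of \(y'\), which has measure \(c_\delta>0\), yields
\[
\Phi(\lvert\xi_i\rvert)\le c_\delta^{-1}\int_{Y^{*}}\Phi(\lvert\xi_i+D_{y_i}w\rvert)\,dy .
\]
Summing over \(i\) and using \eqref{eq:vector-component-modular-equivalence} bounds \(\Phi(\lvert\xi\rvert)\) by \(R\).

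The main obstacle is justifying the segment argument for general elements of \(W_{\#}^{1}L^{\Phi}(Y^{*})\), namely that periodicity of traces implies zero increment along a.e. line. I would avoid this by working with smooth periodic approximants. Concretely, I would use functions in \(C^\infty_{\mathrm{per}}\) restricted to \(Y^*\); their density in \(W^1_{\mathrm{per}}L^{\Phi}(Y^{*})\) follows by applying the extension \(P\) of Lemma~\ref{lem:cell-extension-trace}, extending periodically and mollifying. I would then pass to the limit on both sides of the inequality. The mean-zero constraint plays no role here.
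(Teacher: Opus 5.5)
Your proof is correct, but it bounds \(\xi\) by a different mechanism than the paper.

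\emph{The paper's route.} It extends the quasi-periodic function \(\ell(y)=\xi\cdot y+w(y)\) to the whole cell with the operator \(P\) of Lemma~\ref{lem:cell-extension-trace}. Since \(P\ell-\xi\cdot y\) is periodic, \(\int_Y\nabla_y P\ell\,dy=\xi\). It then applies Jensen on all of \(Y\) and returns to \(Y^*\) through the gradient estimate \eqref{eq:cell-extension-one}. The final absorption of \(\nabla_y w\) is the same convexity step as yours.

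\emph{Your route.} You use only that \(\overline T\) has positive distance from \(\partial Y\). Segments parallel to \(e_i\) whose transverse coordinates lie in the \(\delta\)-collar of some face \(y_j\in\{0,1\}\), \(j\neq i\), lie entirely in \(Y^*\); such a \(j\) exists because \(N\ge2\). Periodicity kills the increment of \(w\) along these segments. One-dimensional Jensen and averaging over the transverse set then give \(\Phi(|\xi_i|)\le c_\delta^{-1}\int_{Y^*}\Phi(|\xi_i+D_{y_i}w|)\,dy\).

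\emph{What each buys.} Your argument is more elementary, and its constant depends only on \(N\) and \(\operatorname{dist}(T,\partial Y)\). It does not depend on the extension constant \(C_T\), which involves the reflection construction near \(\partial T\) and the Poincar\'e inequality on \(Y^*\). The extension enters your proof only qualitatively, in the density step, exactly as the paper later uses it to approximate elements of \(W_{\#}^{1}L^{\Phi}(Y^*)\) by smooth periodic restrictions. The paper's argument is shorter once \(P\) and \eqref{eq:cell-extension-one} are available. It needs no separate approximation step beyond the standard fact that the derivative of a periodic Sobolev function has zero cell mean.
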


\begin{proof}
Consider the quasiperiodic function
\[
\ell(y)=\xi\cdot y+w(y)
\quad\text{in }Y^{*}.
\]
Since \(\ell\in W^{1}L^{\Phi}(Y^{*})\), we may put
\(\widehat{\ell}=P\ell\in W^{1}L^{\Phi}(Y)\), where \(P\) is the extension operator of
Lemma~\ref{lem:cell-extension-trace}.  As \(\overline{T}\subset Y\) and \(P\ell=\ell\) in \(Y^{*}\), the function \(\widehat{\ell}\) coincides with \(\ell\) near \(\partial Y\); hence

\[
\widehat{\ell}(y+e_{i})-\widehat{\ell}(y)=\xi_{i}
\quad\text{on the corresponding opposite faces},
\qquad i=1,\ldots,N,
\]
and
\begin{equation}
\int_{Y}
\sum_{i=1}^{N}
\Phi(\lvert D_{y_{i}}\widehat{\ell}\rvert)\,dy
\leq
C
\int_{Y^{*}}
\sum_{i=1}^{N}
\Phi
\left(
C\lvert\xi_{i}+D_{y_{i}}w\rvert
\right)\,dy.
\label{eq:quasiperiodic-extension}
\end{equation}
by \eqref{eq:cell-extension-one} applied to \(\ell\), since \(D_{y_{i}}\ell=\xi_{i}+D_{y_{i}}w\) in \(Y^{*}\).  The function
\(\widehat{\ell}(y)-\xi\cdot y\) is periodic.  Hence, for every
\(i=1,\ldots,N\),
\[
\frac{1}{\lvert Y\rvert}
\int_{Y}D_{y_{i}}\widehat{\ell}(y)\,dy
=
\xi_{i}.
\]
Jensen's inequality, followed by convexity across the \(N\) components, gives
\[
\Phi(\lvert\xi\rvert)
\leq
C
\int_{Y}
\sum_{i=1}^{N}
\Phi(C\lvert D_{y_{i}}\widehat{\ell}\rvert)\,dy.
\]
Combine this with \eqref{eq:quasiperiodic-extension}.  Finally,
\[
\lvert D_{y_{i}}w\rvert
\leq
\lvert\xi_{i}+D_{y_{i}}w\rvert+\lvert\xi_{i}\rvert,
\qquad i=1,\ldots,N,
\]
and \(\Delta_{2}\) absorb the second term.  This proves
\eqref{eq:affine-periodic-coercivity}.
\end{proof}

\begin{proposition}
\label{prop:cell-minimizer}
For every \(\xi\in\mathbb{R}^{N}\), the infimum
\eqref{eq:intro-Fhom} is attained at a unique
\(w_{\xi}\in W_{\#}^{1}L^{\Phi}(Y^{*})\), characterized by the following property: for every \(\zeta\in W_{\#}^{1}L^{\Phi}(Y^{*})\),
\begin{equation}
\int_{Y^{*}}
\sum_{i=1}^{N}
A_{i}\bigl(y,\xi+\nabla_{y}w_{\xi}\bigr)
D_{y_{i}}\zeta\,dy
=0.
\label{eq:cell-variational-equation}
\end{equation}
The function \(F_{\mathrm{hom}}\) is even, convex and of class \(C^{1}\), and there are \(0<c\leq C\) such that, for every \(\xi\in\mathbb{R}^{N}\),
\begin{align}
c\Phi(\lvert\xi\rvert)
\leq
F_{\mathrm{hom}}(\xi)
&\leq
C\Phi(\lvert\xi\rvert),
\label{eq:Fhom-growth}
\\
D_{\xi}F_{\mathrm{hom}}(\xi)
&=
A_{\mathrm{hom}}(\xi)
=
\int_{Y^{*}}
A\bigl(y,\xi+\nabla_{y}w_{\xi}\bigr)\,dy,
\label{eq:Fhom-derivative}
\\
\widetilde\Phi\!\left(\frac{|A_{\mathrm{hom}}(\xi)|}{c_2}\right)
&\leq
\frac{c_1}{c_0}\,\Phi(|\xi|).
\label{eq:Ahom-complementary-growth}
\end{align}
Consequently, \(\mathcal E_0\in C^{1}(W_0^1L^\Phi(\Omega))\) and, for every \(u,v\in W_0^1L^\Phi(\Omega)\),
\begin{equation}
\mathcal E_0'(u)[v]
=\int_\Omega A_{\mathrm{hom}}(\nabla u)\cdot\nabla v\,dx.
\label{eq:homogenized-energy-C1}
\end{equation}
\end{proposition}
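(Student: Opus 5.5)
The plan is to use the direct method in the reflexive space \(W_{\#}^{1}L^{\Phi}(Y^{*})\) and then derive the properties of \(F_{\mathrm{hom}}\) from those of \(F\) by convex-analytic arguments.

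\emph{Cell minimizer.} Fix \(\xi\). The functional \(J_\xi(w)=\int_{Y^*}F(y,\xi+\nabla_y w)\,dy\) is finite on \(W_{\#}^{1}L^{\Phi}(Y^{*})\) by \eqref{eq:F-growth} and \(\Delta_2\). It is convex and strictly convex in \(\nabla_y w\). By Lemma~\ref{lem:nonlinear-modular-continuity}(b) applied to \(\mathcal G=F\), it is strongly continuous, hence weakly lower semicontinuous. Coercivity comes from \eqref{eq:affine-periodic-coercivity} combined with the lower bound in \eqref{eq:F-growth}: on the right-hand side, \(\Phi(C|\xi_i+D_{y_i}w|)\le C'\Phi(|\xi+\nabla_y w|)\) by \eqref{eq:fixed-scaling} and \eqref{eq:vector-component-modular-equivalence}. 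Sublevel sets are therefore bounded in the norm \(\sum_i\|D_{y_i}w\|_{\Phi,Y^*}\) by \eqref{eq:modular-small-norm}. Since the space is reflexive, a minimizer exists. Uniqueness follows because \(\nabla_y w\) is unique by strict convexity, and \(w\) is then determined by the zero-mean constraint together with Lemma~\ref{lem:Lipschitz-Poincare}. Differentiability of \(J_\xi\) (from \eqref{eq:A-growth} and Lemma~\ref{lem:nonlinear-modular-continuity}(b) with \(\mathcal B=A\)) gives the Euler equation \eqref{eq:cell-variational-equation}. Conversely, by convexity any solution of \eqref{eq:cell-variational-equation} is a minimizer.

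\emph{Properties of \(F_{\mathrm{hom}}\).} Evenness follows from \(w_{-\xi}=-w_\xi\), by \eqref{eq:F-even} and uniqueness. Convexity: given \(\xi,\eta\), the admissible corrector \(tw_\xi+(1-t)w_\eta\) and convexity of \(F(y,\cdot)\) give \(F_{\mathrm{hom}}(t\xi+(1-t)\eta)\le tF_{\mathrm{hom}}(\xi)+(1-t)F_{\mathrm{hom}}(\eta)\). For the upper bound in \eqref{eq:Fhom-growth}, take \(w=0\), which gives \(F_{\mathrm{hom}}(\xi)\le c_1|Y^*|\Phi(|\xi|)\). The lower bound follows from \eqref{eq:affine-periodic-coercivity} with \(w=w_\xi\) as above. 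For \eqref{eq:Fhom-derivative}, use the standard envelope argument. On one side, \(F_{\mathrm{hom}}(\xi+h)\le J_{\xi+h}(w_\xi)\), and convexity of \(F\) with the mean-value theorem gives
\[
F_{\mathrm{hom}}(\xi+h)-F_{\mathrm{hom}}(\xi)\le \int_{Y^*}A(y,\xi+h+\nabla w_\xi)\cdot h\,dy,
\]
while \(F_{\mathrm{hom}}(\xi+h)-F_{\mathrm{hom}}(\xi)\ge \int_{Y^*}A(y,\xi+\nabla w_\xi)\cdot h\,dy\) by the subgradient inequality. Dividing by \(|h|\), and using \(A(y,\xi+h+\nabla w_\xi)\to A(y,\xi+\nabla w_\xi)\) in \(L^{\widetilde\Phi}\) (Lemma~\ref{lem:nonlinear-modular-continuity}(b)), gives Fréchet differentiability with gradient \(A_{\mathrm{hom}}(\xi)\). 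Continuity of \(\xi\mapsto A_{\mathrm{hom}}(\xi)\) then follows from the same fact for convex differentiable functions on \(\mathbb R^N\) (Rockafellar), so \(F_{\mathrm{hom}}\in C^1\).

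\emph{Growth of \(A_{\mathrm{hom}}\).} Apply Jensen to \(\widetilde\Phi\) on \(Y^*\). Since \(|Y^*|\le1\), convexity gives \(\widetilde\Phi(|A_{\mathrm{hom}}|/c_2)\le\int_{Y^*}\widetilde\Phi(|A(y,\xi+\nabla w_\xi)|/c_2)\,dy\). By \eqref{eq:A-growth} this is at most \(\int_{Y^*}\Phi(|\xi+\nabla w_\xi|)\le c_0^{-1}F_{\mathrm{hom}}(\xi)\le c_0^{-1}c_1\Phi(|\xi|)\), which is \eqref{eq:Ahom-complementary-growth}. The last claim about \(\mathcal E_0\) then follows exactly as for \(\mathcal E_\varepsilon\) in Theorem~\ref{thm:micro-existence}: Lemma~\ref{lem:nonlinear-modular-continuity}(b) applies with \(\mathcal G=F_{\mathrm{hom}}\) and \(\mathcal B=A_{\mathrm{hom}}\), and the difference-quotient argument uses convexity.

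\emph{Main obstacle.} I expect the delicate step to be the differentiability of \(F_{\mathrm{hom}}\). The issue is justifying the upper difference quotient uniformly, which requires the strong \(L^{\widetilde\Phi}\) continuity of \(A\) under perturbations of \(\xi\) with fixed \(w_\xi\). The justification comes from Lemma~\ref{lem:nonlinear-modular-continuity}(b), since constant shifts \(h\to0\) converge in \(L^\Phi(Y^*)\).
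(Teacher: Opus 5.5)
Your proposal is correct and follows the paper's proof in all essentials. Existence comes from coercivity via Lemma~\ref{lem:affine-periodic-coercivity} plus reflexivity, uniqueness from strict convexity and connectedness of \(Y^{*}\), the growth bounds, convexity and evenness of \(F_{\mathrm{hom}}\) from the competitors \(w=0\), \(\theta w_\xi+(1-\theta)w_\eta\) and \(-w\), the bound on \(A_{\mathrm{hom}}\) from Jensen, and the \(C^1\) property of \(\mathcal E_0\) from Lemma~\ref{lem:nonlinear-modular-continuity}(b).

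The only real variation is in the differentiability of \(F_{\mathrm{hom}}\). You sandwich \(F_{\mathrm{hom}}(\xi+h)-F_{\mathrm{hom}}(\xi)\) between \(A_{\mathrm{hom}}(\xi)\cdot h\) and \(\int_{Y^{*}}A(y,\xi+h+\nabla_y w_\xi)\cdot h\,dy\), which gives Fr\'echet differentiability directly. Your lower bound tacitly uses the cell equation with \(\zeta=w_{\xi+h}-w_\xi\), which you should state. The paper uses only the upper bound, shows that the subdifferential is the singleton \(\{A_{\mathrm{hom}}(\xi)\}\), and then invokes Rockafellar.
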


\begin{proof}
By \eqref{eq:F-growth} and Lemma~\ref{lem:affine-periodic-coercivity}, minimizing sequences are bounded in \(W_{\#}^{1}L^{\Phi}(Y^{*})\); reflexivity and weak lower semicontinuity of the convex integral give a minimizer.  If \(w,\widehat{w}\) are minimizers, strict convexity of \(F(y,\cdot)\) gives \(\nabla_{y}w=\nabla_{y}\widehat{w}\), and connectedness of \(Y^{*}\) with the zero means gives \(w=\widehat{w}\).  Put \(G_{\xi}=\xi+\nabla_{y}w_{\xi}\) and \(Z=\nabla_{y}\zeta\). For \(|t|\leq1\) and \(0\leq s\leq1\), Young's inequality, \eqref{eq:A-growth} and \(\Delta_2\) give
\[
|A(y,G_{\xi}+stZ)\cdot Z|
\leq \Phi(c_2|Z|)+\widetilde\Phi\left(\frac{|A(y,G_{\xi}+stZ)|}{c_2}\right)
\leq C\bigl[\Phi(|G_{\xi}|)+\Phi(|Z|)\bigr].
\]
The right-hand side belongs to \(L^1(Y^*)\). Hence the fundamental theorem of calculus and dominated convergence justify differentiating \(\int_{Y^*}F(y,G_{\xi}+tZ)\,dy\) at zero. Its minimality yields \eqref{eq:cell-variational-equation}.  The choice \(w=0\) gives the upper bound in \eqref{eq:Fhom-growth}, \eqref{eq:F-growth} and Lemma~\ref{lem:affine-periodic-coercivity} give the lower bound, the competitor \(\theta w_{\xi}+(1-\theta)w_{\eta}\) at \(\theta\xi+(1-\theta)\eta\) gives convexity, and the substitution \((\xi,w)\mapsto(-\xi,-w)\) gives evenness.

Fix \(\xi,h\in\mathbb{R}^{N}\) and put \(G_{\xi}=\xi+\nabla_{y}w_{\xi}\).  Since \(w_{\xi}\) is admissible at \(\xi+th\),
\[
F_{\mathrm{hom}}(\xi+th)-F_{\mathrm{hom}}(\xi)
\leq
\Upsilon(t)-\Upsilon(0),
\qquad
\Upsilon(t)=\int_{Y^{*}}F(y,G_{\xi}+th)\,dy.
\]
The convex function \(\Upsilon\) is differentiable, with \(\Upsilon'(0)=A_{\mathrm{hom}}(\xi)\cdot h\), by the preceding domination with \(Z=h\).  Hence the directional derivative satisfies \(F_{\mathrm{hom}}'(\xi;h)\leq A_{\mathrm{hom}}(\xi)\cdot h\) for every \(h\).  If \(\varpi\) belongs to the subdifferential of the finite convex function \(F_{\mathrm{hom}}\) at \(\xi\), then \(\varpi\cdot h\leq F_{\mathrm{hom}}'(\xi;h)\leq A_{\mathrm{hom}}(\xi)\cdot h\) for every \(h\), so \(\varpi=A_{\mathrm{hom}}(\xi)\).  A finite convex function on \(\mathbb{R}^{N}\) whose subdifferential is a singleton at every point is differentiable, and its gradient is continuous \cite[Section~25]{Rockafellar1970}.  This proves \(F_{\mathrm{hom}}\in C^{1}\) and \eqref{eq:Fhom-derivative}.

Extend \(A(y,G_\xi(y))\) by zero to \(T\).  Since \(|Y|=1\), Jensen's inequality, \eqref{eq:A-growth}, \eqref{eq:F-growth} and the upper bound \(F_{\mathrm{hom}}(\xi)\leq c_{1}|Y^*|\Phi(|\xi|)\) give
\[
\widetilde\Phi\!\left(\frac{|A_{\mathrm{hom}}(\xi)|}{c_2}\right)
\le\int_{Y^*}\widetilde\Phi\!\left(\frac{|A(y,G_\xi)|}{c_2}\right)dy
\le \int_{Y^*}\Phi(|G_\xi|)\,dy
\le\frac{1}{c_0}F_{\mathrm{hom}}(\xi),
\]
which is \eqref{eq:Ahom-complementary-growth}.  By Lemma~\ref{lem:nonlinear-modular-continuity}(b) with \(\mathcal{B}=A_{\mathrm{hom}}\), the map \(\nabla u\mapsto A_{\mathrm{hom}}(\nabla u)\) is continuous from \(L^{\Phi}(\Omega)^{N}\) into \(L^{\widetilde{\Phi}}(\Omega)^{N}\).  The fundamental theorem of calculus applied to \(t\mapsto F_{\mathrm{hom}}(\nabla u+t\nabla v)\) and H\"older's inequality \eqref{eq:Orlicz-Holder} give \eqref{eq:homogenized-energy-C1}.
\end{proof}

\begin{proposition}
\label{prop:homogenized-existence}
For every \(\alpha>0\), the minimum
\(\mu_{1,0}(\alpha)\) in \eqref{eq:intro-mu0} is attained by a nonnegative
\(u_{0,\alpha}\in W_{0}^{1}L^{\Phi}(\Omega)\). Every minimizer \(u\) has a unique associated positive multiplier \(\lambda_0[u]\) satisfying the homogenized variational equation with \(u\) in place of \(u_{0,\alpha}\).
\end{proposition}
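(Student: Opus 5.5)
The plan is to apply Lemma~\ref{lem:constrained-minimization} with \(X=W_{0}^{1}L^{\Phi}(\Omega)\), \(\mathcal{E}=\mathcal{E}_{0}\) and \(\mathcal{M}=\mathcal{M}_{0}\). This space is reflexive because \(\Phi,\widetilde\Phi\in\Delta_{2}\), and the norm \(\lVert\nabla v\rVert_{\Phi,\Omega}\) is equivalent to the full norm by \eqref{eq:Orlicz-Poincare}. The hypotheses are then checked one at a time.

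\emph{Energy.} By Proposition~\ref{prop:cell-minimizer}, \(\mathcal{E}_{0}\) is convex, nonnegative and of class \(C^{1}\), with derivative \eqref{eq:homogenized-energy-C1}. The lower bound in \eqref{eq:Fhom-growth} gives
\[
\int_{\Omega}\Phi(\lvert\nabla v\rvert)\,dx\leq c^{-1}\mathcal{E}_{0}(v).
\]
Combined with \eqref{eq:Orlicz-Poincare}, \eqref{eq:fixed-scaling}, \eqref{eq:vector-component-modular-equivalence} and \eqref{eq:modular-small-norm}, this shows that the sublevel sets of \(\mathcal{E}_{0}\) are bounded.

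\emph{Mass.} The functional \(\mathcal{M}_{0}\) is \(C^{1}\), with derivative
\[
\mathcal{M}_{0}'(u)[v]=\beta_{\rho}\int_{\Omega}\phi(\lvert u\rvert)\operatorname{sgn}(u)\,v\,dx.
\]
This follows from Lemma~\ref{lem:nonlinear-modular-continuity}(b) applied with \(\mathcal{B}(z,t)=g(t)\), using \eqref{eq:phi-complement-bound}, together with the fundamental theorem of calculus and \eqref{eq:Orlicz-Holder}, exactly as in the proof of \eqref{eq:homogenized-energy-C1}. For sequential weak continuity, a weakly convergent sequence in \(W_{0}^{1}L^{\Phi}(\Omega)\) converges strongly in \(L^{\Phi}(\Omega)\) by \eqref{eq:compact-volume-embedding}. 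Lemma~\ref{lem:nonlinear-modular-continuity} then gives convergence of \(\Phi(\lvert v_{n}\rvert)\) in \(L^{1}(\Omega)\). The inequality \(\mathcal{M}_{0}'(v)[v]=\beta_{\rho}\int_{\Omega}\phi(\lvert v\rvert)\lvert v\rvert\,dx\geq p^{-}\mathcal{M}_{0}(v)\) is \eqref{eq:Phi-indices}. Nonemptiness of \(\{\mathcal{M}_{0}=\alpha\}\) follows from \eqref{eq:normalization-scaling} applied to any nonzero test function. Lemma~\ref{lem:constrained-minimization} then yields a minimizer, and for every minimizer \(u\) a unique multiplier
\[
\lambda_{0}[u]=\mathcal{E}_{0}'(u)[u]/\mathcal{M}_{0}'(u)[u].
\]
With this multiplier \(u\) satisfies the homogenized variational equation \eqref{eq:intro-limit-equation}.

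\emph{Nonnegativity.} If \(u\) is a minimizer, then \(\lvert u\rvert\in W_{0}^{1}L^{\Phi}(\Omega)\) and \(\nabla\lvert u\rvert=\operatorname{sgn}(u)\nabla u\) almost everywhere. Since \(F_{\mathrm{hom}}\) is even (Proposition~\ref{prop:cell-minimizer}), \(\mathcal{E}_{0}(\lvert u\rvert)=\mathcal{E}_{0}(u)\), and clearly \(\mathcal{M}_{0}(\lvert u\rvert)=\mathcal{M}_{0}(u)=\alpha\). Hence \(\lvert u\rvert\) is a nonnegative minimizer.

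\emph{Positivity of the multiplier.} This is the step needing the most care, though it is mild. The denominator is at least \(p^{-}\alpha>0\), so it suffices to bound the numerator from below. Here I would not rely on \eqref{eq:A-pairing}, which concerns \(A\) and not \(A_{\mathrm{hom}}\). Instead, convexity of \(F_{\mathrm{hom}}\) with \(F_{\mathrm{hom}}(0)=0\) gives
\[
A_{\mathrm{hom}}(\xi)\cdot\xi\geq F_{\mathrm{hom}}(\xi)-F_{\mathrm{hom}}(0)=F_{\mathrm{hom}}(\xi),
\]
so the numerator is at least \(\mathcal{E}_{0}(u)\). Since \(\mathcal{M}_{0}(u)=\alpha>0\), \(u\neq0\), so \(\nabla u\not\equiv0\) by \eqref{eq:Orlicz-Poincare}. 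The lower bound in \eqref{eq:Fhom-growth} then gives \(\mathcal{E}_{0}(u)>0\), and therefore \(\lambda_{0}[u]>0\).
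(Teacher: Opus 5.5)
Your proposal is correct and follows the paper's proof essentially step for step: apply Lemma~\ref{lem:constrained-minimization} on \(W_{0}^{1}L^{\Phi}(\Omega)\), use evenness of \(F_{\mathrm{hom}}\) to replace a minimizer by its absolute value, and obtain positivity of the multiplier from \(A_{\mathrm{hom}}(\xi)\cdot\xi\geq F_{\mathrm{hom}}(\xi)\geq c\Phi(\lvert\xi\rvert)\) together with \(u\neq0\). The only differences are details you make explicit and the paper leaves implicit: reflexivity, nonemptiness of the constraint set, and the formula for \(\mathcal{M}_{0}'\), which comes from a scalar instance of Lemma~\ref{lem:nonlinear-modular-continuity}(b).
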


\begin{proof}
Apply Lemma~\ref{lem:constrained-minimization} with \(X=W_{0}^{1}L^{\Phi}(\Omega)\).  Sublevel sets of \(\mathcal{E}_{0}\) are bounded by \eqref{eq:Fhom-growth} and \eqref{eq:Orlicz-Poincare}; \(\mathcal{E}_{0}\) is \(C^{1}\) by \eqref{eq:homogenized-energy-C1}; \(\mathcal{M}_{0}\) is \(C^{1}\) and sequentially weakly continuous by \eqref{eq:compact-volume-embedding} and Lemma~\ref{lem:nonlinear-modular-continuity}; and \(\mathcal{M}_{0}'(v)[v]\geq p^{-}\mathcal{M}_{0}(v)\) by \eqref{eq:Phi-indices}.  Since \(F_{\mathrm{hom}}\) is convex with \(F_{\mathrm{hom}}(0)=0\), \(A_{\mathrm{hom}}(\xi)\cdot\xi\geq F_{\mathrm{hom}}(\xi)\geq c\Phi(\lvert\xi\rvert)\); as \(u_{0,\alpha}\neq0\), the numerator \(\mathcal{E}_{0}'(u_{0,\alpha})[u_{0,\alpha}]\) in \eqref{eq:abstract-multiplier} is positive, and so is \(\lambda_{1,0}(\alpha)\).  Evenness of \(F_{\mathrm{hom}}\) permits replacing a minimizer by its absolute value.
\end{proof}

\begin{proposition}
\label{prop:recovery}
For every \(v\in W_{0}^{1}L^{\Phi}(\Omega)\), there are \(v_{\varepsilon}\in W_{0}^{1}L^{\Phi}(\Omega)\), \(\varepsilon\in(0,1)\), bounded in \(W^{1}L^{\Phi}(\Omega)\) for small \(\varepsilon\), such that \(v_{\varepsilon}\to v\) in \(L^{\Phi}(\Omega)\), \(v_{\varepsilon}|_{\Omega_{\varepsilon}}\in X_{\varepsilon}\), and
\begin{equation}
\mathcal{M}_{\varepsilon}(v_{\varepsilon})
\longrightarrow
\mathcal{M}_{0}(v),
\qquad
\limsup_{\varepsilon\to0}
\mathcal{E}_{\varepsilon}(v_{\varepsilon})
\leq
\mathcal{E}_{0}(v).
\label{eq:recovery-energy}
\end{equation}
If \(\mathcal{M}_{0}(v)=\alpha>0\), the sequence can be chosen with
\(\mathcal{M}_{\varepsilon}(v_{\varepsilon})=\alpha\)
for every sufficiently small \(\varepsilon\).
\end{proposition}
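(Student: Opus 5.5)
The plan is to build the recovery sequence from the usual two-scale corrector ansatz, first for smooth data, and then to reach a general \(v\) by density and a diagonal argument. The constraint is handled at the end by dilation.

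\emph{Smooth case.} Let \(v\in C_c^\infty(\Omega)\). By Proposition~\ref{prop:cell-minimizer} the cell minimizer \(w_\xi\) exists, but \(x\mapsto w_{\nabla v(x)}\) need not be regular. We therefore replace it by a finite sum \(\sum_j\theta_j(x)\,w^{(j)}(y)\). Here \(\theta_j\in C_c^\infty(\Omega)\) is a partition of unity subordinate to small balls on which \(\nabla v\) oscillates by less than \(\delta\). Each \(w^{(j)}\) is a smooth periodic approximation, which exists by density, of \(P w_{\xi_j}\) with \(\xi_j=\nabla v(x_j)\), where \(P\) is the extension of Lemma~\ref{lem:cell-extension-trace}. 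We then set
\[
v_\varepsilon(x)=v(x)+\varepsilon\sum_j\theta_j(x)\,w^{(j)}(x/\varepsilon).
\]
This function lies in \(W_0^1L^\Phi(\Omega)\), and \(v_\varepsilon\to v\) uniformly. Its gradient is \(\nabla v+\sum_j\theta_j\nabla_yw^{(j)}(x/\varepsilon)+O(\varepsilon)\), which is uniformly bounded, so the family is bounded in \(W^1L^\Phi(\Omega)\). The mass convergence \(\mathcal M_\varepsilon(v_\varepsilon)\to\mathcal M_0(v)\) follows directly from Proposition~\ref{prop:nonlinear-surface-compactness} with \(U_\varepsilon=v_\varepsilon\rightharpoonup v\).

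\emph{Energy of the smooth sequence.} Write \(\chi_\varepsilon F(x/\varepsilon,\nabla v_\varepsilon)\). Outside the layer \(R_\varepsilon\), which has vanishing measure by \eqref{eq:boundary-layer-small}, \(\chi_\varepsilon\) coincides with \(\chi_{Y^*}(x/\varepsilon)\). Since \(F\) is continuous on \(\overline Y\times\mathbb R^N\), the mean-value property for the admissible oscillating function \(\chi_{Y^*}(y)F(y,\nabla v(x)+\sum_j\theta_j\nabla_yw^{(j)}(y))\) gives
\[
\lim_{\varepsilon\to0}\mathcal E_\varepsilon(v_\varepsilon)=\int_\Omega\int_{Y^*}F\Bigl(y,\nabla v+\sum_j\theta_j\nabla_yw^{(j)}\Bigr)\,dy\,dx.
\]
The \(O(\varepsilon)\) term is removed by the uniform continuity of \(F\) on compact sets. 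By convexity, the right-hand side is at most \(\sum_j\int\theta_j\int_{Y^*}F(y,\nabla v+\nabla_yw^{(j)})\). Letting the approximation of \(w_{\xi_j}\) improve, and using Lemma~\ref{lem:nonlinear-modular-continuity}(b), the continuity of \(F_{\mathrm{hom}}\), and \(\delta\to0\), this tends to \(\int_\Omega F_{\mathrm{hom}}(\nabla v)\,dx\). To make the step from \(\xi_j\) to \(\nabla v(x)\) rigorous, I would show that \(\xi\mapsto\int_{Y^*}F(y,\xi+\nabla_yw_{\eta})\) is continuous in \(\xi\) uniformly for \(\eta\) in a bounded set. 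This follows from the local uniform continuity of \(F\) and the bound \(\int\Phi(|\nabla_yw_\eta|)\le C\Phi(|\eta|)\) from Lemma~\ref{lem:affine-periodic-coercivity}.

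\emph{General \(v\) and normalization.} For \(v\in W_0^1L^\Phi(\Omega)\), choose \(v^n\in C_c^\infty\) with \(v^n\to v\) in \(W^1L^\Phi\). Then \(\mathcal E_0(v^n)\to\mathcal E_0(v)\) by Lemma~\ref{lem:nonlinear-modular-continuity}(b) with \(\mathcal G=F_{\mathrm{hom}}\), and \(\mathcal M_0(v^n)\to\mathcal M_0(v)\). A diagonal choice \(n(\varepsilon)\to\infty\) yields the required \(v_\varepsilon\). These are still bounded in \(W^1L^\Phi\), converge to \(v\) in \(L^\Phi\), and satisfy \eqref{eq:recovery-energy}. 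If \(\mathcal M_0(v)=\alpha\), we have \(\mathcal M_\varepsilon(v_\varepsilon)\to\alpha>0\). By \eqref{eq:normalization-scaling} there are \(t_\varepsilon\to1\) with \(\mathcal M_\varepsilon(t_\varepsilon v_\varepsilon)=\alpha\). The map \(t\mapsto\mathcal E_\varepsilon(tv_\varepsilon)\) is convex, and its values for \(t\in[1/2,2]\) are bounded by \(C\int\Phi(|\nabla v_\varepsilon|)\). Hence it is equi-Lipschitz near \(t=1\), so \(\limsup\mathcal E_\varepsilon(t_\varepsilon v_\varepsilon)=\limsup\mathcal E_\varepsilon(v_\varepsilon)\le\mathcal E_0(v)\).

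The main obstacle is the energy step for the smooth case. There one must pass from the \(x\)-dependent optimal corrector to a piecewise-frozen smooth corrector and control the error uniformly. The uniform-in-\(\eta\) continuity estimate above is the point I would settle first.
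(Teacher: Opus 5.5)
Your proposal is correct in substance, and it takes a different route from the paper's.

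\textbf{How the routes differ.} The paper approximates \(v\) by continuous piecewise-affine functions \(v^m\). On each polyhedron the gradient is then exactly \(\xi_j\), and the smooth approximations \(w_{\xi_j,n}\) of the exact corrector are optimal there. The pieces are glued with cutoffs \(\eta_{\delta,j}\) that vanish near the interfaces, and the paper pays \(C_{m,n}\lvert S_\delta\rvert\) on the interface strips. It then makes a triple diagonal choice \(n(m),\delta(m),s_m\) while tracking the gradient bound \(C_0B_m+1\).

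You work instead with \(v\in C_c^\infty(\Omega)\) and a partition of unity. Overlaps are absorbed exactly by Jensen's inequality,
\(F\bigl(y,\sum_j\theta_j(\nabla v+\nabla_yw^{(j)})\bigr)\le\sum_j\theta_jF(y,\nabla v+\nabla_yw^{(j)})\), using \(F(y,0)=0\) where \(\sum_j\theta_j<1\). So no interface strips arise. The price is that \(\nabla v\) is frozen only up to \(\delta\). This forces you to prove continuity of \(\xi\mapsto\int_{Y^*}F(y,\xi+\nabla_yw_\eta)\,dy\), uniformly for bounded \(\eta\), which the piecewise-affine construction never needs.

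Your treatment of the constraint is neater than the paper's pointwise estimate \eqref{eq:normalization-energy-difference}. It uses the fact that a nonnegative convex function of \(t\), bounded by \(M\) on \([1/2,2]\), is \(4M\)-Lipschitz on \([3/4,3/2]\).

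\textbf{First step to fix: the uniform continuity estimate.} Local uniform continuity of \(F\), together with \(\int_{Y^*}\Phi(\lvert\nabla_yw_\eta\rvert)\,dy\le C\Phi(\lvert\eta\rvert)\), does not give the continuity you want. On the set where \(\lvert\nabla_yw_\eta\rvert\) is large you would need uniform integrability of \(\Phi(\lvert\nabla_yw_\eta\rvert)\) over bounded \(\eta\), and that bound alone does not provide it. Use the flux bound instead. Young's inequality and \eqref{eq:A-growth} give \(\lvert A(y,\zeta)\rvert\le\Phi(c_2)+\Phi(\lvert\zeta\rvert)\). Hence, for \(\lvert\xi-\xi'\rvert\le1\),
\[
\lvert F(y,\xi+G)-F(y,\xi'+G)\rvert\le\lvert\xi-\xi'\rvert\bigl(\Phi(c_2)+\Phi(\lvert G\rvert+\lvert\xi'\rvert+1)\bigr).
\]
Integrating with \(G=\nabla_yw_\eta\) and using \(\Delta_2\) gives a Lipschitz bound with constant \(C\bigl(1+\Phi(\lvert\eta\rvert)+\Phi(\lvert\xi'\rvert+1)\bigr)\).

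\textbf{Second step to fix: boundedness after diagonalization.} The boundedness of the diagonal family in \(W^1L^\Phi(\Omega)\) does not follow from the sup-norm bound of your smooth case. That bound degenerates, because \(\lVert\nabla_yw^{(j)}\rVert_\infty\) blows up as the approximations of \(Pw_{\xi_j}\) improve. Yet this boundedness is part of the statement, and it is also what lets you apply Proposition~\ref{prop:nonlinear-surface-compactness} to the diagonal sequence. You need a modular bound analogous to \eqref{eq:recovery-full-gradient}:
\[
\limsup_{\varepsilon\to0}\int_\Omega\Phi(\lvert\nabla v_\varepsilon\rvert)\,dx\le C\int_\Omega\Phi(\lvert\nabla v\rvert)\,dx+o(1),
\]
with a structural \(C\), as the approximation improves and \(\delta\to0\). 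It follows from the same Jensen step applied to \(\Phi(\lvert\cdot\rvert)\) (the holes included, via \(Pw_{\xi_j}\)) together with \eqref{eq:corrector-full-bound}. It then stays bounded along \(v^n\to v\), provided \(C\) is kept independent of the diagonal parameters, as the paper does with \(C_0\).
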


\begin{proof}
The construction is classical for convex periodic integral functionals; compare
\cite{TachagoGargiuloNnangZappale2021,TachagoNnang2012}.  We give the details required by the perforated domain and by the surface constraint.

For \(\xi\in\mathbb{R}^{N}\), extend \(w_{\xi}\) to \(Y\) by Lemma~\ref{lem:cell-extension-trace}; since \(Pw_{\xi}=w_{\xi}\) near \(\partial Y\), its periodic extension is locally in \(W^{1}L^{\Phi}\).  Periodic mollification and translation continuity under \(\Delta_{2}\) give \(w_{\xi,n}\in C_{\mathrm{per}}^{\infty}(Y)\) with \(w_{\xi,n}\to Pw_{\xi}\) in \(W^{1}L^{\Phi}(Y)\).  By \eqref{eq:cell-extension-one}, Lemma~\ref{lem:affine-periodic-coercivity}, \eqref{eq:F-growth} and \eqref{eq:Fhom-growth},
\begin{equation}
\int_{Y}\Phi\bigl(\lvert\xi+\nabla_{y}Pw_{\xi}\rvert\bigr)\,dy
\leq
C\Phi(\lvert\xi\rvert).
\label{eq:corrector-full-bound}
\end{equation}

Let \(Q\Subset\Omega\) be an open cube and suppose \(v(x)=\xi\cdot x+c\) in \(Q\).
Fix \(n\in\mathbb{N}\) temporarily and define
\[
v_{\varepsilon,n}(x)
=
v(x)
+
\varepsilon
w_{\xi,n}\left(\frac{x}{\varepsilon}\right).
\]
For every \(i=1,\ldots,N\),
\[
D_{x_{i}}v_{\varepsilon,n}
=
\xi_{i}
+
D_{y_{i}}w_{\xi,n}\left(\frac{x}{\varepsilon}\right).
\]
Let \(Q_{\varepsilon}^{\mathrm{int}}\) be the union of the complete cells
\(\varepsilon(k+Y)\) contained in \(Q\).  Cell periodicity gives the exact identity
\begin{equation}
\int_{Q_{\varepsilon}^{\mathrm{int}}\cap\Omega_{\varepsilon}}
F\left(
\frac{x}{\varepsilon},
\nabla v_{\varepsilon,n}(x)
\right)\,dx
=
\lvert Q_{\varepsilon}^{\mathrm{int}}\rvert
\int_{Y^{*}}
F\bigl(y,\xi+\nabla_{y}w_{\xi,n}(y)\bigr)\,dy.
\label{eq:affine-recovery-identity}
\end{equation}
Because \(w_{\xi,n}\) is smooth and fixed,
\[
\left|
Q\setminus Q_{\varepsilon}^{\mathrm{int}}
\right|
\longrightarrow0
\]
and \eqref{eq:F-growth} control the incomplete cells.  Thus
\begin{equation}
\lim_{\varepsilon\to0}
\int_{Q\cap\Omega_{\varepsilon}}
F\left(
\frac{x}{\varepsilon},
\nabla v_{\varepsilon,n}
\right)\,dx
=
\lvert Q\rvert
\int_{Y^{*}}
F\bigl(y,\xi+\nabla_{y}w_{\xi,n}\bigr)\,dy.
\label{eq:affine-recovery-limit}
\end{equation}
Letting \(n\to\infty\), the continuity and growth of \(F\), together with modular convergence, give the cell minimum
\[
\int_{Y^{*}}
F\bigl(y,\xi+\nabla_{y}w_{\xi}\bigr)\,dy
=
F_{\mathrm{hom}}(\xi).
\]

We next glue the construction for continuous piecewise-affine functions.
Assume that \(v\) is continuous and affine on each member of a finite polyhedral partition
\[
\{Q_{j}:j=1,\ldots,J\}
\]
of its support, where \(J\in\mathbb{N}\), and that
\[
\operatorname{dist}(\operatorname{supp}v,\partial\Omega)>0.
\]
Write
\[
\nabla v=\xi_{j}
\quad\text{in }Q_{j}.
\]
For a number \(\delta>0\), let \(S_{\delta}\) be the
\(2\delta\)-neighbourhood of all interfaces
\(\partial Q_{j}\) and of \(\partial\Omega\).  Choose smooth functions
\(\eta_{\delta,j}\) such that
\begin{align*}
0\leq\eta_{\delta,j}\leq1,
\qquad
\eta_{\delta,j}&=1
\quad\text{on }
Q_{j}\setminus S_{\delta},
\\
\operatorname{supp}\eta_{\delta,j}
&\subset Q_{j},
\\
\lvert\nabla\eta_{\delta,j}\rvert
&\leq\frac{C}{\delta}.
\end{align*}
For fixed smooth correctors \(w_{\xi_{j},n}\), define on \(\Omega\)
\begin{equation}
v_{\varepsilon,\delta,n}(x)
=
v(x)
+
\varepsilon
\sum_{j=1}^{J}
\eta_{\delta,j}(x)
w_{\xi_{j},n}\left(\frac{x}{\varepsilon}\right).
\label{eq:piecewise-recovery}
\end{equation}
The function vanishes near \(\partial\Omega\); its restriction to
\(\Omega_{\varepsilon}\) belongs to \(X_{\varepsilon}\).  Its gradient is
\begin{equation}
\nabla v_{\varepsilon,\delta,n}
=
\nabla v
+
\sum_{j=1}^{J}
\eta_{\delta,j}
\nabla_{y}w_{\xi_{j},n}\left(\frac{x}{\varepsilon}\right)
\\
+
\varepsilon
\sum_{j=1}^{J}
w_{\xi_{j},n}\left(\frac{x}{\varepsilon}\right)
\nabla\eta_{\delta,j}.
\label{eq:piecewise-recovery-gradient}
\end{equation}
For fixed \(\delta\) and \(n\), the last sum tends uniformly to zero as
\(\varepsilon\to0\).  Outside \(S_{\delta}\), formula
\eqref{eq:affine-recovery-limit} applies on every
\(Q_{j}\setminus S_{\delta}\).  Inside \(S_{\delta}\), all correctors are smooth and bounded, and
\eqref{eq:F-growth} gives
\[
\limsup_{\varepsilon\to0}
\int_{S_{\delta}\cap\Omega_{\varepsilon}}
F\left(
\frac{x}{\varepsilon},
\nabla v_{\varepsilon,\delta,n}
\right)\,dx
\leq
C_{v,n}\lvert S_{\delta}\rvert.
\]
Since the partition is finite,
\(\lvert S_{\delta}\rvert\to0\) as \(\delta\to0\).  Consequently,
\begin{equation}
\limsup_{n\to\infty}
\limsup_{\delta\to0}
\limsup_{\varepsilon\to0}
\mathcal{E}_{\varepsilon}
\left(
v_{\varepsilon,\delta,n}|_{\Omega_{\varepsilon}}
\right)
\leq
\sum_{j=1}^{J}
\lvert Q_{j}\rvert F_{\mathrm{hom}}(\xi_{j})
=
\int_{\Omega}F_{\mathrm{hom}}(\nabla v)\,dx.
\label{eq:piecewise-energy-upper}
\end{equation}

The functions \(v_{\varepsilon,\delta,n}\) belong to \(W_{0}^{1}L^{\Phi}(\Omega)\) and converge to \(v\) in \(L^{\Phi}(\Omega)\) as \(\varepsilon\to0\), since the perturbation has amplitude \(O(\varepsilon)\).  Writing \(\xi_{j}+\eta_{\delta,j}\nabla_{y}w=(1-\eta_{\delta,j})\xi_{j}+\eta_{\delta,j}(\xi_{j}+\nabla_{y}w)\), convexity, periodicity and \eqref{eq:corrector-full-bound} give
\begin{equation}
\limsup_{n\to\infty}\limsup_{\delta\to0}\limsup_{\varepsilon\to0}
\int_{\Omega}\Phi\bigl(\lvert\nabla v_{\varepsilon,\delta,n}\rvert\bigr)\,dx
\leq
C\int_{\Omega}\Phi(\lvert\nabla v\rvert)\,dx.
\label{eq:recovery-full-gradient}
\end{equation}

For general \(v\in W_{0}^{1}L^{\Phi}(\Omega)\), choose continuous
piecewise-affine \(v^m\) with compact support such that
\(v^m\to v\) in \(W_0^1L^\Phi(\Omega)\) (smooth density
\cite{Gossez1982} and piecewise-affine interpolation). By the growth
of \(F_{\mathrm{hom}}\) and Orlicz modular continuity,
\(\mathcal E_0(v^m)\to\mathcal E_0(v)\). Write \(\xi_j^m\) for the
gradients of \(v^m\) on the partition \(\{Q_j^m:j=1,\ldots,J_m\}\), and put
\(B_m=\int_\Omega\Phi(|\nabla v^m|)\,dx\). In view of the strong
periodic corrector approximation and \eqref{eq:corrector-full-bound},
for each \(m\) we can first fix \(n(m)\geq m\) so that
\begin{align*}
\sum_{j=1}^{J_m}|Q_j^m|\int_{Y^*}F(y,\xi_j^m+\nabla_yw_{\xi_j^m,n(m)})\,dy
&\leq\mathcal E_0(v^m)+\frac{1}{4m},\\
\sum_{j=1}^{J_m}|Q_j^m|\int_Y\Phi(|\xi_j^m+\nabla_yw_{\xi_j^m,n(m)}|)\,dy
&\leq C_0B_m+\frac{1}{4m},
\end{align*}
where \(C_0\) depends only on the structural constants, not on \(m\).  Indeed, for each fixed \(\xi\), the convergence \(w_{\xi,n}\to Pw_{\xi}\) in \(W^{1}L^{\Phi}(Y)\) and Lemma~\ref{lem:nonlinear-modular-continuity} give \(\int_Y\Phi(|\xi+\nabla_yw_{\xi,n}|)\,dy\to\int_Y\Phi(|\xi+\nabla_yPw_{\xi}|)\,dy\leq C\Phi(|\xi|)\) by \eqref{eq:corrector-full-bound}; summing over the finitely many \(\xi_j^m\) gives the second inequality with \(C_0=C\), the constant of \eqref{eq:corrector-full-bound}, and the first one follows in the same way from the cell minimality of \(w_{\xi_j^m}\).
With these finitely many smooth correctors fixed, the cutoff-gradient
term satisfies, for every fixed \(m,n,\delta\),
\[
\left\|\varepsilon\sum_{j=1}^{J_m}
w_{\xi_j^m,n}(\cdot/\varepsilon)\nabla\eta_{\delta,j}
\right\|_{L^\infty(\Omega)}
\leq\varepsilon C_{m,n,\delta}\longrightarrow0.
\]
The other terms in \eqref{eq:piecewise-recovery-gradient} are uniformly
bounded when \(m,n\) are fixed. By the growth bounds and continuity of
\(F\) and \(\Phi\) on bounded gradient ranges, this perturbation
changes both integrals by \(o_{\varepsilon\to0}(1)\) with
\(m,n,\delta\) fixed. Periodic averaging off the interface strips,
convexity for \(0\leq\eta_{\delta,j}\leq1\), and the bound on the strips
give
\begin{align*}
\limsup_{\varepsilon\to0}\mathcal E_\varepsilon
 (v^m_{\varepsilon,\delta,n})
&\leq\sum_{j=1}^{J_m}|Q_j^m|
 \int_{Y^*}F(y,\xi_j^m+\nabla_yw_{\xi_j^m,n})\,dy
 +C_{m,n}|S_\delta|,\\
\limsup_{\varepsilon\to0}\int_\Omega
 \Phi(|\nabla v^m_{\varepsilon,\delta,n}|)\,dx
&\leq\sum_{j=1}^{J_m}|Q_j^m|
 \int_Y\Phi(|\xi_j^m+\nabla_yw_{\xi_j^m,n}|)\,dy
 +C_{m,n}|S_\delta|.
\end{align*}
Here \(C_{m,n}\) is independent of \(\delta\); indeed the strip
integrands without the vanishing cutoff-gradient term are bounded by
the fixed smooth correctors and the finitely many gradients \(\xi_j^m\).
After fixing \(n=n(m)\), choose \(0<\delta(m)<1/m\) so that both strip
errors are below \(1/(4m)\). Finally choose
\(0<s_m<\min\{s_{m-1}/2,1/m\}\), with \(s_0=1\), so that, for every
\(0<\varepsilon<s_m\), the periodic averaging errors, the
cutoff-gradient error and the \(L^\Phi\)-distance to \(v^m\) are at
most \(1/(4m)\). Such a threshold exists for each fixed
\(m,n(m),\delta(m)\), and the constant \(C_{m,n(m),\delta(m)}\) may
depend on \(m\). Thus, putting
\(v_\varepsilon^m=v^m_{\varepsilon,\delta(m),n(m)}\), the same
\(s_m\) ensures all three inequalities
\begin{equation}
\begin{aligned}
\mathcal E_\varepsilon(v_\varepsilon^m)
&\leq\mathcal E_0(v^m)+\frac1m,\\
\|v_\varepsilon^m-v^m\|_{\Phi,\Omega}&\leq\frac1m,\\
\int_\Omega\Phi(|\nabla v_\varepsilon^m|)\,dx
&\leq C_0B_m+1.
\end{aligned}
\label{eq:recovery-diagonal-estimates}
\end{equation}
The constant \(C_0\) is independent of all diagonal choices:
it comes from the cell bound before \(n(m)\) and \(\delta(m)\) are
selected. Since \(v^m\to v\) in \(W_0^1L^\Phi(\Omega)\), the sequence
\((B_m)\) is bounded by \(\Delta_2\). For
\(s_{m+1}\leq\varepsilon<s_{m}\), set
\(v_\varepsilon=v_\varepsilon^m\). Then
\(v_\varepsilon\to v\) in \(L^\Phi(\Omega)\), and
\eqref{eq:recovery-diagonal-estimates}, with the Orlicz Poincar\'e
inequality, gives a bound in \(W_0^1L^\Phi(\Omega)\) independent of
\(\varepsilon\). Consequently \(v_\varepsilon\rightharpoonup v\)
in that space, and
\(\limsup_{\varepsilon\to0}\mathcal E_\varepsilon(v_\varepsilon)
\leq\mathcal E_0(v)\).
  Proposition~\ref{prop:nonlinear-surface-compactness}, applied with \(U_{\varepsilon}=v_{\varepsilon}\), gives \(\mathcal{M}_{\varepsilon}(v_{\varepsilon})\to\mathcal{M}_{0}(v)\).  The energy bound also gives
\begin{equation}
\sup_{0<\varepsilon<1}\int_{\Omega_{\varepsilon}}\Phi(\lvert\nabla v_{\varepsilon}\rvert)\,dx<\infty.
\label{eq:recovery-uniform-gradient}
\end{equation}

If \(\mathcal{M}_{0}(v)=\alpha>0\), let \(t_{\varepsilon}\) satisfy \(\mathcal{M}_{\varepsilon}(t_{\varepsilon}v_{\varepsilon})=\alpha\).  Since \(\mathcal{M}_{\varepsilon}(v_{\varepsilon})\to\alpha\), \eqref{eq:normalization-scaling} gives
\begin{equation}
t_{\varepsilon}\longrightarrow1.
\label{eq:normalization-factor-one}
\end{equation}
It remains to verify that the energy upper bound is preserved.  Put
\(\xi_{\varepsilon}=\nabla v_{\varepsilon}\) and
\(\theta_{\varepsilon,s}=1+s(t_{\varepsilon}-1)\).  The fundamental theorem of calculus gives, for almost every \(x\in\Omega_{\varepsilon}\),
\begin{equation}
\left|F\!\left(\frac{x}{\varepsilon},t_{\varepsilon}\xi_{\varepsilon}\right)-F\!\left(\frac{x}{\varepsilon},\xi_{\varepsilon}\right)\right|
\leq |t_{\varepsilon}-1|\,|\xi_{\varepsilon}|\int_{0}^{1}\left|A\!\left(\frac{x}{\varepsilon},\theta_{\varepsilon,s}\xi_{\varepsilon}\right)\right|\,ds.
\label{eq:normalization-energy-difference}
\end{equation}
By \eqref{eq:normalization-factor-one}, \(\theta_{\varepsilon,s}\in[1/2,2]\) for all small \(\varepsilon\) and every \(s\in[0,1]\).  Young's inequality, \eqref{eq:A-growth}, and \eqref{eq:fixed-scaling} then give, uniformly for \(\theta\in[1/2,2]\),
\[
|\xi|\,|A(y,\theta\xi)|
\leq \Phi(c_{2}|\xi|)+\widetilde\Phi\!\left(\frac{|A(y,\theta\xi)|}{c_{2}}\right)
\leq C\Phi(|\xi|).
\]
Integrating \eqref{eq:normalization-energy-difference} and using
\eqref{eq:recovery-uniform-gradient} therefore yields
\begin{equation}
\left|
\mathcal{E}_{\varepsilon}(t_{\varepsilon}v_{\varepsilon})
-
\mathcal{E}_{\varepsilon}(v_{\varepsilon})
\right|
\leq
C\lvert t_{\varepsilon}-1\rvert
\int_{\Omega_{\varepsilon}}
\Phi(\lvert\nabla v_{\varepsilon}\rvert)\,dx
\longrightarrow0.
\label{eq:normalized-recovery-energy-stability}
\end{equation}
Thus \eqref{eq:recovery-energy} is preserved, and replacing \(v_{\varepsilon}\) by \(t_{\varepsilon}v_{\varepsilon}\) completes the proof.
\end{proof}

\subsection{Passage to the limit and proof of the main results}
\label{subsec:proof-main}

The uniform estimates of Subsection~\ref{subsec:estimates} and the results of Subsection~\ref{subsec:macroscopic} are now combined.

\begin{proof}[Proof of Theorems~\ref{thm:intro-main} and~\ref{thm:intro-full}]
By Theorem~\ref{thm:uniform-estimates}, \(u_{\varepsilon,\alpha}\) is bounded in \(W^{1}L^{\Phi}(\Omega_{\varepsilon})\), and Proposition~\ref{prop:perforated-gradient-compactness} gives, after extraction,
\(u_{0,\alpha}\in W_{0}^{1}L^{\Phi}(\Omega)\) and
\(u_{1,\alpha}\in L^{1}(\Omega;W_{\#}^{1}L^{\Phi}(Y^{*}))\), with
\(\nabla_{y}u_{1,\alpha}\in L^{\Phi}(\Omega\times Y^{*})^{N}\), such that
\eqref{eq:intro-weak-u}--\eqref{eq:intro-gradient-convergence} hold.  Proposition~\ref{prop:nonlinear-surface-compactness} gives
\[
\alpha
=
\lim_{\varepsilon\to0}
\mathcal{M}_{\varepsilon}(u_{\varepsilon,\alpha})
=
\beta_{\rho}
\int_{\Omega}\Phi(\lvert u_{0,\alpha}\rvert)\,dx.
\]
Thus \(u_{0,\alpha}\) is admissible for the limit minimum.

The limit \(u_{0,\alpha}\) is nonnegative.  Indeed, put \(U_{\varepsilon}=P_{\varepsilon}u_{\varepsilon,\alpha}\); since \(u_{\varepsilon,\alpha}\geq0\), we have \(\chi_{\varepsilon}U_{\varepsilon}^{-}=0\), where \(t^{-}=\max\{-t,0\}\).  As \(t\mapsto t^{-}\) is \(1\)-Lipschitz, \eqref{eq:intro-strong-u} gives \(U_{\varepsilon}^{-}\to u_{0,\alpha}^{-}\) in \(L^{1}(\Omega)\), hence \(\int_{\Omega}\chi_{\varepsilon}u_{0,\alpha}^{-}\,dx\to0\).  The periodic averaging of \(\chi_{\varepsilon}\) on complete cells and density in \(L^{1}(\Omega)\) show that this integral also converges to \(|Y^{*}|\int_{\Omega}u_{0,\alpha}^{-}\,dx\); thus \(u_{0,\alpha}\geq0\) almost everywhere.

The multiplier bound \eqref{eq:uniform-lambda} yields, after a further extraction,
\begin{equation}
\lambda_{1,\varepsilon}(\alpha)
\longrightarrow
\lambda_{*}
\quad\text{for some }\lambda_{*}>0.
\label{eq:lambda-subsequence}
\end{equation}

We establish the two-scale lower bound for the energies. The extraction above may start from any sequence \(\varepsilon_j\to0\). In particular, to obtain the full-family lower bound for the minima, one may start from a sequence realizing \(\liminf_{\varepsilon\to0}\mu_{1,\varepsilon}(\alpha)\) and then carry out the same compactness extraction. The energy values along that extraction still converge to the original lower limit.
Set
\begin{equation}
G(x,y)
=
\nabla u_{0,\alpha}(x)
+\nabla_{y}u_{1,\alpha}(x,y)
\quad\text{in }\Omega\times Y^{*}.
\label{eq:limit-gradient-G}
\end{equation}
Let
\[
\Psi=(\Psi_{i})_{i=1}^{N}
\in
C_{c}^{\infty}
\bigl(\Omega;C_{\mathrm{per}}^{\infty}(Y)^{N}\bigr).
\]
Convexity of \(F(y,\cdot)\) gives, for almost every
\(x\in\Omega_{\varepsilon}\),
\begin{equation}
F\left(
\frac{x}{\varepsilon},
\nabla u_{\varepsilon,\alpha}
\right)
\geq
F\left(
\frac{x}{\varepsilon},
\Psi\left(x,\frac{x}{\varepsilon}\right)
\right)
+
\sum_{i=1}^{N}
A_{i}\left(
\frac{x}{\varepsilon},
\Psi\left(x,\frac{x}{\varepsilon}\right)
\right)
\left[
D_{x_{i}}u_{\varepsilon,\alpha}
-
\Psi_{i}\left(x,\frac{x}{\varepsilon}\right)
\right].
\label{eq:convex-supporting-inequality}
\end{equation}
Integrate over \(\Omega_{\varepsilon}\).  The function
\(x\mapsto\chi_{Y^{*}}(\cdot)F(\cdot,\Psi(x,\cdot))\) is continuous from \(\overline{\Omega}\) into the bounded periodic functions, and
\(\chi_{\varepsilon}\) differs from \(\chi_{Y^{*}}(x/\varepsilon)\) only on \(R_{\varepsilon}\); the mean-value property of periodic bounded functions
\cite[Chapter~2]{CioranescuDonato1999} therefore gives
\[
\int_{\Omega}\chi_{\varepsilon}F\Bigl(\frac{x}{\varepsilon},\Psi\Bigl(x,\frac{x}{\varepsilon}\Bigr)\Bigr)\,dx\longrightarrow\int_{\Omega}\int_{Y^{*}}F(y,\Psi(x,y))\,dy\,dx,
\]
and similarly for the term \(A\cdot\Psi\).  Since \(A_{i}(y,\Psi(x,y))\in C_{c}(\Omega;C_{\mathrm{per}}(Y))\) is an admissible test for \eqref{eq:intro-gradient-convergence}, passing to the limit gives
\begin{multline}
\liminf_{\varepsilon\to0}
\mathcal{E}_{\varepsilon}(u_{\varepsilon,\alpha})
\geq
\int_{\Omega}\int_{Y^{*}}
\biggl[
F(y,\Psi(x,y))
\\
+
\sum_{i=1}^{N}
A_{i}(y,\Psi(x,y))
\bigl(G_{i}(x,y)-\Psi_{i}(x,y)\bigr)
\biggr]\,dy\,dx.\qquad\qquad
\label{eq:preliminary-liminf}
\end{multline}

By Lemma~\ref{lem:smooth-periodic-density}, choose a sequence
\[
(\Psi_{n})_{n\in\mathbb{N}}
\subset
C_{c}^{\infty}\bigl(\Omega;C_{\mathrm{per}}^{\infty}(Y)^{N}\bigr)
\]
such that
\begin{equation}
\Psi_{n}\longrightarrow G
\quad\text{in }L^{\Phi}(\Omega\times Y^{*})^{N}.
\label{eq:liminf-smooth-approximation}
\end{equation}
Lemma~\ref{lem:nonlinear-modular-continuity} gives
\begin{align*}
F(y,\Psi_{n})&\longrightarrow F(y,G)
&&\text{in }L^{1}(\Omega\times Y^{*}),
\\
A(y,\Psi_{n})&\longrightarrow A(y,G)
&&\text{in }L^{\widetilde\Phi}(\Omega\times Y^{*})^{N}.
\end{align*}
In particular, \(A(y,\Psi_{n})\) is bounded in
\(L^{\widetilde\Phi}(\Omega\times Y^{*})^{N}\), and generalized H\"older gives
\[
\left|
\int_{\Omega}\int_{Y^{*}}
A(y,\Psi_{n})\cdot(G-\Psi_{n})\,dy\,dx
\right|
\leq
C
\|A(y,\Psi_{n})\|_{\widetilde\Phi}
\|G-\Psi_{n}\|_{\Phi}
\longrightarrow0.
\]
We may therefore let \(n\to\infty\) in
\eqref{eq:preliminary-liminf}.  Using the cell definition
\eqref{eq:intro-Fhom} with \(w=u_{1,\alpha}(x,\cdot)\), for almost every \(x\in\Omega\), we obtain
\begin{equation}
\liminf_{\varepsilon\to0}\mathcal E_{\varepsilon}(u_{\varepsilon,\alpha})
\geq \int_{\Omega}\!\int_{Y^{*}}F(y,G(x,y))\,dy\,dx
\geq \mathcal E_{0}(u_{0,\alpha}).
\label{eq:main-liminf}
\end{equation}

The recovery sequence gives the complementary upper bound for the entire family of minima.
For any \(v\in W_{0}^{1}L^{\Phi}(\Omega)\) with \(\mathcal M_{0}(v)=\alpha\), Proposition~\ref{prop:recovery} provides an exactly normalized recovery sequence and hence, by minimality,
\[
\limsup_{\varepsilon\to0}\mu_{1,\varepsilon}(\alpha)
\leq \mathcal E_{0}(v).
\]
Taking the infimum over normalized \(v\), and using the normalization of \(u_{0,\alpha}\) together with \eqref{eq:main-liminf}, gives
\[
\mu_{1,0}(\alpha)\leq \mathcal E_{0}(u_{0,\alpha})
\leq \liminf_{\varepsilon\to0}\mu_{1,\varepsilon}(\alpha)
\leq \limsup_{\varepsilon\to0}\mu_{1,\varepsilon}(\alpha)
\leq \mu_{1,0}(\alpha).
\]
Thus all inequalities are equalities,
\(\mu_{1,\varepsilon}(\alpha)\to\mu_{1,0}(\alpha)\), and
\(u_{0,\alpha}\) is a minimizer of the homogenized problem.

We then extract a two-scale flux limit and pass to both variational equations.
For every \(i=1,\ldots,N\), define the zero extension
\begin{equation}
q_{\varepsilon,i}(x)
=
\chi_{\varepsilon}(x)
A_{i}\left(
\frac{x}{\varepsilon},
\nabla u_{\varepsilon,\alpha}(x)
\right).
\label{eq:micro-flux}
\end{equation}
The growth \eqref{eq:A-growth} and the gradient estimate
\eqref{eq:uniform-gradient} bound
\(q_{\varepsilon,i}\) in
\(L^{\widetilde{\Phi}}(\Omega)\).  Applying
Theorem~\ref{thm:volume-two-scale-compactness} with
\(\widetilde{\Phi}\) in place of \(\Phi\), we obtain
\[
q_{i}\in L^{\widetilde{\Phi}}(\Omega\times Y^{*})
\]
such that
\begin{equation}
q_{\varepsilon,i}
\overset{2s}{\rightharpoonup}
\chi_{Y^{*}}q_{i}
\quad\text{for every }i=1,\ldots,N;
\label{eq:unidentified-flux-limit}
\end{equation}
the factor \(\chi_{Y^{*}}\) follows from Lemma~\ref{lem:solid-phase-restriction}, since \(q_{\varepsilon,i}=\chi_{\varepsilon}q_{\varepsilon,i}\).

Take \(v\in C_{c}^{\infty}(\Omega)\) in the microscopic Euler equation \eqref{eq:intro-weak}.  The volume side converges by
\eqref{eq:unidentified-flux-limit}.  The surface side converges by
\eqref{eq:lambda-subsequence} and
\eqref{eq:surface-derivative-limit}.  Hence, for every \(v\in C_{c}^{\infty}(\Omega)\),
\begin{equation}
\int_{\Omega}\int_{Y^{*}}
\sum_{i=1}^{N}
q_{i}(x,y)D_{x_{i}}v(x)\,dy\,dx
=
\lambda_{*}\beta_{\rho}
\int_{\Omega}
\phi(\lvert u_{0,\alpha}\rvert)
\operatorname{sgn}(u_{0,\alpha})v\,dx.
\label{eq:unidentified-macro-equation}
\end{equation}

Next take
\[
f\in C_{c}^{\infty}(\Omega),
\qquad
\zeta\in C_{\mathrm{per}}^{\infty}(Y),
\]
and use
\[
v_{\varepsilon}(x)
=
\varepsilon
f(x)
\zeta\left(\frac{x}{\varepsilon}\right)
\]
as a microscopic test.  For every \(i=1,\ldots,N\),
\[
D_{x_{i}}v_{\varepsilon}
=
\varepsilon
D_{x_{i}}f(x)
\zeta\left(\frac{x}{\varepsilon}\right)
+
f(x)
D_{y_{i}}\zeta\left(\frac{x}{\varepsilon}\right).
\]
The boundary term tends to zero.  Indeed, after writing it with
\(d\vartheta_{\varepsilon}\), it contains an additional factor
\(\varepsilon\), while
\eqref{eq:uniform-boundary-dual} and generalized H\"older give a uniform bound for the remaining product.  Passing to the limit in the volume term yields, for every \(f\in C_{c}^{\infty}(\Omega)\) and every \(\zeta\in C_{\mathrm{per}}^{\infty}(Y)\),
\begin{equation}
\int_{\Omega}\int_{Y^{*}}
\sum_{i=1}^{N}
q_{i}(x,y)
f(x)D_{y_{i}}\zeta(y)\,dy\,dx
=0.
\label{eq:unidentified-cell-equation}
\end{equation}
The restrictions of smooth periodic functions are dense in \(W_{\#}^{1}L^{\Phi}(Y^*)\) in the gradient norm, after subtracting their cell means: extend an element to \(Y\) using Lemma~\ref{lem:cell-extension-trace}, mollify periodically in \(W^1L^\Phi(Y)\), restrict to \(Y^*\), and subtract its mean on \(Y^*\). By separability choose a countable dense family \((\zeta_j)_{j\geq1}\) of these smooth restrictions. For each \(j\), the fundamental lemma in \(x\) applied to \eqref{eq:unidentified-cell-equation} shows that
\[
\int_{Y^*}q(x,y)\cdot\nabla_y\zeta_j(y)\,dy=0
\quad\text{for almost every }x\in\Omega.
\]
Intersect these countably many full-measure sets with the set on which \(q(x,\cdot)\in L^{\widetilde\Phi}(Y^*)^N\). For every \(x\) in the resulting full-measure set, Orlicz--H\"older continuity and density yield, for every \(\zeta\in W_{\#}^{1}L^{\Phi}(Y^*)\),
\begin{equation}
\int_{Y^*}q(x,y)\cdot\nabla_y\zeta(y)\,dy=0.
\label{eq:pointwise-unidentified-cell}
\end{equation}
The exceptional set is therefore independent of \(\zeta\).

The Euler identity yields convergence of the flux--gradient duality product.
By \eqref{eq:intro-multiplier-ratio}, the left-hand side of \eqref{eq:duality-product-limit} below equals \(\lambda_{1,\varepsilon}(\alpha)\,\varepsilon\int_{\Gamma_{\varepsilon}}\rho(x/\varepsilon)\phi(\lvert u_{\varepsilon,\alpha}\rvert)\lvert u_{\varepsilon,\alpha}\rvert\,d\sigma_{x}\).  By \eqref{eq:lambda-subsequence} and
\eqref{eq:surface-H-limit}, this quantity converges to
\begin{equation}
\lambda_{*}\beta_{\rho}
\int_{\Omega}
\phi(\lvert u_{0,\alpha}\rvert)
\lvert u_{0,\alpha}\rvert\,dx.
\label{eq:duality-boundary-limit}
\end{equation}
Equation \eqref{eq:unidentified-macro-equation} extends from smooth functions to
\(W_{0}^{1}L^{\Phi}(\Omega)\), because
\[
\int_{Y^{*}}q(x,y)\,dy
\in L^{\widetilde{\Phi}}(\Omega)^{N}
\]
and
\[
\phi(\lvert u_{0,\alpha}\rvert)
\operatorname{sgn}(u_{0,\alpha})
\in L^{\widetilde{\Phi}}(\Omega).
\]
Using \(v=u_{0,\alpha}\) shows that
\eqref{eq:duality-boundary-limit} equals
\[
\int_{\Omega}\int_{Y^{*}}
\sum_{i=1}^{N}
q_{i}(x,y)D_{x_{i}}u_{0,\alpha}(x)\,dy\,dx.
\]
For almost every \(x\), \(u_{1,\alpha}(x,\cdot)\in W_{\#}^{1}L^{\Phi}(Y^*)\), so \eqref{eq:pointwise-unidentified-cell} applies to this choice. The resulting integrand is in \(L^1(\Omega\times Y^*)\) by Orlicz--H\"older and the two-scale bounds on \(q\) and \(\nabla_yu_{1,\alpha}\). Integrating over \(x\) gives
\[
\int_{\Omega}\int_{Y^{*}}
\sum_{i=1}^{N}
q_{i}(x,y)D_{y_{i}}u_{1,\alpha}(x,y)\,dy\,dx
=0.
\]
Combining the last three identities gives
\begin{equation}
\lim_{\varepsilon\to0}
\int_{\Omega_{\varepsilon}}
\sum_{i=1}^{N}
A_{i}\left(
\frac{x}{\varepsilon},
\nabla u_{\varepsilon,\alpha}
\right)
D_{x_{i}}u_{\varepsilon,\alpha}\,dx
=
\int_{\Omega}\int_{Y^{*}}
\sum_{i=1}^{N}
q_{i}(x,y)G_{i}(x,y)\,dy\,dx.
\label{eq:duality-product-limit}
\end{equation}

The Minty argument identifies the limiting nonlinear flux.
For every \(\Psi\in C_{c}^{\infty}(\Omega;C_{\mathrm{per}}^{\infty}(Y)^{N})\), monotonicity gives
\begin{equation}
\int_{\Omega_{\varepsilon}}
\sum_{i=1}^{N}
\left[
A_{i}\left(
\frac{x}{\varepsilon},
\nabla u_{\varepsilon,\alpha}
\right)
-
A_{i}\left(
\frac{x}{\varepsilon},
\Psi\left(x,\frac{x}{\varepsilon}\right)
\right)
\right]
\times
\left[
D_{x_{i}}u_{\varepsilon,\alpha}
-
\Psi_{i}\left(x,\frac{x}{\varepsilon}\right)
\right]\,dx
\geq0.
\label{eq:micro-Minty}
\end{equation}
Use \eqref{eq:duality-product-limit} for the only product of two weakly convergent sequences.  All other terms pass to the two-scale limit by the admissibility and continuity assumptions.  We obtain, for every \(\Psi\in C_{c}^{\infty}(\Omega;C_{\mathrm{per}}^{\infty}(Y)^{N})\),
\begin{equation}
\int_{\Omega}\int_{Y^{*}}
\sum_{i=1}^{N}
\bigl[q_{i}-A_{i}(y,\Psi)\bigr]
\bigl[G_{i}-\Psi_{i}\bigr]\,dy\,dx
\geq0.
\label{eq:limit-Minty}
\end{equation}

By Lemma~\ref{lem:smooth-periodic-density}, every \(V\in L^{\Phi}(\Omega\times Y^{*})^{N}\) is the limit in \(L^{\Phi}(\Omega\times Y^{*})^{N}\) of functions \(\Psi_{n}\in C_{c}^{\infty}(\Omega;C_{\mathrm{per}}^{\infty}(Y)^{N})\), and \(A(y,\Psi_{n})\to A(y,V)\) in \(L^{\widetilde\Phi}(\Omega\times Y^{*})^{N}\) by Lemma~\ref{lem:nonlinear-modular-continuity}.  Since \(q\in L^{\widetilde\Phi}(\Omega\times Y^{*})^{N}\) and \(G\in L^{\Phi}(\Omega\times Y^{*})^{N}\), generalized H\"older's inequality extends \eqref{eq:limit-Minty} to every \(V\in L^{\Phi}(\Omega\times Y^{*})^{N}\).  For \(Z\in L^{\Phi}(\Omega\times Y^{*})^{N}\) and \(t>0\), the choice \(V=G-tZ\) and division by \(t\) give
\[
\int_{\Omega}\int_{Y^{*}}\bigl[q-A(y,G-tZ)\bigr]\cdot Z\,dy\,dx\geq0.
\]
Letting \(t\downarrow0\), with \(A(y,G-tZ)\to A(y,G)\) in \(L^{\widetilde\Phi}(\Omega\times Y^{*})^{N}\) by Lemma~\ref{lem:nonlinear-modular-continuity}, and then replacing \(Z\) by \(-Z\), we obtain \(\int_{\Omega}\int_{Y^{*}}[q-A(y,G)]\cdot Z\,dy\,dx=0\) for every such \(Z\).  Consequently,
\begin{equation}
q_{i}(x,y)
=
A_{i}\bigl(y,G(x,y)\bigr)
\quad\text{for almost every }(x,y)
\text{ and every }i=1,\ldots,N.
\label{eq:identified-flux}
\end{equation}
This proves the flux convergence
\eqref{eq:intro-flux-convergence}.

The cell equation identifies the corrector and completes the macroscopic limit.
Substituting \eqref{eq:identified-flux} into
\eqref{eq:pointwise-unidentified-cell} gives, for almost every
\(x\in\Omega\) and every \(\zeta\in W_{\#}^{1}L^{\Phi}(Y^{*})\),
\begin{equation}
\int_{Y^{*}}
\sum_{i=1}^{N}
A_{i}\bigl(
y,
\nabla u_{0,\alpha}(x)
+\nabla_{y}u_{1,\alpha}(x,y)
\bigr)
D_{y_{i}}\zeta(y)\,dy
=0.
\label{eq:limit-cell-equation}
\end{equation}
Uniqueness in Proposition~\ref{prop:cell-minimizer} yields
\[
u_{1,\alpha}(x,\cdot)
=
w_{\nabla u_{0,\alpha}(x)}
\quad\text{for almost every }x\in\Omega.
\]
Average \eqref{eq:identified-flux} over \(Y^{*}\), use
\eqref{eq:Fhom-derivative}, and substitute into
\eqref{eq:unidentified-macro-equation}.  We obtain, for every \(v\in W_{0}^{1}L^{\Phi}(\Omega)\),
\[
\int_{\Omega}
\sum_{i=1}^{N}
A_{\mathrm{hom},i}(\nabla u_{0,\alpha})
D_{x_{i}}v\,dx
=
\lambda_{*}\beta_{\rho}
\int_{\Omega}
\phi(\lvert u_{0,\alpha}\rvert)
\operatorname{sgn}(u_{0,\alpha})v\,dx.
\]
Thus
\[
\lambda_{*}
=
\lambda_{1,0}(\alpha)
\]
for the limit minimizer \(u_{0,\alpha}\), proving
\eqref{eq:intro-lambda-convergence} along the extracted subsequence.

The convergence of the minima was obtained without extraction.  If the nonnegative normalized limit minimizer is unique, every subsequence has the same limit \(u_{0,\alpha}\), the same corrector and the same multiplier.  The usual subsequence principle then gives convergence of the entire family.  This completes the proof.
\end{proof}

\subsection{Benchmark models}
\label{subsec:benchmarks}

Throughout this subsection we take
\[
\tau\in C_{\mathrm{per}}(Y),\qquad 0<\tau_-\le \tau(y)\le \tau_+,
\qquad F(y,\xi)=\tau(y)\Phi(|\xi|).
\]
If \(\Phi\) is a strictly convex \(N\)-function, of class \(C^{1}\) on \([0,\infty)\) and \(C^{2}\) on \((0,\infty)\), and if, for every \(t>0\),
\[
1<\underline{p}\le \frac{t\phi(t)}{\Phi(t)}\le \overline{p}<\infty,
\qquad
\frac{t\phi'(t)}{\phi(t)}\le \overline{p}-1,
\]
then \((H_1)\) holds with \(p^-=\underline{p}\) and \(p^+=\kappa^+=\overline{p}\).  Moreover \(F(y,\cdot)\) is even, strictly convex and of class \(C^{1}\), with
\(\tau_-\Phi(|\xi|)\le F(y,\xi)\le \tau_+\Phi(|\xi|)\), so that \((H_2)\) holds; the flux is
\(A(y,\xi)=\tau(y)\phi(|\xi|)\xi/|\xi|\) for \(\xi\ne0\) and \(A(y,0)=0\).  For the radial examples below it is therefore enough to compute the two logarithmic derivatives of \(\Phi\).

\subsubsection{The homogeneous \(p\)-model}

Let \(\Phi(t)=t^p/p\), \(1<p<\infty\).  Then \(\phi(t)=t^{p-1}\),
\(A(y,\xi)=\tau(y)|\xi|^{p-2}\xi\), and
\(p^-=p^+=\kappa^+=p\).  Testing the Euler equation with
\(u_{\varepsilon,\alpha}\) gives
\(p\mu_{1,\varepsilon}(\alpha)=\lambda_{1,\varepsilon}(\alpha)\,p\alpha\), hence
\(\lambda_{1,\varepsilon}(\alpha)=\mu_{1,\varepsilon}(\alpha)/\alpha\).
Since \(\mathcal E_\varepsilon(tv)=t^p\mathcal E_\varepsilon(v)\) and
\(\mathcal M_\varepsilon(tv)=t^p\mathcal M_\varepsilon(v)\), rescaling
competitors between the levels \(1\) and \(\alpha\) gives
\[
\mu_{1,\varepsilon}(\alpha)=\alpha\mu_{1,\varepsilon}(1),
\qquad
\lambda_{1,\varepsilon}(\alpha)=\mu_{1,\varepsilon}(1).
\]
Thus the multiplier is independent of the normalization level.

\subsubsection{A nonhomogeneous double-power model}

Let
\(\Phi(t)=t^p/p+t^r/r\) with \(1<p<r<\infty\).  Then
\(\phi(t)=t^{p-1}+t^{r-1}\) and
\(A(y,\xi)=\tau(y)\bigl(|\xi|^{p-2}+|\xi|^{r-2}\bigr)\xi\).
The quotients
\[
\frac{t\phi(t)}{\Phi(t)}
=\frac{t^{p}+t^{r}}{t^{p}/p+t^{r}/r},
\qquad
\frac{t\phi'(t)}{\phi(t)}
=\frac{(p-1)t^{p-1}+(r-1)t^{r-1}}
{t^{p-1}+t^{r-1}}
\]
are weighted averages of \(\{p,r\}\) and \(\{p-1,r-1\}\), respectively.  Hence
\((H_1)\) holds with \(p^-=p\) and \(p^+=\kappa^+=r\).
In \eqref{eq:intro-multiplier-ratio}, the numerator becomes \(\int_{\Omega_\varepsilon}\tau(x/\varepsilon)(|\nabla u_{\varepsilon,\alpha}|^p+|\nabla u_{\varepsilon,\alpha}|^r)\,dx\) and the denominator \(\varepsilon\int_{\Gamma_\varepsilon}\rho(x/\varepsilon)(|u_{\varepsilon,\alpha}|^p+|u_{\varepsilon,\alpha}|^r)\,d\sigma_x\).  In general \(\lambda_{1,\varepsilon}(\alpha)\ne\mu_{1,\varepsilon}(\alpha)/\alpha\), because amplitude scaling changes the relative weights of the two powers.

\begin{remark}[Other radial laws]
The same weighted-average argument covers finite multi-power laws
\(\Phi(t)=\sum_{j=1}^{J} c_jt^{p_j}/p_j\), with
\(1<p_1<\cdots<p_J\) and \(c_j>0\); smooth transitions with
\(\phi(t)=t^{p-1}(1+t^\delta)^{(r-p)/\delta}\),
\(1<p<r\), \(\delta>0\); and Carreau-type laws with a positive quadratic part. Their indices
are obtained from the corresponding logarithmic derivatives; related
Carreau homogenization models appear in
\cite{AnguianoBonnivardSuarezGrau2025}.
\end{remark}

\subsubsection{A logarithmic perturbation}

Let \(p>1\) and
\(\phi(t)=t^{p-1}\log(e+t)\), \(\Phi(t)=\int_0^t\phi(s)\,ds\).  Then
\[
\frac{t\phi'(t)}{\phi(t)}
=p-1+\frac{t}{(e+t)\log(e+t)}\in[p-1,p],
\]
and integration of these differential inequalities gives
\(p\le t\phi(t)/\Phi(t)\le p+1\).  Thus \((H_1)\) holds with
\(p^-=p\) and \(p^+=\kappa^+=p+1\).  This
non-polynomial example shows that the theory covers growth laws beyond
finite combinations and smooth transitions of powers.

\begingroup
\hbadness=2000

\endgroup

\end{document}